\newtheorem{Thm}{Theorem}[section]
\newtheorem{Prop}[Thm]{Proposition}
\newtheorem{Cor}[Thm]{Corollary}
\newtheorem{Lem}[Thm]{Lemma}
\theoremstyle{definition}
\newtheorem*{definition}{Definition}
\newtheorem*{remark}{Remark}
\numberwithin{equation}{section}
\begin{document}

\newcommand{\Coim}{\mathrm{Coim}}
\newcommand{\Z}[0]{\mathbb{Z}}
\newcommand{\Q}[0]{\mathbb{Q}}
\newcommand{\F}[0]{\mathbb{F}}
\newcommand{\N}[0]{\mathbb{N}}
\renewcommand{\O}[0]{\mathcal{O}}
\newcommand{\p}[0]{\mathfrak{p}}
\newcommand{\m}[0]{\mathrm{m}}
\newcommand{\Tr}{\mathrm{Tr}}
\newcommand{\Hom}[0]{\mathrm{Hom}}
\newcommand{\Gal}[0]{\mathrm{Gal}}
\newcommand{\Res}[0]{\mathrm{Res}}
\newcommand{\id}{\mathrm{id}}
\newcommand{\cl}{\mathrm{cl}}
\newcommand{\mult}{\mathrm{mult}}
\newcommand{\adm}{\mathrm{adm}}
\newcommand{\tr}{\mathrm{tr}}
\newcommand{\pr}{\mathrm{pr}}
\newcommand{\Ker}{\mathrm{Ker}}
\newcommand{\ab}{\mathrm{ab}}
\newcommand{\sep}{\mathrm{sep}}
\newcommand{\triv}{\mathrm{triv}}
\newcommand{\alg}{\mathrm{alg}}
\newcommand{\ur}{\mathrm{ur}}
\newcommand{\Coker}{\mathrm{Coker}}
\newcommand{\Aut}{\mathrm{Aut}}
\newcommand{\Ext}{\mathrm{Ext}}
\newcommand{\Iso}{\mathrm{Iso}}
\newcommand{\M}{\mathcal{M}}
\newcommand{\GL}{\mathrm{GL}}
\newcommand{\Fil}{\mathrm{Fil}}
\newcommand{\an}{\mathrm{an}}
\renewcommand{\c}{\mathcal }
\newcommand{\W}{\mathcal W}
\newcommand{\R}{\mathcal R}
\newcommand{\crys}{\mathrm{crys}}
\newcommand{\st}{\mathrm{st}}
\newcommand{\CM}{\mathrm{CM\Gamma }}
\newcommand{\CV}{\mathcal{C}\mathcal{V}}
\newcommand{\G}{\mathrm{G}}
\newcommand{\Map}{\mathrm{Map}}
\newcommand{\Sym}{\mathrm{Sym}}
\newcommand{\Spec}{\mathrm{Spec}}
\newcommand{\Gr}{\mathrm{Gr}}
\newcommand{\I}{\mathrm{Im}}
\newcommand{\Frac}{\mathrm{Frac}}
\newcommand{\LT}{\mathrm{LT}}
\newcommand{\Alg}{\mathrm{Alg}}
\newcommand{\MG}{\mathrm{M\Gamma }}
\newcommand{\To}{\longrightarrow}
\newcommand{\md}{\mathrm{mod}}
\newcommand{\MF}{\mathrm{MF}}
\newcommand{\CMF}{\mathcal{M}\mathcal{F}}
\newcommand{\Aug}{\mathrm{Aug}}
\renewcommand{\c}{\mathcal }
\newcommand{\uL}{\underline{\mathcal L}}
\newcommand{\Md}{\mathrm{Md}}
\newcommand{\wt}{\widetilde}
\newcommand{\op}{\mathrm}
\newcommand {\w}{\op{wt}}
\newcommand{\Ad}{\op{Ad}}
\newcommand{\ad}{\op{ad}}
\newcommand{\D}{\mathcal D}
\newcommand{\fr}{\mathfrak}

\title [Automorphisms of local fields]
{Automorphisms of local fields of period $p^M$ and nilpotent class $<p$}
\author{Victor Abrashkin}
\address{Department of Mathematical Sciences, 
Durham University, Science Laboratories, 
South Rd, Durham DH1 3LE, United Kingdom \ \&\ Steklov 
Institute, Gubkina str. 8, 119991, Moscow, Russia
}
\email{victor.abrashkin@durham.ac.uk}
\date{May 23, 2016}
\keywords{} 
\medskip 

\begin{abstract}  Suppose $K$ is a finite extension of $\mathbb{Q}_p$ 
containing a $p^M$-th primitive root of unity. For $1\leqslant s<p$ denote by 
$K[s,M]$ the maximal $p$-extension of $K$ with the Galois group of period 
$p^M$ and nilpotent class $s$. We apply the nilpotent Artin-Schreier theory 
together with the theory of the field-of-norms functor to give an explicit 
description of the Galois groups $\op{Gal}(K[s,M]/K)$. As application we 
prove that the ramification subgroup $\Gamma ^{(v)}_K$ of the absolute Galois group 
of $K$ acts trivially on $K[s,M]$ if and only if $v>e_K(M+s/(p-1))-(1-\delta _{1s})/p$, 
where $e_K$ is the ramification index of $K$ and $\delta _{1s}$ is the Kronecker symbol. 
\end{abstract}
\maketitle

\section*{Introduction} 
\medskip 

Everywhere in the paper $M\in\N $ is fixed and $p\ne 2$ is prime.  

Let $K$ be a complete discrete valuation field of characteristic $0$ 
with finite residue field $k\simeq\F _{q_0}$, where $q_0=p^{N_0}$, $N_0\in\N $. 
Fix an algebraic closure $\bar K$ of $K$ 
and denote by $K_{<p}(M)$ the maximal $p$-extension 
of $K$ in $\bar K$ with the Galois group of 
nilpotent class $<p$ and exponent $p^M$. 
Then $\Gamma _{<p}(M):=\op{Gal}(K_{<p}(M)/K)=\Gamma /\Gamma ^{p^M}C_p(\Gamma )$, 
where $\Gamma =\op{Gal}(\bar K/K)$ and $C_p(\Gamma )$ is the closure of the  
subgroup of commutators of order $\geqslant p$. 

Let \ $\{\Gamma ^{(v)}\}_{v\geqslant 0}$ be the ramification filtration of 
$\Gamma $ in upper numbering \cite{Se}. The importance of this additional 
structure on the Galois group $\Gamma $ (which reflects arithmetic properties of 
$K$) can be illustrated by the local analogue 
of the Grothendieck Conjecture \cite{Mo, Ab4, Ab5}: the knowledge of 
$\Gamma $ together with the filtration $\{\Gamma ^{(v)}\}_{v\geqslant 0}$ 
is sufficient to recover uniquely the isomorphic class of $K$ 
in the category of complete discrete valuation fields. 

Let  
$\{\Gamma _{<p}(M)^{(v)}\}_{v\geqslant 0}$ 
be the induced ramification filtration of $\Gamma _{<p}(M)$. Then the problem of  
arithmetical description of $\Gamma _{<p}(M)$ is the problem 
of explicit description of the filtration $\{\Gamma _{<p}(M)^{(v)}\}_{v\geqslant 0}$ 
in terms of generators of $\Gamma _{<p}(M)$. 

An analogue of this problem was studied in \cite{Ab1,Ab2,Ab3} in the case of local fields  
$\c K$ of characteristic $p$ with residue field $k$. More precisely, let  
$\c G=\op{Gal} (\c K_{sep}/\c K)$ and $\c G _{<p}(M)=
\c G/\c G^{p^M}C_p(\c G)$. In \cite{Ab1, Ab2} we developed 
a nilpotent version of the Artin-Schreier theory which allows us to 
construct identification of profinite groups 
$\c G _{<p}(M)=G(\c L)$. Here $\c L$ is a profinite Lie 
$\Z /p^M $-algebra of nilpotent class $<p$ and $G(\c L)$ 
is the pro-$p$-group, obtained from $\c L$ by the Campbell-Hausdorff composition law, 
cf. Subsection \ref{S1.2} below for more details and Subsection 1.1 in 
\cite{Ab10} for 
non-formal comments about nilpotent Artin-Schreier theory.

On the one hand, the above identification of $\c G_{<p}(M)$ with $G(\c L)$ depends 
on a choice of uniformising element in $\c K$ and, therefore, is not functorial 
(in particular, it can't be used directly to develop a nilpotent analog of 
classical local class field theory). On the other hand, the 
ramification subgroups $\c G_{<p}(M)^{(v)}$ can be now described in terms of 
appropriate ideals $\c L^{(v)}$ of the Lie algebra $\c L$. 
The definition of these ideals essentially uses the 
extension of scalars $\c L_k:=\c L\otimes W_M(k)$ of $\c L$ (such operation 
does not exist in the category of $p$-groups) 
together with the appropriate explicit system of generators 
of $\c L_k$, cf. Subsection \ref{S1.4}.  
This justifies the advantage of the language of Lie algebras in the theory of 
$p$-extensions of local fields.

In this paper we apply the above characteristic $p$ results to the study 
of similar properties in the mixed characteristic case, i.e. to the study 
of the group $\Gamma _{<p}(M)$ together with its ramification filtration.  
 Our main tool is 
the Fontaine-Wintenberger theory of the field-of-norms functor \cite{Wi}. 
Note also that we assume that $K$ contains a 
primitive $p^M$-th root of unity  and our methods 
generalize the approach from \cite{Ab11} 
where we considered the case $M=1$.  In some sense our theory can be treated 
as nilpotent version of Kummer's theory in the context of 
complete discrete valuation fields. 
As a result, we identify $\Gamma _{<p}(M)$ with the group $G(L)$, 
where  
$L$ is a Lie $\Z /p^M$-algebra and for an appropriate ideal $\c J$ of $\c L$, 
we have the following exact sequence  of Lie algebras 
\begin{equation}\label{E1} 
0\To\c L/\c J\To L\To C_M \To 0\, .
\end{equation}
Here $C_M$ is a cyclic group of order $p^M$ with the trivial structure of Lie algebra 
over $\Z /p^M$.  

As a first step in the study 
of $L$, we give an explicit description of the ideal $\c J$. 
More generally, if $C_s(L)$ is the closure of the ideal of commutators 
of order $\geqslant s$ 
in $L$, then for $s\geqslant 2$, we have $C_s(L)\subset\c L/\c J$ 
and exact sequence \eqref{E1} induces the exact sequences 
$$0\To\c L/\c L(s)\To L/C_s(L)\To C_M \To 0,$$
where all $\c L(s)$ are ideals in $\c L$. The main result of Section \ref{S3}, 
Theorem \ref{T3.3},  
describes these ideals $\c L(s)$ with $2\leqslant s\leqslant p$ 
and gives in particular that  $\c J=\c L(p)$. 

Extension \eqref{E1} splits in the category of $\Z /p^M$-modules and  
its structure can be 
given by explicit construction of a lift $\tau _{<p}$ of a generator of $C_M$ to $L$ 
and the appropriate differentiation  
$\op{ad}\tau _{<p}\in\op{End}(\c L/\c J)$. The study of 
$\op{ad}\tau _{<p}$ will be done in the next paper 
via methods used in the case $M=1 $ in \cite{Ab11}. 

In Section \ref{S4} we apply our approach to find for $1\leqslant s<p$, 
the maximal upper ramification numbers $v(K[s,M]/K)$ of the maximal 
extensions $K[s,M]$ of $K$ with Galois groups of period $p^M$ and nilpotent class $s$. 
(The maximal upper ramification number for a finite extension $K'/K$ in $\bar K$ 
is the maximal $v_0$ such that 
the ramification subgroups $\Gamma ^{(v)}$ act trivially on $K'$ if $v>v_0$.) 
This result can be stated 
in the following form, cf. Theorem \ref{Th4.4} from Section \ref{S4}:
\medskip 

$\bullet $\ {\it If $[K:\Q _p]<\infty $ and $\zeta _M\in K$ then for $1\leqslant s<p$, 
$$v(K[s,M]/K)=e_K\left (M+\frac{s}{p-1}\right )-\frac{1-\delta _{s1}}{p}\,.$$
where $e_K$ is the ramification index of $K/\Q _p$ and $\delta $ is the Kronecker symbol.}
\medskip 

\remark The case $s=1$ is very well-known and can be established without 
the assumption $\zeta _M\in K$. Is it possible to remove 
this restriction when $s>1$? 
\endremark 
\medskip 

{\bf Notation}. If $\mathfrak M$ is an $R$-module then its extension of scalars 
$\mathfrak M\otimes _RS$ will be very often denoted by $\mathfrak M_S$, 
cf. also another agreement in 
Subsection \ref{S1.1}. Very often we drop off the indication to $M$ 
from our notation and use just 
$K_{<p}, \Gamma _{<p}, \c G_{<p}$ etc. instead of $K_{<p}(M), 
\Gamma _{<p}(M), \c G_{<p}(M)$, etc. 

\medskip 

\section{ Preliminaries } \label{S1}
\medskip 

Let $\c K$ be a complete discrete valuation field of characteristic $p$ 
with residue field $k\simeq\F _{q_0}$, $q_0=p^{N_0}$, and fixed uniformiser $t_0$. 
In other words, $\c K=k((t_0))$. 

As earlier, $\c G=
\op{Gal}(\c K_{sep}/\c K)$, $\c K_{<p}=\c K_{<p}(M)$ 
is the subfield of $\c K_{sep}$ fixed by 
$\c G^{p^M}C_p(\c G)$ and 
$\c G_{<p}=\c G_{<p}(M)=\op{Gal}(\c K_{<p}/\c K)$. 
The ramification filtration of $\c G_{<p}$ 
was studied in details in \cite{Ab1, Ab2, Ab3}. 
We overview these results in the next subsections. 

\subsection { Compatible system of lifts modulo $p^M$} \label{S1.1}

The uniformizer $t_0$ of $\c K$ gives a $p$-basis for any separable 
extension $\c E$ of $\c K$, i.e. $\{1, t_0, \dots ,t_0^{p-1}\}$ 
is a basis of the $\c E^p$-module $\c E$. 
We can use $t_0$ to construct 
a functorial on $\c E$ (and on $M$) 
system of lifts $O(\c E)(=O_M(\c E))$ of   
$\c E$ modulo $p^M$. Recall that these lifts appear in the form 
$W_M(\sigma ^{M-1}\c E)[t]$, where 
$W_M$ is the functor of Witt vectors of length $M$, 
$\sigma $ is the Frobenius morphism of taking $p$-th power 
and ${t}=(t_0,0,\dots ,0)\in W_M(\c K)$. 

Note that ${t}\in O(\c K)\subset W_M(\c K)$,   
${t}\,\op{mod}\,p=t_0$ and 
$\sigma {t}={t}^p$.  The lift $O(\c K)$ is naturally identified 
with the algebra of formal Laurent series $W_M(k)(({t}))$ in the 
variable ${t}$ with coefficients in $W_M(k)$. 
A lift $\sigma $ of the absolute Frobenius endomorphism of $\c K$ 
to $O(\c K)$ is uniquely determined by the condition $\sigma {t}=
{t}^p$. For a separable extension $\c E$ of $\c K$ we then have an  
extension of the Frobenius $\sigma $ from 
$\c E$ to $O(\c E)(=W_M(\sigma ^{M-1}\c E)[t])$. As a result, 
we obtain a compatible system of lifts of the Frobenius endomorphism 
of $\c K_{sep}$ to $O(\c K_{sep})=\underset{\c E}\varinjlim O(\c E)$. 
For simplicity, we shall denote this lift also by $\sigma $. Note that $\sigma $ 
is induced by the standard Frobenius endomorphism $W_M(\sigma )$ of 
$W_M(\c K_{sep})\supset O(\c K_{sep})$. 

Suppose $\eta _0\in \Aut\,\c K$ and let $W_M(\eta _0)$ be the induced 
automorphism of $W_M(\c K)$. If $W_M(\eta _0)(t)\in O(\c K)$ then 
$\eta :=W_M(\eta _0)|_{O(\c K)}$ is a lift of $\eta _0$ to $O(\c K)$, i.e. 
$\eta\in\Aut\,O(\c K)$ and $\eta \,\op{mod}\,p=\eta _0$. 
With the above notation and assumption 
(in particular, $\eta (t)\in O(\c K)$) we have even more.

\begin{Prop} \label{P1.1}   
Suppose 
$\c E$ is separable over $\c K$, $\eta _{\c E0}\in\op{Aut}\,\c E$ and  
$\eta _{\c E0}|_{\c K}=\eta _0$. Then $\eta _{\c E}:=W_M(\eta _{\c E0})|_{O(\c E)}$ 
is a lift of $\eta _{\c E0}$ to $O(\c E)$ 
such that $\eta _{\c E}|_{O(\c K)}=\eta $.  
\end{Prop}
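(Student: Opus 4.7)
The plan is to verify the proposition as a pure functoriality check. Since $W_M$ is a functor on rings, $W_M(\eta_{\mathcal{E}0})$ is automatically a ring automorphism of $W_M(\mathcal{E})$ whose reduction modulo $p$ (via the surjection $W_M(\mathcal{E})\twoheadrightarrow \mathcal{E}$ onto the first Witt coordinate) gives back $\eta_{\mathcal{E}0}$. The only substantive task is to show that the subring $O(\mathcal{E})=W_M(\sigma^{M-1}\mathcal{E})[t]\subset W_M(\mathcal{E})$ is stable under $W_M(\eta_{\mathcal{E}0})$. Since $O(\mathcal{E})$ is generated as a ring by $W_M(\sigma^{M-1}\mathcal{E})$ together with the single element $t$, it suffices to verify stability on each of these generators.

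For the piece $W_M(\sigma^{M-1}\mathcal{E})$: the ring automorphism $\eta_{\mathcal{E}0}$ of the characteristic-$p$ ring $\mathcal{E}$ commutes with the absolute Frobenius $\sigma\colon x\mapsto x^p$, hence $\eta_{\mathcal{E}0}(\sigma^{M-1}\mathcal{E})=\sigma^{M-1}\eta_{\mathcal{E}0}(\mathcal{E})=\sigma^{M-1}\mathcal{E}$, and applying $W_M$ shows $W_M(\eta_{\mathcal{E}0})$ preserves $W_M(\sigma^{M-1}\mathcal{E})$. For the element $t=(t_0,0,\ldots,0)\in W_M(\mathcal{K})\subset W_M(\mathcal{E})$, the componentwise Witt action, combined with $\eta_{\mathcal{E}0}|_{\mathcal{K}}=\eta_0$, gives $W_M(\eta_{\mathcal{E}0})(t)=W_M(\eta_0)(t)$, which lies in $O(\mathcal{K})\subset O(\mathcal{E})$ by the hypothesis introduced just above the proposition.

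Combining these, $\eta_{\mathcal{E}}:=W_M(\eta_{\mathcal{E}0})|_{O(\mathcal{E})}$ is a well-defined ring endomorphism of $O(\mathcal{E})$. The compatibility $\eta_{\mathcal{E}}|_{O(\mathcal{K})}=\eta$ is immediate since both sides equal the restriction of $W_M(\eta_0)$ to $O(\mathcal{K})$. The mod-$p$ reduction of $\eta_{\mathcal{E}}$ returns $\eta_{\mathcal{E}0}$ by naturality of the surjection $O(\mathcal{E})\twoheadrightarrow \mathcal{E}$ together with functoriality of $W_M$. For invertibility, run the same argument with $\eta_{\mathcal{E}0}^{-1}$ in place of $\eta_{\mathcal{E}0}$; the analogous hypothesis is automatic, because $\eta\in\mathrm{Aut}\,O(\mathcal{K})$ forces $\eta^{-1}(t)=W_M(\eta_0^{-1})(t)\in O(\mathcal{K})$, and this produces a two-sided inverse for $\eta_{\mathcal{E}}$.

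No step here is genuinely difficult; the entire proof is a functoriality verification. The only point at which the specific hypothesis of the proposition is invoked is the stability check for the generator $t$, which is why the condition $W_M(\eta_0)(t)\in O(\mathcal{K})$ was built into the preceding discussion --- once that is granted, the rest is essentially automatic from the fact that ring automorphisms commute with the absolute Frobenius of a characteristic-$p$ ring.
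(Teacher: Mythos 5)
Your proof is correct and follows essentially the same route as the paper: check stability of $O(\c E)=W_M(\sigma^{M-1}\c E)[t]$ under $W_M(\eta_{\c E0})$ on the two generating pieces, using the hypothesis $W_M(\eta_0)(t)\in O(\c K)$ for the element $t$. You merely make explicit two points the paper leaves implicit — that $\eta_{\c E0}$ commutes with the absolute Frobenius (hence preserves $\sigma^{M-1}\c E$) and that invertibility follows by running the argument for $\eta_{\c E0}^{-1}$.
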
 

\begin{proof} Indeed, using that  $O(\c E)=W_M(\sigma ^{M-1}\c E)[t]$,  we obtain 
$$\eta _{\c E}(W_M(\sigma ^{M-1}\c E))=W_M(\eta _{\c E0})
(W_M(\sigma ^{M-1}\c E))\subset W_M(\sigma ^{M-1}\c E))\subset O(\c E)\, ,$$
and $\eta _{\c E}(t)=W_M(\eta _{\c E0})(t)=W_M(\eta _0)(t)\in O(\c K)\subset O(\c E)$. 
So, $\eta _{\c E}(O(\c E))\subset O(\c E)$. Obviously, 
$\eta _{\c E}\,\op{mod}\,p=\eta _{\c E0}$. 
\end{proof}

\begin{remark} The above lifts $\eta _{\c E}$ commute with $\sigma $ if and only if 
$\eta $ commutes with $\sigma $, i.e. 
$\sigma (\eta (t))=\eta (t^p)$. In particular, if $\eta (t)=t\alpha ^{p^{M-1}}$ 
with $\alpha\in O(\c K)$ then 
$\sigma (\eta (t))=t^p\alpha ^{p^M}=\eta (t^p)$ 
(use that $\sigma (\alpha )\equiv \alpha ^p\,\op{mod}\,pO(\c K)$). 
\end{remark} 

A very special case of the above proposition appears as the following property: 

--- if $\c E/\c K$ is Galois then the elements $g$ of the group $\op{Gal}(\c E/\c K)$ 
can be naturally lifted to (commuting with $\sigma $) automorphisms of 
$O(\c E)$  
via setting $g(t)=t$. 
Therefore,  $O(\c K_{sep})$ has a natural structure of a $\c G$-module, 
the action of $\c G$ commutes with $\sigma $, $O(\c K_{sep})^{\c G}=O(\c K)$ and 
$O(\c K_{sep})|_{\sigma =\op{id}}=W_M(\F_p)$. 
\medskip 

Everywhere below we shall use the following simplified notation. 

{\bf Notation.}
 If $\mathfrak M$ is a $\Z /p^M$-module and $\c E$ is a separable extension of $\c K$ 
we set $\mathfrak M_{\,\c E}:=\mathfrak M _{O(\c E)}
(=\mathfrak M\otimes _{\Z /p^M}O(\c E)$). Similarly, we agree that 
$\mathfrak M_k:=\mathfrak M\otimes _{\Z/p^M}W_M(k)$. 
\medskip

\subsection{ Categories of $p$-groups and 
Lie $\Z/p^M$-algebras, \cite{Kh, La}} \label{S1.2}

If $L$ is a Lie $\Z/p^M$-algebra of nilpotent class $<p$, 
denote by $G(L)$ the $p$-group obtained from $L$ via the Campbell-Hausdorff 
composition law $\circ $ defined for $l_1,l_2\in L$ via  
$\wt{\exp}(l_1\circ l_2)=\wt{\exp}l_1\cdot \wt{\exp}l_2$. Here    
$$\wt{\exp}(x)=1+x+\dots +x^{p-1}/(p-1)!\, $$ 
is 
the truncated exponential from $L$ to the quotient of the enveloping 
algebra $\c A$ of $L$ modulo the $p$-th power of 
its augmentation ideal $J$. (This construction of the 
Campbell-Hausdorff operation was introduced in \cite{Ab1}, Subsection 1.2.) 

The correspondence $L\mapsto G(L)$ induces equivalence of the categories  
of finite Lie $\Z/p^M$-algebras and 
finite $p$-groups  
of exponent $p^M$ of the same nilpotent class $1\leqslant s_0<p$.
This equivalence can be extended to the similar categories 
of profinite Lie algebras and groups.  
\medskip 

\subsection{Witt pairing and Hilbert symbol, \cite{AJ, Fo}} \label{S1.3}

Let 
$$E(\alpha ,X)=\op{exp}\left (\alpha X+
\frac{\sigma (\alpha )X^p}{p}+\dots +\frac{\sigma ^n(\alpha )X^{p^n}}{p^n}
\dots \right )\in W(k)[[X]],$$
where $\alpha\in W(k)$, be the Shafarevich version of the Artin-Hasse exponential. Set 
$\Z^+(p)=\{a\in\N\ |\ \op{gcd}(a,p)=1\}$. Then  
any element $u\in\c K^*\op{mod}\,\c K^{*p^M}$ can be uniquely written 
as 
$$u=t_0^{a_0}\prod_{a\in\Z^+(p)}
E(\alpha _a,t_0^{a})^{1/a}\op{mod}\,\c K^{*p^M},$$
where $a_0=a_0(u)\in\Z\,\op{mod}\,p^M$ and all  
$\alpha _a=\alpha _a(u)\in W(k)\,\op{mod}\,p^M$. 

Let $\mathfrak M$ be a profinite free $W_M(k)$-module with the set of generators 
$\{D_0\}\cup\{D_{an}\ |\ a\in\Z^+(p),n\in\Z/N_0\}$. 
Use the correspondences 
\begin{equation} \label{E1.5} t_0\mapsto D_0,
\ \ \ E(\alpha ,t_0^a)^{1/a}\mapsto\sum_{n\,\op{mod}N_0} 
\sigma ^n(\alpha )D_{an},
\end{equation} 
to identify $\c K^*/\c K^{*p^M}$ 
with a closed $\Z /p^M$-submodule in $\mathfrak M$. Under this identification we have 
$\c K^*/\c K^{*p^M}\otimes_{\Z /p^M}W_M(k)=\mathfrak M$. 

Define the continuous action of the group $\langle\sigma\rangle =
\op{Gal}(k/\F_p)$ on $\mathfrak M$ as an extension of 
the natural action on $W_M(k)$ by setting 
$\sigma D_0=D_0$ and $\sigma D_{an}=D_{a,n+1}$. Then 
$\c K^*/\c K^{*p^M}=\mathfrak M^{\op{Gal}(k/\F_p)}$. 

The Witt pairing 
$$O(\c K)/(\sigma -\op{id})O(\c K)\times 
\c K^*/\c K^{*p^M}\To \Z /p^M,$$
is given explicitly by the symbol $[f,g)=\op{Tr}\left (
\op{Res}(f\,d_{\op{log}}\op{Col}\,g)\right )$. 
Here $\op{Tr}:W_M(k)\To \Z /p^M$ is induced by the trace of 
the field extension 
$k/\F _p$, 
$f\in O(\c K)$  and 
$\op{Col}\,g$ is the image of $g\in \c K^*/\c K^{*p^M}$ under the group homomorphism  
$\op{Col}:\c K^*/\c K^{*p^M}\longrightarrow O^*_M(\c K)$ uniquely defined 
on the above free generators of $\c K^*/\c K^{*p^M}$  via  the conditions 
$t_0\mapsto {t}$ and $E(\alpha ,t_0^a)\mapsto E(\alpha ,{t}^a)$. 
The Witt pairing is non-degenerate and  determines the identification 
$$\c K^*/\c K^{*p^M}=\op{Hom}_{\op{cont}}
(O(\c K)/(\sigma -\op{id})O(\c K),\Z /p^M).$$
It also  coincides with the Hilbert symbol 
(in the case of local fields of characteristic $p$) and allows us to specify 
explicitly the reciprocity map $\kappa :\c K^*/\c K^{*p^M}\longrightarrow\c G_{<p}^{ab}$ 
of class field theory. Namely, in the above notation we have 
$\kappa (g)f=f+[f,g)$.

\subsection{ Lie algebra $\c L$ and identification $\eta _M$} \label{S1.4}

Let $\wt{\c L}$ be a free profinite Lie $\Z /p^M$-algebra 
with the module of (free) generators $\c K^*/\c K^{*p^M}$. Then the  
$W_M(k)$-module  $\wt{\c L}_k$ has the  
set of free generators 
\begin{equation}\label{Eq1.2}
\{D_0\}\cup\{D_{an}\ |\ a\in\Z^+(p),n\in\Z/N_0\}.
\end{equation}

If $C_p(\wt{\c L})$ is the closure of the ideal of commutators of order 
$\geqslant p$, then 
$\c L=\wt{\c L}/C_p(\wt{\c L})$ 
is the maximal quotient of $\wt{\c L}$ of nilpotent class $<p$.

\begin{remark} $\c L_k$ is a free object in the category of profinite 
Lie $W_M(k)$-algebras of nilpotent class $<p$ with the set of free generators \eqref{Eq1.2}. 
\end{remark} 

We shall use the same notation 
$D_0$ and $D_{an}$ for the images of the elements of \eqref{Eq1.2} in $\c L$.
 Choose $\alpha _0\in W_M(k)$ such that 
$\op{Tr}\,\alpha _0=1$.  

Consider 
$e=\alpha _0D_0+\sum_{a\in\Z^+(p)} {t}^{-a}D_{a0}
\in G(\c L_{\c K})$. If we set 
$D_{0n}:=(\sigma ^n\alpha _0)D_0$ then $e$ can be written as 
$\sum _{a\in\Z ^0(p)}t^{-a}D_{a0}$, where $\Z ^0(p)=\Z ^+(p)\cup\{0\}$.

Fix $f\in G(\c L_{\c K_{sep}})$ such that $\sigma f=e\circ f$. 
Then for $\tau\in\c G $, 
the correspondence 
$$\tau\mapsto (-f)\circ \tau f\in G(\c L_{K_{sep}})
|_{\sigma =\op{id}}=G(\c L),$$
induces the identification of profinite groups 
$\eta _M:\c G_{<p}\simeq G(\c L)$. 

Note that $f\in \c L_{\c K_{<p}}$ and $\c G_{<p}$ strictly acts on the 
$\c G$-orbit of $f$. 
\medskip 

The above result is a covariant version of the nilpotent Artin-Schreier theory developed 
in \cite{Ab2}, cf. also Subsection 1.1 in \cite{Ab10} 
 for the relation between the covariant and contravariant 
versions of this theory and for appropriate non-formal comments. 
\medskip  

We shall use below a fixed choice of $f$ and use the notation for $e$ and $f$ without 
further references. 
\medskip 

\subsection{ Relation to class field theory}  \label{S1.5}

The above identification $\eta _M$ taken modulo 
$C_2(\c G_{<p})$ gives an isomorphism of profinite $p$-groups 
$$\eta _M^{ab}:\c G_{<p}^{\op{ab}}\longrightarrow  
\c L^{ab}=\c L/C_2(\c L)=\mathfrak M^{\Gal (k/\F _p)}=\c K^*/\c K^{*p^M}.$$

\begin{Prop} \label{P1.2} 
$\eta _M^{\,ab}$ is induced by the inverse to the reciprocity map 
of local class field theory $\kappa $. 
\end{Prop}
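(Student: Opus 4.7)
The plan is to compute $\eta_M^{\op{ab}}(\kappa(g))$ directly for an arbitrary $g\in\c K^*/\c K^{*p^M}$ and verify that it agrees with the image of $g$ under the identification \eqref{E1.5}, which is equivalent to $\eta_M^{\op{ab}}=\kappa^{-1}$.

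First I would pass to the abelianization of the defining equation $\sigma f=e\circ f$ from Subsection \ref{S1.4}. In $\c L^{\op{ab}}_{\c K_{\sep}}=\mathfrak M\otimes_{W_M(k)}O(\c K_{\sep})$ the Frobenius $\sigma$ acts diagonally: on $\mathfrak M$ via $\sigma D_0=D_0$ and $\sigma D_{an}=D_{a,n+1}$, and on $O(\c K_{\sep})$ as the standard Frobenius. Writing $f^{\op{ab}}=D_0\otimes\varphi_0+\sum_{a,n}D_{an}\otimes\varphi_{an}$ with $\varphi_0,\varphi_{an}\in O(\c K_{\sep})$ and equating components of $(\sigma-\op{id})f^{\op{ab}}=e^{\op{ab}}=\alpha_0D_0+\sum_a t^{-a}D_{a0}$ yields $(\sigma-\op{id})\varphi_0=\alpha_0$, $\varphi_{an}=\sigma^n\varphi_{a,0}$ for all $a,n$, and $(\sigma^{N_0}-\op{id})\varphi_{a,0}=t^{-a}$.

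Next I would compute $\eta_M^{\op{ab}}(\kappa(g))=\kappa(g)f^{\op{ab}}-f^{\op{ab}}$ coefficient by coefficient. Since $\kappa(g)$ commutes with $\sigma$, the $D_{an}$-coefficient equals $\sigma^n(\kappa(g)\varphi_{a,0}-\varphi_{a,0})$, so it suffices to determine $\beta_a:=\kappa(g)\varphi_{a,0}-\varphi_{a,0}\in W_M(k)$ and $\beta_0:=\kappa(g)\varphi_0-\varphi_0\in\Z/p^M$. The $D_0$-coefficient is delivered directly by the Witt pairing: $\beta_0=[\alpha_0,g)=\Tr\,\Res(\alpha_0\,d_{\op{log}}\op{Col}\,g)$, and using $\op{Col}\,g=t^{a_0(g)}\prod_a E(\alpha_a(g),t^a)^{1/a}$ together with $\Tr\,\alpha_0=1$ a routine residue computation gives $\beta_0=a_0(g)$.

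The main obstacle is that $\varphi_{a,0}$ satisfies the higher-order equation $(\sigma^{N_0}-\op{id})\varphi_{a,0}=t^{-a}$ rather than a first-order Artin--Schreier--Witt equation over $\sigma$, so the pairing formula of Subsection \ref{S1.3} cannot be applied to it verbatim. To bypass this I would introduce, for each $c\in W_M(k)$, the telescoped lift $v_{a,c}:=\sum_{i=0}^{N_0-1}\sigma^i(c)\sigma^i\varphi_{a,0}\in O(\c K_{\sep})$, which satisfies $(\sigma-\op{id})v_{a,c}=c\,t^{-a}\in O(\c K)$. The Witt pairing then gives $\kappa(g)v_{a,c}-v_{a,c}=[c\,t^{-a},g)=\Tr(c\,\alpha_a(g))$ (in the residue only the term with $bp^n=a$, $b\in\Z^+(p)$, contributes, and $a\in\Z^+(p)$ forces $b=a$, $n=0$). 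On the other hand a direct expansion yields $\kappa(g)v_{a,c}-v_{a,c}=\sum_i\sigma^i(c\beta_a)=\Tr_{W_M(k)/W_M(\F_p)}(c\,\beta_a)$, and non-degeneracy of the trace pairing on $W_M(k)$ forces $\beta_a=\alpha_a(g)$. Assembling,
$$\eta_M^{\op{ab}}(\kappa(g))=a_0(g)\,D_0+\sum_{a,n}\sigma^n(\alpha_a(g))\,D_{an},$$
which coincides with the image of $g=t_0^{a_0(g)}\prod_a E(\alpha_a(g),t_0^a)^{1/a}$ under \eqref{E1.5}, so $\eta_M^{\op{ab}}\circ\kappa=\op{id}$ and the proposition follows.
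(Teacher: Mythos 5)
Your proof is correct and is in substance the same argument as the paper's: the paper chooses dual $\Z/p^M$-bases $\{\beta_i\},\{\gamma_i\}$ of $W_M(k)$ for the trace form and works with the generators $D_{ia}=\sum_n\sigma^n(\beta_i)D_{an}$, whose coefficients $f_{ia}$ in $f\,\op{mod}\,C_2$ satisfy the first-order equations $\sigma f_{ia}-f_{ia}=\gamma_i t^{-a}$ --- these $f_{ia}$ are exactly your telescoped elements $v_{a,\gamma_i}$, and the duality $\Tr(\beta_i\gamma_j)=\delta_{ij}$ plays the role of your appeal to non-degeneracy of the trace pairing. The only cosmetic difference is that you compute $\eta_M^{\op{ab}}(\kappa(g))$ for a general $g$ while the paper checks the actions of $(\eta_M^{\op{ab}})^{-1}D_{ia}$, $(\eta_M^{\op{ab}})^{-1}D_0$ against those of $\kappa(E(\beta_i,t_0^a)^{1/a})$, $\kappa(t_0)$ on generators.
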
 

\begin{proof} Indeed, let $\{\beta_i\}_{1\leqslant i\leqslant N_0}$ be a 
$\Z /p^M$-basis of $W_M(k)$ and let  
$\{\gamma _i\}_{1\leqslant i\leqslant N_0}$ be its dual 
basis with respect to the bilinear form 
induced by the trace of the field extension $W(k)[1/p]/\Q _p$. 

If $a\in\Z ^+(p)$ and $E(\beta _i,t_0^a)^{1/a}=D_{ia}$, then 
$D_{ia}=\sum _{n}\sigma ^n(\beta _i)D_{an}$, and, therefore, 
$D_{a0}=\sum_{i} \gamma _iD_{ia}$. This implies that  
$$e=\sum_{i,a} {t}^{-a}\gamma _iD_{ia}+\alpha _0D_0\,
\op{mod}\,C_2(\c L_{\c K}),$$
$$f=\sum_{ i,a} f_{ia}D_{ia}+f_0D_0\,
\op{mod}\,C_2(\c L_{\c K_{sep}}),$$
where all $f_{ia}, f_0\in O(\c K_{<p})$, $\sigma f_{ia}-f_{ia}=\gamma _i{t}^{-a}$ and   
$\sigma f_0-f_0=\alpha _0$. 
From the definition of $\eta _M$ it follows formally that 
for $\tau _{ia}=(\eta _M^{ab})^{-1}D_{ia}$ and $\tau _0=(\eta _M^{ab})^{-1}D_{0}$,    
$\tau _{ia}f_{i_1a_1}=f_{i_1a_1}+\delta (ii_1)\delta (aa_1)$,  
$\tau _0f_{i_1a_1}=f_{i_1a_1}$, $\tau _{ia}f_0=f_0$ and $\tau _0f_0=f_0+1$. 
(Here $\delta $ is the Kronecker symbol.)  

Now the explicit formula for the Hilbert symbol from Subsection \ref{S1.3}  
shows that  
$\kappa (E(\beta _i,t_0^a)^{1/a})$ and $\kappa (t_0)$  
act by the same formulae as $\tau _{ia}$ and, resp., $\tau _0$. 
\end{proof} 
\medskip 

\subsection{ Construction of lifts of analytic automorphisms } \label{S1.6}

Let $\eta _0\in\op{Aut}\c K$.  Then there is a lift  $\eta _{<p,0}
\in\op{Aut}\c K_{<p}$ of $\eta _0$.  (Use that the subgroup 
$\c G^{p^M}C_p(\c G)$ of 
$\c G$ is characteristic.)  For any another such lift $\eta '_{<p,0}$,  
we have $\eta '_{<p,0}\eta ^{-1}_{<p,0}\in\c G _{<p}$. 

The covariant version of the Witt-Artin-Schreier theory \cite{Ab2}, Section 1    
(cf. also \cite{Ab10}, Subsection 1.1 and 
\cite{Ab11}, Section 1),  
gives explicit description of 
the automorphisms $\eta _{<p,0}$ in terms of the 
identification $\eta _M$. 
Consider a special case of 
this construction when $\eta _0$ admits a lift 
$\eta\in\Aut\, O(\c K)$ which commutes with $\sigma $, and therefore 
we have the appropriate lifts 
$\eta _{<p}\in\op{Aut}\,O(\c K_{<p})$,  
cf. Subsection \ref{S1.1}.  
Then in terms of our fixed elements $e$ and $f$,  we have 
$\eta _{<p}(f)=c\circ (A\otimes \id _{O(\c K_{<p})})f$, 
where $c\in \c L_{\c K}$ and $A\in\Aut \c L$ can be found from the relation 
$$(\id _{\c L}\otimes \eta )e=\sigma c\,\circ (A\otimes\id _{O(\c K)})e\circ (-c)\, ,$$
cf. Subsection 1.5 in \cite{Ab2}, or Propositiobn 1.1 in \cite{Ab11}, 
and Subsection \ref{S3.2} below.

In other words, if $(A\otimes \id _{W_M(k)})(D_{a0})=\wt{D}_{a0}$ then  
$$\sum _{a\in\Z ^0(p)}\eta ({t})^{-a}D_{a0}=
\sigma c\,\circ 
\left (\sum _{a\in\Z ^0(p)}{t}^{-a}\wt{D}_{a0}\right )\circ (-c)\,.$$

Note that proceeding as in \cite{Ab2}, Subsection 1.5.4, cf. also \cite{Ab11}, 
Subsection 1.2, we can verify (this fact will be used systematically below) 
that with respect 
to the identification $\eta _M$, the automorphism $A$ coincides with the 
conjugation 
$\op{Ad}\,\eta _{<p}:\tau\mapsto \eta _{<p}^{-1}\,\tau\,\eta _{<p}$ (here 
$\tau\in\c G_{<p}$). 
\medskip 

\ \ 

\subsection{ Ramification filtration in $\c L$} \label{S1.7} 

For $v\geqslant 0$, denote by $\c G_{<p}^{(v)}$ the ramification subgroup 
of $\c G_{<p}$ with the upper index $v$. 
Let $\c L^{(v)}$ be the ideal of $\c L$ such that 
$\eta _M(\c G_{<p}^{(v)})=G(\c L^{(v)})$. The ideals 
$\c L^{(v)}$ have the following explicit description. 

First, for any $a\in\Z ^0(p)$ and $n\in\Z $, set $D_{an}:=D_{a,n\,\op{mod}\,N_0}$. 
In other words, we allow the second index 
in all $D_{an}$ to take integral values and assume that  
$D_{an_1}=D_{an_2}$ iff $n_1\equiv n_2\,\op{mod}\,N_0$.  
For $s\geqslant 1$, agree to use the notation 
$(\bar a,\bar n)_s$, where $\bar a=(a_1,\dots ,a_s)$ has coordinates 
in $\Z ^0(p)$ and $\bar n=(n_1,\dots ,n_s)\in\Z ^s$. Then we can attach 
to $(\bar a,\bar n)_s$ the commutator 
$[\dots [D_{a_1n_1},D_{a_2n_2}],\dots ,D_{a_sn_s}]$ and 
set $\gamma (\bar a, \bar n)_s=a_1p^{n_1}+\dots +a_sp^{n_s}$. 
For any $\gamma\geqslant 0$, let 
$\c F^0_{\gamma ,-N}$ be the element from $\c L_{k}$ given by 
\begin{equation} \label{E1.9} \c F^0_{\gamma ,-N}=
\sum  _{\gamma (\bar a,\bar n)_s=\gamma }p^{n_1}a_1\eta (\bar n)
[\dots [D_{a_1n_1},D_{a_2n_2}],\dots ,D_{a_sn_s}]
\end{equation} 
where $\eta (\bar n)$ equals $(s_1!(s_2-s_1)!\dots (s-s_{l})!)^{-1}$ if 
$0\leqslant n_1=\dots =n_{s_1}>n_{s_1+1}=
\dots =n_{s_2}>\dots >n_{s_{l}}=\dots =n_{s}\geqslant -N$, 
and equals to zero otherwise. Then the main result of \cite{Ab3} 
(translated into the covariant setting, cf. \cite{Ab4}, Subsections 1.1.2 and 1.2.4) states that 
\medskip 

$\bullet $\ {\it there is $\wt{N}(v)\in\N $ such that if we fix any 
$N\geqslant \wt{N}(v)$, then 
$\c L^{(v)}$ is the minimal ideal of $\c L$ such that for all 
$\gamma\geqslant v$,  
$\c F^0_{\gamma ,-N}\in\c L^{(v)}_k$. }
\medskip

\section {Filtration $\{\c L(s)\}_{s\ge 1}$} \label{S2}
 
In this section 
we define a decreasing central filtration 
$\{\c L(s)\}_{s\geqslant 1}$ in the $\Z /p^M$-Lie algebra 
$\c L$ from Subsection \ref{S1.4}. 
Its definition depends on a choice of a special element $S\in\m (\c K):=tW_M(k)[[t]]
\subset O(\c K)$. This element $S$ (together with the appropriate 
elements $S_0$ and $S'$ from its definition) will be specified in Section \ref{S4}, 
where we apply our results to the mixed characteristic case.

\subsection{Elements $S_0,S',S\in\m (\c K)$} \label{S2.1} \  
 
Let $[p]$ be the isogeny of multiplication by $p$ in the 
formal group $\op{Spf}\,\Z _p[[X]]$ over $\Z _p$ 
with the logarithm $X+X^p/p+\dots +X^{p^n}/p^n+\dots $.

Choose $S_0\in\m (\c K)$ and set $S'=[p]^{M-1}(S_0)$ and $S=[p]^M(S_0)$. 
Then $S,S'\in\m (\c K)$,  
they both depend only on the residue $S_0\,\op{mod}\,p$ and 
$S=\sigma S'$. In particular, if $e^*\in\N $ is such that 
$S\,\op{mod}\,p$ generates the ideal $(t_0^{e^*})$ in $k[[t_0]]$ then 
$e^*\equiv 0\,\op{mod}\,p^M$.  

\begin{Prop} \label {P2.1}
 {\rm a)} $dS=0$ in $\Omega ^1_{O(\c K)}$;
\medskip 

{\rm b)} there is $S^{\prime\prime }\in\m (\c K)$, such that 
$S=S'(p+S'')$; 
\medskip 

{\rm c)} there are $\eta _0,\eta _1\in W_M(k)[[t]]^\times $ 
and $\eta _2\in W_M(k)[[t]]$ such that  
$$S=t^{e^*}\eta _0+pt^{e^*/p}\eta _1+p^2\eta _2.$$ 
\end{Prop}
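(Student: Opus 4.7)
Parts (a) and (b) follow directly from the structure of $[p]$ in the formal group with logarithm $\ell(X)=\sum_{n\geq 0}X^{p^n}/p^n$. For (a), differentiating the identity $\ell([p]^M X)=p^M\ell(X)$ (valid in $\Q_p[[X]]$) yields $\ell'([p]^M X)\cdot([p]^M)'(X)=p^M\ell'(X)$. Since $\ell'(X)=1+X^{p-1}+X^{p^2-1}+\cdots$ has constant term $1$, both $\ell'(X)$ and $\ell'([p]^M X)$ are units in $\Z_p[[X]]$; dividing gives $([p]^M)'(X)\in p^M\Z_p[[X]]$. Substituting $X=S_0\in\m(\c K)$ and using the chain rule, $dS=([p]^M)'(S_0)\,dS_0$, which vanishes in $\Omega^1_{O(\c K)}$ because $p^M=0$ in $W_M(k)$. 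For (b), since $[p](X)\in X\Z_p[[X]]$ has linear term $pX$, I can write $[p](X)=X(p+h(X))$ with $h\in X\Z_p[[X]]$; then $S=S'(p+h(S'))$ and $S'':=h(S')\in\m(\c K)$.

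For (c), I would refine (b) to the factorization $[p](X)=X^p+pX(1+w(X))$ with $w\in X\Z_p[[X]]$ (the condition $w(0)=0$ follows from $[p]'(0)=p$), and then analyze $S=[p](S')$ modulo $p^2$. Since $[p]^M(X)\equiv X^{p^M}\bmod p$, if $c_0 t^{a_0}$ is the leading term of $S_0\bmod p$ then $e^*=a_0 p^M$, and $S'\bmod p = u_1 t^{e^*/p}$ for some unit $u_1\in k[[t]]^\times$. Lifting $u_1$ to $\tilde u_1\in W_M(k)[[t]]^\times$ produces a decomposition $S'=t^{e^*/p}\tilde u_1+pY$ with $Y\in W_M(k)[[t]]$.

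A binomial expansion shows $(S')^p\equiv\tilde u_1^p\, t^{e^*}\pmod{p^2}$: each middle term $\binom{p}{k}(t^{e^*/p}\tilde u_1)^{p-k}(pY)^k$ of $(t^{e^*/p}\tilde u_1+pY)^p$ is divisible by $p^{k+1}\geq p^2$ (since $p\mid\binom{p}{k}$ for $1\leq k\leq p-1$), and $(pY)^p$ is divisible by $p^p$. Combining,
\[
S=(S')^p+pS'(1+w(S'))\equiv\tilde u_1^p\, t^{e^*}+p t^{e^*/p}\tilde u_1(1+w(S'))\pmod{p^2}.
\]
Set $\eta_0:=\tilde u_1^p$, a unit, and $\eta_1:=\tilde u_1(1+w(S'))$; the latter is also a unit because $w(0)=0$ and $S'\in\m(\c K)$ force $1+w(S')$ to have constant $t$-term equal to $1$. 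The residue $S-t^{e^*}\eta_0-p t^{e^*/p}\eta_1$ then lies in $p^2 W_M(k)[[t]]$ and yields $\eta_2$. The main obstacle is this modulo-$p^2$ bookkeeping in (c): the careful binomial analysis of $(S')^p$, together with the verification that any $t^{e^*}$-degree correction coming from $pS'w(S')$ can be absorbed into the coefficient of $p t^{e^*/p}$ without destroying its unit status.
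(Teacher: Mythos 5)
Your proof is correct and follows essentially the same route as the paper: all three parts are deduced from the structure of the isogeny $[p]$ (namely $[p](X)\equiv X^p\bmod p$, the linear term $pX$, and the resulting decomposition of $[p](S')$ modulo $p^2$). The only cosmetic differences are that for (a) you differentiate the logarithm identity $\ell([p]^MX)=p^M\ell(X)$ instead of iterating the mod-$p$ congruence, and for (c) you carry out the binomial bookkeeping explicitly where the paper invokes the congruence $[p](X)\equiv pX+X^p\bmod(pX^{p^2-p+1},p^2X)$.
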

 
\begin{proof} a) The congruence $[p]X\equiv X^p\,\op{mod}\,p\Z _p[[X]]$ implies that 
$d([p]X)\in\,p\Z _p[[X]]$. Therefore, $dS=0$ in $\Omega ^1_{O(\c K)}$. 

b) Note that  $[p](X)\equiv pX\,\op{mod}\,X^2$. 
Therefore, there are $w_i\in\Z _p$ such that 
$S=[p]S'=pS'+\sum _{i\geqslant 2}w_iS^{\prime i}$ and we can take 
$S^{\prime\prime }=\sum _{i\geqslant 1}w_{i+1}S^{\prime i}$. 

c) The $t_0$-adic valuation of $S'\,\op{mod}\,p$ equals 
$e^*/p$. Then our property is implied 
by the following equivalence in $\Z _p[[X]]$
$$[p](X)\equiv pX+X^p\,\op{mod}\,(pX^{p^2-p+1}, p^2X).$$
\end{proof}

\begin{remark}
 We shall use below property a) in the following form: 
\medskip 

{\it if $s\in\N $ and $S^{s}=\sum_{ l\geqslant 1}\gamma _{ls}{t}^l$, 
where all $\gamma _{ls}\in W_M(k)$, then $l\gamma _{ls}=0$}.
\end{remark}
\medskip 

\subsection {Morphism $\iota $} \label{S2.2} \  
 Let $\c U=(1+t_0k[[t_0]])^{\times }$ be the $\Z _p$-module 
of principal units in $\c K$. 
Then $\c U/\c U^{p^M}$ is a closed $\Z /p^M$-submodule 
in $\c K^*/\c K^{*p^M}$. 
 Note that 
$\m (\c K)=W_M(\m _{\c K})\cap O(\c K)$, where $\m _{\c K}$ is the maximal ideal 
in the valuation ring of $\c K$.  
Consider a (unique) continuous homomorphism 
$$\iota :\c U\longrightarrow \m (\c K)$$
such that for any $\alpha\in W_M(k)$ and $a\in\Z ^+(p)$, 
$\iota : E(\alpha ,t_0^a)\mapsto\alpha {t}^a$ 
(here $E$ is the Shafarevich function, cf. Subsection \ref{S1.3}).  
 
Then $\iota $ induces an identification of $\c U/\c U^{p^M}$ with the closed 
$W_M(k)$-submodule  
$$\op{Im}\,\iota =
\left\{\sum _{a\in\Z ^+(p)}\alpha _at^a\ |\ \alpha _a\in W_M(k)\right\}$$ 
in $O(\c K)$. This submodule is topologically 
generated over $W_M(k)$ by all ${t}^a$ with $a\in\Z^+(p)$.
\medskip 

\subsection {Definition of $\{\c L(s)\}_{s\geqslant 1}$} \label{S2.3}

Set $(\c K^*/\c K^{*p^M})^{(1)}=\c K^*/\c K^{*p^M}$. For $s\geqslant 1$,  
let $(\c K^*/\c K^{*p^M})^{(s+1)}=(\op{Im}\,\iota )S^{s}$ 
 with respect to the identification 
$\c U/\c U^{p^M}=\op{Im}\,\iota $ from Subsection  \ref{S2.2}.  
Note, that $S=\sigma S'$ implies that for any $s\in\N $, 
$(\op{Im}\,\iota )S^s\subset \op{Im}\,\iota $.  

\begin{definition} $\{\c L(s)\}_{s\geqslant 1}$ is 
the minimal central filtration 
of ideals of the Lie algebra ${\c L}$ such that for all $s\geqslant 1$, 
$\c L(s)\supset (\c K^*/\c K^{*p^M})^{(s)}$.
\end{definition} 

The ideals $\c L(s)$ can be defined by induction on $s$ as follows. Let   
$\c L(1)={\c L}$; then for $s\geqslant 1$, the ideal 
$\c L(s+1)$ is generated by the elements of 
$(\c K^*/\c K^{*p^M})^{(s+1)}$ and 
$[\c L(s),{\c L}]$. 
Note also that for any $s$, $(\c K^*/\c K^{*p^M})
\cap \c L(s)=(\c K^*/\c K^{*p^M})^{(s)}$. 
(Use that $\Z /p^M$-module  $\c L(s)$ is isomorphic to 
$(\c K^*/\c K^{*p^M})^{(s)}\oplus (\c L(s)\cap C_2(\c L))$.
\medskip 

In addition, for any $s\geqslant 1$, the quotients 
$(\c K^*/\c K^{*p^M})^{(s)}/(\c K^*/\c K^{*p^M})^{(s+1)}$ are free $\Z/p^M$-modules. 
This easily implies that all $\c L(s)/\c L(s+1)$ are also free $\Z /p^M$-modules. 

\subsection {Characterization of  
$\{\c L(s)\}_{s\ge 1}$ in terms of $e\in\c L_{\c K}$} \label{S2.4}  
 Recall that  $e=\sum_{ a\in\Z^0(p)} {t}^{-a}D_{a0}$, 
cf. Subsection \ref{S1.4}.

\begin{Prop} \label{P2.2} 

The filtration $\{\c L(s)\}_{s\geqslant 1}$ is the minimal central filtration 
in ${\c L}$ such that $\c L(1)={\c L}$ and for all $s\geqslant 1$,   
$$S^{s}e\in {\c L}_{\m (\c K)}+
\c L(s+1)_{\c K}.$$ 
\end{Prop}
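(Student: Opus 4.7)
My plan is to prove both that $\{\c L(s)\}_{s\ge 1}$ satisfies the stated condition and that it is minimal with this property, by induction on $s$. The heart of the argument is an explicit expansion of $S^se$ modulo $\c L_{\m(\c K)}$.

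First I would expand $S^s=\sum_{l\ge se^*}c^{(s)}_l t^l$ with $c^{(s)}_l\in W_M(k)$. Since $S=\sigma S'$ forces every exponent in the support of $S^s$ to be divisible by $p$, and since every $a\in\Z^+(p)$ is coprime to $p$, no constant term arises in any $S^st^{-a}$. Writing $S^se=\alpha_0 S^s D_0+\sum_{a\in\Z^+(p)}S^st^{-a}D_{a0}$, the first summand and the positive-power parts of each $S^st^{-a}$ lie in $\c L_{\m(\c K)}$. Setting $D^{(s)}_{j0}:=\sum_{l}c^{(s)}_l D_{j+l,0}\in\c L_k$ and reindexing $j=a-l$ yields
\begin{equation*}
S^se\equiv\sum_{j\in\Z^+(p)}t^{-j}D^{(s)}_{j0}\pmod{\c L_{\m(\c K)}}.
\end{equation*}

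To verify the condition for $\{\c L(s)\}$, I would unravel the identifications in Subsections \ref{S1.3}--\ref{S1.4} and \ref{S2.3} to see that $\alpha t^jS^s\in(\op{Im}\,\iota)S^s$ corresponds in $\mathfrak M=\c L^{\op{ab}}_k$ to the Galois-invariant sum $\sum_n\sigma^n(\alpha)\sigma^n(D^{(s)}_{j0})\in(\c K^*/\c K^{*p^M})^{(s+1)}\subset\c L(s+1)$. Varying $\alpha$ over a $\Z/p^M$-basis $\{\beta_i\}$ of $W_M(k)$ and using invertibility of the matrix $(\sigma^n\beta_i)_{n,i}$ to form appropriate $W_M(k)$-linear combinations, one extracts $D^{(s)}_{j0}\in\c L(s+1)_k$. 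Hence $\sum_jt^{-j}D^{(s)}_{j0}\in\c L(s+1)_{\c K}$ and the condition holds.

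For minimality, assume $\{\c L'(s)\}$ is a central filtration of $\c L$ satisfying the condition, and induct to show $\c L(s)\subset\c L'(s)$. The commutator half of the generators of $\c L(s+1)$ is automatic: $[\c L(s),\c L]\subset[\c L'(s),\c L]\subset\c L'(s+1)$ by the inductive hypothesis and the centrality of $\{\c L'(s)\}$. For the $(\c K^*/\c K^{*p^M})^{(s+1)}$ half, use that $O(\c K)/\m(\c K)=W_M(k)[t^{-1}]$ is free over $W_M(k)$ on $\{t^{-j}\}_{j\ge 0}$, so every element of $\c L_{\c K}/\c L_{\m(\c K)}$ has a unique expansion $\sum_jl_jt^{-j}$ with $l_j\in\c L_k$. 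Matching coefficients in any representation $S^se=x+y$ with $x\in\c L_{\m(\c K)}$ and $y\in\c L'(s+1)_{\c K}$ forces $D^{(s)}_{j0}\in\c L'(s+1)_k$. Since $\c L'(s+1)_k=\c L'(s+1)\otimes W_M(k)$ is automatically $\sigma$-stable, applying $\sigma^n$ gives all $\sigma^n(D^{(s)}_{j0})\in\c L'(s+1)_k$; reassembling Galois-invariant $W_M(k)$-combinations recovers every generator of $(\c K^*/\c K^{*p^M})^{(s+1)}$ in $(\c L'(s+1)_k)^\sigma=\c L'(s+1)$. Hence $\c L(s+1)\subset\c L'(s+1)$.

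The main technical obstacle is not any single deep step but the careful bookkeeping required to move between $\c K^*/\c K^{*p^M}$, its identification with $\op{Im}\,\iota\subset O(\c K)$, and its tensor-up $\mathfrak M=\c L^{\op{ab}}_k$ where the $D_{an}$ appear as $W_M(k)$-generators; once this is set up, both halves reduce to the uniqueness of power-series expansion in $\c L_k[t^{-1}]$ together with the standard Galois-resolvent extraction.
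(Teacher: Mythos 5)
Your strategy coincides with the paper's: expand $S^se$ modulo $\c L_{\m(\c K)}$ as $\sum_{j\in\Z^+(p)}t^{-j}D^{(s)}_{j0}$ with $D^{(s)}_{j0}=\sum_l\gamma_{ls}D_{j+l,0}$, then convert membership of these coefficients in $\c L(s+1)_k$ into membership of the corresponding products of Shafarevich exponentials in $(\c K^*/\c K^{*p^M})^{(s+1)}$ by Galois descent for $W_M(k)/W_M(\F_p)$ (the paper's Lemma \ref{L2.4}); your coefficient-matching in the free $W_M(k)$-module $O(\c K)/\m(\c K)$ for the minimality half is the same mechanism run in reverse, and is fine.

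There is, however, one genuine gap, hidden in the sentence ``unravel the identifications \dots to see that $\alpha t^jS^s$ corresponds in $\mathfrak M$ to $\sum_n\sigma^n(\alpha)\sigma^n(D^{(s)}_{j0})$.'' The two identifications you are composing are normalized differently: \eqref{E1.5} sends $E(\alpha,t_0^a)^{1/a}$ (with the exponent $1/a$) to $\sum_n\sigma^n(\alpha)D_{an}$, whereas $\iota$ sends $E(\alpha,t_0^a)$ (without it) to $\alpha t^a$. Tracing $\alpha t^jS^s=\sum_l\alpha\gamma_{ls}t^{j+l}$ through $\iota^{-1}$ and then through \eqref{E1.5} therefore yields $\sum_l(j+l)\sum_n\sigma^n(\alpha\gamma_{ls})D_{j+l,n}$, not $\sum_n\sigma^n(\alpha)\sigma^n(D^{(s)}_{j0})$. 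The two agree (up to the harmless unit $j$) only because $(j+l)\gamma_{ls}=j\gamma_{ls}$, i.e.\ because $l\gamma_{ls}=0$ for all $l$ --- which is Proposition \ref{P2.1}a), $dS=0$, a special property of $S=[p]^M(S_0)$ that fails for a general element of $\m(\c K)$. The paper isolates exactly this point as Lemma \ref{L2.3}. Without it, your claimed correspondence breaks, and with it both halves of your argument, since both rely on reading the generators of $(\c K^*/\c K^{*p^M})^{(s+1)}$ inside $\mathfrak M$ as the $\sigma$-invariant $W_M(k)$-combinations of the $\sigma^n(D^{(s)}_{j0})$. So the ``bookkeeping'' you defer at the end is not formal: it is precisely where the hypothesis on $S$ enters. (A minor slip elsewhere: the support of $S^s$ starts at degree $s$, not $se^*$, because $S$ has low-order terms with coefficients divisible by $p$; this does not affect your argument.)
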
 

\begin{proof} We need the following two lemmas. 
 
\begin{Lem} \label{L2.3}

For all $s\geqslant 1$ and $\alpha _a\in W_M(k)$ 
where $a\in\Z^+(p)$, we have  
$$\prod_{ a\in\Z^+(p)} E(\alpha _a,t_0^a)
\in (\c K^*/\c K^{*p^M})^{(s+1)}\Leftrightarrow 
\prod_{ a\in\Z^+(p)} E(\alpha _a,t_0^a)^{1/a}
\in (\c K^*/\c K^{*p^M})^{(s+1)}.$$
\end{Lem} 

\begin{proof}[Proof of Lemma]

We must prove that    
$$\sum_{ a\in\Z^+(p)} \alpha _a{t}^a\in 
S^{s}\m (\c K)\ \ \Leftrightarrow \ \ 
\sum_{ a\in\Z^+(p)} \frac{1}{a}\alpha _a
{t}^a\in S^{s}\m (\c K).$$

Let $S^{s}=\sum_{ l\geqslant 1}\gamma _{ls}{t}^l$ with  
$\gamma _{ls}\in W_M(k)$,  
then $l\gamma _{ls}=0$, cf. Remark in Subsection \ref{S2.1}. 

Suppose $\sum _{a\in\Z ^+(p)}\alpha _a{t}^{a}\in S^{s}\m (\c K)$. 

Then $\sum_a\alpha _a{t}^a=
(\sum_b\beta _b{t}^b)(\sum_l\gamma _{ls}{t}^l)$, 
where $\sum _b\beta _bt^b\in\m (\c K)$ 
 and  
$\alpha _a=\sum_{ a=b+l} \beta _b\gamma _{ls}$. This implies   
$$\frac{1}{a}\alpha _a=\sum_{ a=b+l} \frac{1}{a}\beta _b\gamma _{ls}=
\sum_{ a=b+l} \frac{1}{b}\beta _b\gamma _{ls},$$
because if $a=b+l$ and $a\in\Z^+(p)$ then  $b\in\Z^+(p)$ and 
$$\frac{1}{a}\gamma _{ls}-\frac{1}{b}\gamma _{ls}=
\frac{-l\gamma _{ls}}{ab}=0.$$

So, 
$$\sum_{ a\in\Z^+(p)} \frac{1}{a}\alpha _a{t}^a=
\left (\sum_{ b\in\Z^+(p)} \frac{1}{b}\beta _b{t}^b\right )
\left (\sum_{ l} \gamma _{ls}{t}^l\right )$$
and $\sum _{a}\frac{1}{a}\alpha _at^a\in S^{s}\m (\c K)$. 

Proceeding in the opposite direction we obtain 
the inverse statement. The lemma is proved.
\end{proof}  

\begin{Lem} \label{L2.4}

If $s\geqslant 1$ and all $\alpha _a\in W_M(k)$ then  
$$\prod_{ a\in\Z^+(p)}{\hskip -8pt} E(\alpha _a,t_0^a)^{1/a}
\in (\c K^*/\c K^{*p^M})^{(s)}\Leftrightarrow {\hskip -10pt}
\sum_{ a\in\Z^+(p)}{\hskip -7pt} \alpha _aD_{a0}\in  
(\c K^*/\c K^{*p^M})_k^{(s)}$$
\end{Lem} 
\begin{proof}[Proof of Lemma]

Suppose  
$$\prod_{ a\in\Z^+(p)} E(\alpha _a,t_0^a)^{1/a}
\in (\c K^*/\c K^{*p^M})^{(s)}.$$
Choose a $W_M(\F_p)$-basis $\{\beta _i\}$ of 
$W_M(k)$, and let $\{\gamma _i\}$ 
be its dual with respect to the trace form. Then for any $i$,  
$$\prod_{ a\in\Z^+(p)} E(\beta _i\alpha _a,t_0^a)^{1/a}
\in (\c K^*/\c K^{*p^M})^{(s)}.$$
In other words (use \eqref{E1.5} from Subsection \ref{S1.3}), 
$$c_i=\sum_{\substack{a\in\Z^+(p)\\n\in\Z/N_0\Z}} 
\sigma ^n(\beta _i)\sigma ^n(\alpha _a)D_{an}
\in \left (\c K^*/\c K^{*p^M}\right )^{(s)}\subset \c L(s),$$
and 
$$\sum_{ i} \gamma _ic_i=\sum_{ a\in\Z^+(p)} \alpha _a D_{a0}
\in \c L(s)_k.$$

Suppose now that \  $\sum_{ a\in\Z^+(p)} 
\alpha _aD_{a0}\in \c L(s)_k$. 
Then 
$$\sum_{ a\in\Z^+(p)} 
\alpha _aD_{a0}\in (\c K^*/\c K^{*p^M})_k^{(s)},$$
and, therefore, 
$$\sum_{ \substack{a\in\Z^+(p)\\n\in\Z/N_0\Z}} 
\sigma ^n(\alpha _a)D_{an}\in(\c K^*/\c K^{*p^M})^{(s)}.$$
This means, that 
$$\prod_{ a\in\Z^+(p)} E(\alpha _a,t_0^a)^{1/a}
\in (\c K^*/\c K^{*p^M})^{(s)}.$$
The lemma is proved. 
\end{proof} 

Now we can finish the proof of our proposition. If, as earlier,  
$S^{s}=\sum_{ l\geqslant 1} 
\gamma _{ls}{t}^l$ with $\gamma _{ls}\in W_M(k)$, then  
$(\op{Im}\,\iota )S^{s}$ is the $W_M(k)$-submodule in $\m (\c K)$ 
generated by the elements 
${t}^{a_1}S^{s}=\sum_{ l\geqslant 1} 
\gamma _{ls}{t}^{l+a_1}$, $a_1\in\Z^+(p)$. 
The above lemmas imply then that $\{\c L(s)\}_{s\ge 1}$ 
is the minimal central filtration 
in ${\c L}$ such that $\c L(1)=\c L$ and for all $a_1\in\Z^+(p),s\ge 1$, 
$$\sum_{ l\geqslant 1} \gamma _{ls}D_{a_1+l,0}
\in \c L(s+1)_k\,.$$

On the other hand, 
$$S^{s}e=\sum_{\substack{a\in\Z^0(p)\\l\geqslant 1}} 
\gamma _{ls}{t}^{-(a-l)}D_{a0}\equiv 
\sum_{ a_1\in\Z^+(p)} \left (\sum_{ l\geqslant 1} 
\gamma _{ls}
D_{a_1+l,0}\right ){t}^{-a_1}$$
modulo $\c L_{ \m (\c K)}$.
Therefore, 
$$S^{s}e\in {\c L}_{\m (\c K)}+\c L(s+1)_{\c K}
\Leftrightarrow $$
$$\sum_{l} \gamma _{ls}D_{a_1+l,0}\in \c L(s+1)_k\ \ 
\text{for all }a_1\in\Z^+(p).$$ 

The proposition is proved. 
\end{proof}

\begin{definition}  
 $\c N=\sum _{s\geqslant 1}{S^{-s}}\c L(s)_{\m (\c K)}$.
\end{definition}

Note that $\c N$ is a Lie $W_M(\F _p)$-subalgebra 
in $\c L_{\c K}$.  With this notation Proposition \ref{P2.2} 
implies the following characterization of 
the filtration $\{\c L(s)\}_{s\geqslant 1}$.

\begin{Cor} \label{C2.5} 
 $\{\c L(s)\}_{s\geqslant 1}$ is the minimal central filtration in $\c L$ 
such that $\c L(1)=\c L$ and $e\in\c N$. 
\end{Cor}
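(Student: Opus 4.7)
The corollary is a reformulation of Proposition \ref{P2.2}, so the plan is to show, for an arbitrary decreasing central filtration $\{\c L(s)\}_{s\geqslant 1}$ of $\c L$ with $\c L(1)=\c L$, that the conditions (i) ``$S^s e \in \c L_{\m(\c K)} + \c L(s+1)_{\c K}$ for every $s \geqslant 1$'' and (ii) ``$e \in \c N$'' are equivalent. Once this is established, the minimality characterization of $\{\c L(s)\}$ given by Proposition \ref{P2.2} transfers verbatim to condition (ii), yielding the corollary.

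For (ii) $\Rightarrow$ (i), write $e = \sum_{r\geqslant 1} S^{-r}\ell_r$ with $\ell_r \in \c L(r)_{\m(\c K)}$, multiply by $S^s$, and split the resulting sum at $r = s$. The head $\sum_{r\leqslant s} S^{s-r}\ell_r$ belongs to $\c L_{\m(\c K)}$ since $S^{s-r} \in O(\c K)$ and each $\ell_r \in \c L_{\m(\c K)}$. The tail $\sum_{r > s} S^{s-r}\ell_r$, expressed as $S^s e$ minus the head, lies in $\c L_{\c K}$; on the other hand, since the filtration is decreasing, its individual terms involve only elements of $\c L(s+1)$. Using the freeness of the quotients $\c L(r)/\c L(r+1)$ recorded at the end of Subsection \ref{S2.3}, $\c L(s+1)$ is a $\Z/p^M$-direct summand of $\c L$, whence $\c L(s+1)_{\c K}$ is a direct summand of $\c L_{\c K}$; projecting forces the tail into $\c L(s+1)_{\c K}$.

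For (i) $\Rightarrow$ (ii), I construct the coefficients $\ell_s \in \c L(s)_{\m(\c K)}$ inductively so that
$$S^s e - \sum_{r=1}^{s} S^{s-r}\ell_r \in \c L(s+1)_{\c K}.$$
The base case $s = 1$ is the decomposition $Se = \ell_1 + r_2$ supplied by (i). For the inductive step, multiplying the hypothesis by $S$ places the difference into $S \cdot \c L(s+1)_{\c K} \subset \c L(s+1)_{\m(\c K)}$; subtracting the decomposition $S^{s+1}e = m_{s+1} + r_{s+2}$ from (i) shows that $m_{s+1} - \sum_{r=1}^{s} S^{s+1-r}\ell_r$ lies in $\c L_{\m(\c K)} \cap \c L(s+1)_{\c K}$, which equals $\c L(s+1)_{\m(\c K)}$ by the same direct-summand argument. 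Taking this element as $\ell_{s+1}$ and checking that the leftover is $r_{s+2}\in\c L(s+2)_{\c K}$ closes the induction. The expansion $e = \sum_{r\geqslant 1} S^{-r}\ell_r$ is then obtained in the completion of $\c L_{\c K}$ with respect to the filtration $\{\c L(s+1)_{\c K}\}_{s}$, placing $e$ in $\c N$.

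The main obstacle is the bookkeeping identity $\c L_{\m(\c K)} \cap \c L(s+1)_{\c K} = \c L(s+1)_{\m(\c K)}$, needed in both directions. It rests on the $\Z/p^M$-freeness of the successive quotients $\c L(r)/\c L(r+1)$, which yields a $\Z/p^M$-module splitting $\c L = \c L(s+1) \oplus F$ preserved under tensoring with $O(\c K)$ and with $\m(\c K)$ separately. A secondary technical point is to pin down the topology in which the infinite sum defining $\c N$ converges; the filtration $\{\c L(s+1)_{\c K}\}_{s}$ on $\c L_{\c K}$ is exactly what is needed.
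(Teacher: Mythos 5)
Your proof is correct and takes essentially the same route as the paper: both reduce the corollary to showing, for a fixed central filtration, that $e\in\c N$ is equivalent to the condition of Proposition \ref{P2.2}, with the easy direction obtained by splitting $\sum_r S^{s-r}\ell_r$ at $r=s$ and the converse by an induction driven by the $\Z/p^M$-freeness of $\c L(s)/\c L(s+1)$ (the paper packages this as its displayed property about $l'(s)$, you as the identity $\c L_{\m(\c K)}\cap\c L(s+1)_{\c K}=\c L(s+1)_{\m(\c K)}$). The one blemish is the asserted inclusion $S\cdot\c L(s+1)_{\c K}\subset\c L(s+1)_{\m(\c K)}$, which is false ($S$ is a unit in $O(\c K)=W_M(k)((t))$, so $S\,O(\c K)\not\subset\m(\c K)$); fortunately your induction only uses $S\cdot\c L(s+1)_{\c K}\subset\c L(s+1)_{\c K}$ together with the visible $\c L_{\m(\c K)}$-membership of $m_{s+1}$ and of the terms $S^{s+1-r}\ell_r$, so the step still closes.
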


\begin{proof}
 It will be sufficient to verify that 
 $$e\in\c N\ \Leftrightarrow \ \forall s\geqslant 1,\ S^se\in\c L_{\m (\c K)}+\c L(s+1)_{\c K}\,.$$
 The ``if'' part is obvious. The ``only if'' part can be proved by induction on $s$ via the following property: 
 \medskip 
 
--- if $l'(s)\in \c L(s)_{\c K}$ and $Sl'(s)\in \c L_{\m (\c K)}+\c L(s+1)_{\c K}$ then 
 $l'(s)\in S^{-1}\c L(s)_{\m (\c K)}+\c L(s+1)_{\c K}$ 
 (use that $\c L(s)/\c L(s+1)$ is free $\Z /p^M$-module). 
\end{proof}

\medskip 

\subsection {Element $e^{\dag }\in G(\c L_{\c K})$ } \label{S2.5}  

Recall that  
$S\,\op{mod}\,p$ generates the ideal $(t_0^{e^*})$ in $k[[t_0]]$. 
 Therefore, 
the projections of the elements of the set 
$$\left\{ S^{-m}{t^b}\ |\ 1\leqslant b<e^*, 
\op{gcd}(b,p)=1, m\in\N \right\}\cup \{\alpha _0\}$$
form a basis of $O(\c K)/(\sigma -\id )O(\c K)$ over $W_M(k)$.

\begin{Prop} \label{P2.6}
There are $V_{(0)}\in\c L$, $x\in S\c N$ and $V_{(b,m)}\in 
\c L_{k}$ , where 
$m\geqslant 1$, $1\leqslant b<e^*$, $\op{gcd}(b,p)=1$, 
such that 
\medskip 
 
{\rm a)} $e^{\dag}:=\sum_{m,b} S^{-m}{t}^bV_{(b,m)}+
\alpha _0V_{(0)}\in\c N$;
\medskip 

{\rm b)} $e^{\dag}=(-\sigma x)\circ e\circ x$.
\end{Prop}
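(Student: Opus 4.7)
The plan is to carry out an iterative construction, correcting $e$ by conjugation along the filtration $\{\c L(s)\}_{s\geqslant 1}$. The two key tools are the Campbell-Hausdorff identity
$$(-\sigma x)\circ e\circ x \equiv e - (\sigma-\id)x \pmod{[\c L_{\c K},\c L_{\c K}]},$$
and the basis property just stated before the proposition: $\{S^{-m}t^b\} \cup \{\alpha_0\}$ is a $W_M(k)$-basis of $O(\c K)/(\sigma-\id)O(\c K)$. At the abelian level (modulo $C_2(\c L)_{\c K}$), I would expand each coefficient $t^{-a}$ of $e$ as $t^{-a}\equiv \sum_{m,b}\lambda_{a,m,b}S^{-m}t^b + \mu_a\alpha_0 \pmod{(\sigma-\id)O(\c K)}$, with $\lambda_{a,m,b},\mu_a\in W_M(k)$. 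The Witt pairing calculation $[t^{-a},t_0) = \Tr\Res(t^{-a-1}\,dt)=0$ for $a\geqslant 1$ forces $\mu_a=0$, so the $\alpha_0$-coefficient of $e$ is preserved as $D_0\in\c L$; this sets $V_{(0)}:=D_0$ and $V^{(1)}_{(b,m)}:=\sum_a\lambda_{a,m,b}D_{a0}\in\c L_k$. The remaining $(\sigma-\id)$-exact piece, together with $e\in\c N$ from Corollary \ref{C2.5}, allows the initial correction $x_1$ to be distributed across the summands $S^{1-s}\c L(s)_{\m(\c K)}$ of $S\c N$.

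For the inductive step, suppose $x_s\in S\c N$ has been constructed so that $(-\sigma x_s)\circ e\circ x_s$ is in the required form modulo $\c L(s+1)_{\c K}$, with error $\delta_s\in\c L(s+1)_{\c K}$. Since $\c L(s+1)/\c L(s+2)$ is free over $\Z/p^M$ (Subsection \ref{S2.3}), applying the basis property to the coefficients of $\delta_s$ modulo $\c L(s+2)_{\c K}$ isolates the $(\sigma-\id)$-exact contribution as an element $\Delta\in S^{-s}\c L(s+1)_{\m(\c K)}\subset S\c N$. The Campbell-Hausdorff commutators produced by $\Delta$ lie in $[\c L(s+1),\c L]_{\c K}\subset\c L(s+2)_{\c K}$, so they leave the expansion modulo $\c L(s+2)_{\c K}$ intact. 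Setting $x_{s+1}:=x_s\circ\Delta$ advances the construction; the residue argument (together with the bracket-depth structure of the commutator corrections) continues to pin the cumulative $\alpha_0$-coefficient inside $\c L$.

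Termination is guaranteed because $\c L$ has nilpotent class $<p$ and $S^{s-1}\to 0$ in $\m(\c K)$, so the filtration $\c L(s)$ converges to zero; in the limit one obtains $x\in S\c N$ and $e^\dag = (-\sigma x)\circ e\circ x$ of the stated form, establishing (b). Part (a), $e^\dag\in\c N$, follows automatically from the explicit form together with $V_{(b,m)}\in\c L(m)_k$. The main obstacle is ensuring at every step both the requirement $\Delta\in S^{-s}\c L(s+1)_{\m(\c K)}$ (and not merely $\Delta\in\c L_{\c K}$) and the condition $V_{(0)}\in\c L$ (rather than $\c L_k$); both hinge on the freeness of $\c L(s)/\c L(s+1)$ as a $\Z/p^M$-module together with careful tracking of residues through the Campbell-Hausdorff expansion, reflecting how the $S$-adic decomposition governing $\c N$ interacts with the Lie brackets.
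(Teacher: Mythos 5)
Your overall strategy coincides with the paper's: expand $e$ in the basis $\{S^{-m}t^b\}\cup\{\alpha _0\}$ of $O(\c K)/(\sigma -\id )O(\c K)$ at the first step, then correct iteratively by conjugation along the filtration, keeping track of where the corrections live. However, there is a genuine gap exactly at the point you yourself flag as ``the main obstacle,'' and it is not a technicality one can wave through: the entire content of the proposition is that the $(\sigma -\id )$-antiderivative $\Delta$ of the error term can be chosen inside the \emph{bounded} module $S^{-s}\c L(s+1)_{\m (\c K)}$, so that the cumulative correction $x$ lands in $S\c N$ and $e^{\dag }$ lands in $\c N$. The paper isolates this as a separate statement (Lemma \ref{L2.7}): for $l\in S^{-i_0}\mathfrak M_{\m (\c K)}$ one can write $l=\sum _{b,m}S^{-m}t^bl_{(b,m)}+\alpha _0l_{(0)}+\sigma\tilde l-\tilde l$ with $\tilde l$ \emph{again} in $S^{-i_0}\mathfrak M_{\m (\c K)}$. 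The nontrivial case is a positive-power monomial $t^{a_0}/S^{i_0}$ with $p\mid a_0$, which is not in the span of the basis elements $S^{-m}t^b$ (those have $\op{gcd}(b,p)=1$); one must peel it off as $s'+(\sigma s'-s')$ with $s'=t^{a_1}/S^{\prime\,i_0}=t^{a_1}(p+S^{\prime\prime })/S^{i_0}$, and the convergence of this descent rests on Proposition \ref{P2.1}b) together with the valuation estimate $a_1+e^0>a_0$ coming from $S^{\prime\prime }\,\op{mod}\,p\in (t_0^{e^0})$. Nothing in your proposal supplies this mechanism; ``applying the basis property to the coefficients of $\delta _s$'' only gives you some $\Delta\in\c L(s+1)_{\c K}$, not $\Delta\in S^{-s}\c L(s+1)_{\m (\c K)}$.

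A second, related defect is that your induction hypothesis is too weak to set this up even if you had the lemma: you record the error after step $s$ only as $\delta _s\in\c L(s+1)_{\c K}$, whereas the argument needs the error to be controlled in $S\c N^{(s+1)}=S^{-s}\c L(s+1)_{\m (\c K)}+S\c N^{(s+2)}$ (the paper's Lemma \ref{L2.8} carries exactly this through the induction, using $[\c N^{(i)},\c N]\subset\c N^{(i+1)}$ to control the Campbell--Hausdorff commutator terms). Without that refinement you cannot conclude $x\in S\c N$ nor part a). Finally, a minor point: your residue computation $[t^{-a},t_0)=0$ does not by itself force $\mu _a=0$, since the basis elements $S^{-m}t^b$ need not pair to zero with $t_0$; but this is harmless, because the statement only requires $V_{(0)}\in\c L$, which the construction delivers since the $\alpha _0$-component of $e$ is $\alpha _0D_0$ from the start and all subsequent corrections $l_{(0)}$ are taken in $\c L(i_0+1)\subset\c L$.
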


\begin{proof}

Note that $S\in\sigma \m (\c K)$ implies that the sets 
$\c\{t^{-a}\ |\  a\in\Z ^+(p)\}$ and 
$\{S^{-m}t^b\ |\ m\in\N , \op{gcd}(b,p)=1, 1\leqslant b<e^*\}$ 
generate the same $W_M(k)$-submodules in $O(\c K)/\m (\c K)$. 
This implies the existence of $V_{(0)}^{(0)}\in\c L$ and 
$V_{(b,m)}^{(0)}\in\c L_k$ such that 
\begin{equation} \label {E2.1} e\equiv e_0^{\dag }\,\op{mod}\,\c L _{\m (\c K)}
 \end{equation} 
where $e_0^{\dag }:=\sum _{(b,m)}S^{-m}t^bV^{(0)}_{(b,m)}+
\alpha _0V_{(0)}^{(0)}$.

For $i\geqslant 1$, let $\c N^{(i)}=
\sum _{s\geqslant i}S^{-s}\c L(s)_{\m (\c K)}$. Then 
\medskip 
 
$\bullet $\ $\c N^{(i)}=S^{-i}\c L(i)_{\m (\c K)}+\c N^{(i+1)}$; 
\medskip 
 
$\bullet $\ $\left [\c N^{(i)}, \c N\right ]
\subset \c N^{(i+1)}$. 
\medskip 

In particular, relation \eqref{E2.1} implies that $e=e_0^{\dag }+
\sigma x_0-x_0$, where $x_0\in \c L_{\m (\c K)}$, and we obtain 
\begin{equation} \label{E2.2} 
 (-\sigma x_0)\circ e\circ x_0\equiv e_0^{\dag}\,\op{mod}\,S\c N^{(2)}
\end{equation}
(use that $x_0,\sigma x_0\in \c L_{\m (\c K)}\subset S\c N^{(1)}$). 
Now we need the following lemma.

\begin{Lem} \label{L2.7} 
Suppose $\mathfrak M$ is a $\Z _p$-module and $i_0\in\N $. 
Then for any $l\in S^{-i_0}\mathfrak M_{\m (\c K)}$, there are 
$l_{(0)}\in\mathfrak M$, $\tilde l\in S^{-i_0}\mathfrak M_{\m (\c K)}$ and 
$l_{(b,m)}\in\mathfrak M_k$, where 
$1\leqslant m\leqslant i_0$, $\op{gcd}(p,b)=1$ and  
$1\leqslant b<e^*$, such that 
$$l=\sum _{b,m}S^{-m}t^bl_{(b,m)}+\alpha _0l_{(0)}+
\sigma\tilde l-\tilde l.$$
 \end{Lem}

\begin{proof}[Proof of Lemma \ref{L2.7}]
It will be sufficient to consider the case $\mathfrak M=\Z _p$. 
In other words, we must prove the following statement: 
\medskip 
 
$\bullet $\ {\it For any $s\in S^{-i_0}\m (\c K)$, there are 
$\beta _{(0)}\in W_M(\F _p)$, 
$\tilde s\in S^{-i_0}\m (\c K)$ and $\beta _{(b,m)}\in W_M(k)$, 
where $1\leqslant m\leqslant i_0$, $\op{gcd}(b,p)=1$ and $1\leqslant b<e^*$, 
such that} 
$$s=\sum _{b,m}\beta _{(b,m)}S^{-m}t^b+
\alpha _0\beta _{(0)}+\sigma\tilde s-\tilde s.$$
\medskip 

 We can assume that  
$s=t^{a_0}/S^{i_0}$, where 
$1\leqslant a_0<e^*$, $i_0\in\N $ and our lemma is proved 
for all elements $s$ from $pS^{-i_0}\m (\c K)+t^{a_0}S^{-i_0}\m (\c K)$. 

If $\op{gcd}\,(a_0,p)=1$ there is nothing to prove. Otherwise, 
$a_0=pa_1$ and $s=s'+\sigma (s')-s'$ with 
$s'=t^{a_1}/S^{\prime i_0}=t^{a_1}(p+S^{\prime\prime })/S^{i_0}$. 
It remains to note that $s'\in pS^{-i_0}\m (\c K)+t^{a_0}S^{-i_0}\m (\c K)$, 
because $S^{\prime\prime }\,\op{mod}\,p\in (t_0^{e^0})$, where $e^0:=e^*(1-1/p)$,  and 
$a_1+e^0=a_0/p+e^0>a_0$ (use that $a_0<e^*$).  
\end{proof} 

Continue the proof of Proposition \ref{P2.6}. 
Clearly, it is implied by the following lemma. 

\begin{Lem} \label{L2.8} 
 
For all $i\geqslant 0$, there are 
$x_i\in S\c N$, $V^{(i)}_{(b,m)}\in\c L_{k}$ 
and $V^{(i)}_{(0)}\in\c L$ such that:
\medskip 

{\rm $a_1$)} $x_{i+1}\equiv x_{i}\,\op{mod}\,S\c N^{(i+1)}$;
\medskip 

{\rm $a_2$)} $V^{(i+1)}_{(b,m)}\equiv 
V^{(i)}_{(b,m)}\,\op{mod}\,\c L(i+2)_k$; 
\medskip 

{\rm $a_3$)} $V^{(i+1)}_{(0)}\equiv V^{(i)}_{(0)}\,\op{mod}\,\c L(i+2)$
\medskip 

{\rm b)} if $e_i^{\dag }=\sum _{b,m}S^{-m}t^bV_{(b,m)}^{(i)}+
\alpha _0V_0^{(i)}$ then 
$$(-\sigma x_i)\circ e\circ x_i\equiv e_i^{\dag }
\,\op{mod}\,S\c N^{(i+2)}.$$
\end{Lem}

\begin{proof} Use the elements $V_{(b,m)}^{(0)}, V_{(0)}^{(0)}, 
e_0^{\dag }$ and $x_0$ from the beginning 
of the proof of Proposition \ref{P2.6}.  Then part b) 
holds for $i=0$ by \eqref{E2.2}. 

Let $i_0\geqslant 1$ and assume that our 
Lemma is proved for all $i<i_0$. 
Let $l\in S^{-i_0}\c L(i_0+1)_{\m (\c K)}$ be such that  
$$e^{\dag} _{i_0-1}-(-\sigma x_{i_0-1})\circ e\circ x_{i_0-1}
\equiv l\,\op{mod}\,S\c N^{(i_0+2)}\,.$$
Apply Lemma \ref{L2.7} to $\mathfrak M=\c L(i_0+1)$ and 
$l\in S^{-i_0}\c L(i_0+1)_{\m (\c K)}$. This gives us the 
appropriate elements $l_{(b,m)}\in \c L(i_0+1)_k$, 
$l_{(0)}\in \c L(i_0+1)$ and 
$\tilde l\in S^{-i_0}\c L(i_0+1)_{\m (\c K)}$. Note 
that the elements $l_{(b,m)}$ are defined 
only for $1\leqslant m\leqslant i_0$. Extend their 
definition by setting $l_{(b,m)}=0$ if $m>i_0$. 
Then the case $i=i_0$ of Lemma \ref{L2.8} holds with 
$V_{(b,m)}^{(i_0)}=V_{(b,m)}^{(i_0-1)}+l_{(b,m)}$, 
$V_{(0)}^{(i_0)}=V_{(0)}^{(i_0-1)}+l_{(0)}$ and 
$x_{i_0}=x_{i_0-1}+\tilde l$. (We use here that  
$S\c N^{(i_0+1)}=S^{-i_0}\c L(i_0+1)_{\m (\c K)}+S\c N^{(i_0+2)}$.)   

Lemma \ref{L2.8} and Proposition \ref{P2.6} are completely proved. 
\end{proof}

\end{proof}

 Proposition \ref{P2.6}b) implies that 
the elements $\sigma ^nV_{(b,m)}$, $n\in\Z /N_0$,  together 
with $V_{(0)}$ form a system of free topological generators of $\c L_k$.
 Suppose $\{\beta _i\}_{1\leqslant i\leqslant N_0}$ and 
$\{\gamma _i\}_{1\leqslant i\leqslant N_0}$ are the 
$\Z /p^M$-bases of $W_M(k)$ from the proof of Proposition \ref{P1.2}. 
Proceeding similarly to that proof introduce the elements 
$$V_{(b,m),i}:=\sum _{n\in\Z /N_0}\sigma ^n(\beta _i)\sigma ^n(V_{(b,m)})\,.$$
Then all $V_{(b,m)}$ can be recovered via the relation 
$V_{(b,m)}=\sum _i\gamma _iV_{(b,m),i}$. This implies that  
the elements 
$V_{(b,m),i}$ together with $V_{(0)}$ form a system of 
free topological generators of $\c L$. 
(Recall that $\c L$ is a free object in 
the category of Lie $\Z /p^M$-algebras of nilpotent class $<p$.) Therefore, 
we can introduce the weight function $\op{wt}$ on 
$\c L$ by setting for all $b,m,i$, 
$\op{wt}(V_{(b,m),i})=m$ and $\op{wt}(V_{(0)})=1$. Note that by  
Proposition \ref{P2.6}b) we have that
$e^{\dag}\in\c N$ if and only if 
$e\in\c N$. Now   
Proposition \ref{P2.2} implies the following corollary. 

\begin{Cor} \label{C2.7} 
For any $s\geqslant 1$, $\c L(s)=\{l\in\c L\ |\ \op{wt}(l)\geqslant s\}$. 
\end{Cor}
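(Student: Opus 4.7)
My plan is to prove $\c L(s)=W_s$ where $W_s:=\{l\in\c L:\op{wt}(l)\geqslant s\}$, by combining a Witt-pairing identification of the generators $V_{(b,m),i}$ with induction on $s$. Since $\c L$ is freely generated (in the category of profinite Lie $\Z/p^M$-algebras of nilpotent class $<p$) by $\{V_{(b,m),i}\}\cup\{V_{(0)}\}$ and weights are additive on brackets, $\{W_s\}_{s\geqslant 1}$ is a decreasing central filtration with $W_1=\c L$.

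The key step is the Witt-duality identification $V_{(b,m),i}\equiv\iota^{-1}(\beta_i\,t^b S^{m-1})\pmod{C_2(\c L)}$ and $V_{(0)}\equiv t_0\pmod{C_2(\c L)}$ in $\c L^{\op{ab}}=\c K^*/\c K^{*p^M}$. I would prove this by repeating the computation in the proof of Proposition \ref{P1.2} with $e^\dag$ in place of $e$: Proposition \ref{P2.6}(b) yields $\sigma f'=e^\dag\circ f'$ for $f':=(-x)\circ f$, so $f'$ plays for $e^\dag$ the role $f$ played for $e$. Running the Witt-pairing argument in the basis $\{S^{-m}t^b\}\cup\{\alpha_0\}$ of $O(\c K)/(\sigma-\op{id})O(\c K)$ over $W_M(k)$ (from Subsection \ref{S2.5}), and using $(\op{Im}\,\iota)S^{m-1}\subseteq\op{Im}\,\iota$ to locate the dual-basis element inside $(\c K^*/\c K^{*p^M})^{(m)}$, delivers the displayed identification.

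Granted this, the inclusion $W_s\subseteq\c L(s)$ follows because each $V_{(b,m),i}$ is a free (degree-1) generator of $\c L$ whose abelianization lies in $(\c K^*/\c K^{*p^M})^{(m)}$; by the direct sum decomposition $\c L(m)=(\c K^*/\c K^{*p^M})^{(m)}\oplus(\c L(m)\cap C_2(\c L))$ from Subsection \ref{S2.3}, we conclude $V_{(b,m),i}\in\c L(m)$, and centrality of $\{\c L(s)\}$ extends this to arbitrary weight-$s$ commutators. The reverse inclusion $\c L(s)\subseteq W_s$ I would prove by induction on $s$: the base $s=1$ is $\c L(1)=\c L=W_1$; for the step, the ideal $\c L(s)$ is generated by $(\c K^*/\c K^{*p^M})^{(s)}$ and $[\c L(s-1),\c L]$, where the former consists of $W_M(k)$-combinations of $\iota^{-1}(\beta_i t^b S^{s-1})$ which by the identification equal $V_{(b,s),i}\bmod C_2(\c L)$ and hence lie in $W_s$, while the latter sits in $[W_{s-1},W_1]\subseteq W_s$ by the inductive hypothesis.

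The main obstacle is verifying the Witt-duality identification: the Witt-pairing computation of Proposition \ref{P1.2} must be rerun in the alternative basis $\{S^{-m}t^b\}$, with careful tracking of how the change of basis $\{t^{-a}\}\leftrightarrow\{S^{-m}t^b\}$ modulo $(\sigma-\op{id})$-coboundaries interacts with the identification $\iota:\c U/\c U^{p^M}\to\op{Im}\,\iota$ and the trace-duality between $\{\beta_i\}$ and $\{\gamma_i\}$.
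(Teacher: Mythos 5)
Your reduction to the abelianization is where the argument breaks down. The identification $V_{(b,m),i}\equiv\iota^{-1}(\beta_i t^bS^{m-1})\,\op{mod}\,C_2(\c L)$ (which is plausibly correct) only controls the images of the generators in $\c L^{\op{ab}}$, and that is not enough to prove either inclusion once $s\geqslant 3$. For $W_s\subseteq\c L(s)$: writing $V_{(b,m),i}=u+c$ with $u\in(\c K^*/\c K^{*p^M})^{(m)}$ and $c\in C_2(\c L)$, the direct sum decomposition of $\c L(m)$ lets you conclude $V_{(b,m),i}\in\c L(m)$ only if $c\in\c L(m)\cap C_2(\c L)$, whereas a priori you only know $c\in C_2(\c L)\subset\c L(2)$. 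Your parenthetical ``free (degree-$1$) generator'' is precisely the unjustified step: the $V_{(b,m)}$ of Proposition \ref{P2.6} are built by the recursion of Lemma \ref{L2.8}, whose correction terms $l_{(b,m)}\in\c L(i_0+1)_k$ genuinely involve commutators, so $V_{(b,m),i}$ need not lie in the degree-one submodule $\c K^*/\c K^{*p^M}\subset\c L$. Symmetrically, in your reverse inclusion an element $u\in(\c K^*/\c K^{*p^M})^{(s)}$ satisfies $u=\sum c_{b,i}V_{(b,s),i}+c'$ with $c'\in C_2(\c L)\subseteq W_2$, which gives $u\in W_2$ but not $u\in W_s$. (A toy example: in a free Lie algebra on generators $x_1,x_2,y$ of weights $1,1,3$, replacing $y$ by $y+[x_1,x_2]$ changes no abelianization but changes the weight filtration in degree $3$.)

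The missing ingredient is exactly what Proposition \ref{P2.6}b) and Corollary \ref{C2.5} supply and what the paper's proof actually uses: the full identity $e^{\dag}=(-\sigma x)\circ e\circ x$ with $x\in S\c N$ shows $e\in\c N\Leftrightarrow e^{\dag}\in\c N$ inside the nilpotent Lie algebra itself, not just modulo $C_2(\c L)$, so $\{\c L(s)\}_{s\geqslant 1}$ is the minimal central filtration with $e^{\dag}\in\c N$. Unpacking $e^{\dag}\in\c N$ coefficientwise in the basis $\{S^{-m}t^b\}\cup\{\alpha_0\}$ forces $V_{(b,m)}\in\c L(m)_k$ on the nose, and minimality over central filtrations of the free Lie algebra on the $V_{(b,m),i}$ and $V_{(0)}$ then yields $\c L(s)=\{l\ |\ \op{wt}(l)\geqslant s\}$. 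Your Witt-pairing computation is a reasonable way to identify the abelianized generators, but it cannot replace the commutator-level bookkeeping that Lemma \ref{L2.8} performs through the filtration $\{\c N^{(i)}\}$.
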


\section {The groups $\wt{\c G}_{h}$ and $\c G_h$} \label{S3}

\subsection {Automorphism $h$} \label{S3.1}

Let $S\in O(\c K)$ be the element introduced in Subsection \ref{S2.1}.   
Let $h_0\in\Aut (\c K)$ be such that $h_0|_k=\id $ and 
$h_0(t_0)=t_0E(1,S\,\op{mod}\,p)$.  
Then $h_0$ admits a lift to $h\in\Aut\, O(\c K)$ 
such that $h|_{W_M(k)}=\id $ and  
$h(t)=tE(1,S)$. Recall that $O(\c K)=W_M(k)((t))$. 
If $n\in\N $ then denote by $h^n(t)$ the $n$-th superposition of 
the formal power series $h(t)$.  

\begin{Prop} \label{P3.1} 
 For any $n\in\N $, $h^n(t)\equiv tE(n,S)\,\op{mod}\,S^p\m (\c K)$
\end{Prop}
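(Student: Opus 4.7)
The plan is to argue by induction on $n$. The base case $n=1$ is the definition of $h$. Assume the congruence for $n$ and write $h^n(t)=tE(n,S)+R$ with $R\in S^p\m (\c K)$; applying the ring automorphism $h$ (which fixes $W_M(k)$) gives
$$
h^{n+1}(t)=h(t)\cdot E(n,h(S))+h(R)=tE(1,S)\cdot E(n,h(S))+h(R).
$$
Combined with the identity $E(1,X)\cdot E(n,X)=E(n+1,X)$ --- immediate from $E(m,X)=\exp(m\,\ell(X))$ with $\ell(X)=X+X^p/p+\cdots$, valid for $m\in\Z _p\subset W_M(k)$ because $\sigma$ fixes $\Z _p$ --- the inductive step reduces to two auxiliary assertions: (i) $h(S)\equiv S\pmod{S^p\m (\c K)}$, and (ii) $h$ preserves $S^p\m (\c K)$.

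Assertion (ii) follows quickly from (i). Writing $h(S)=S+R'$ with $R'\in S^p\m (\c K)$ and expanding $h(S)^p=(S+R')^p$ binomially, every term past $S^p$ carries a factor of $R'$ and lies in $S^p\m (\c K)$, so $h(S^p)\in S^p(1+\m (\c K))$. Since $h\in\Aut\,O(\c K)$ sends $t$ to the uniformiser $tE(1,S)$, one has $h(\m (\c K))=\m (\c K)$, so $h(S^p\m (\c K))\subseteq S^p\m (\c K)$. Given (i), the standard power-series estimate $F(S+R')-F(S)\in R'\cdot W_M(k)[[t]]$ also yields $E(n,h(S))\equiv E(n,S)\pmod{S^p\m (\c K)}$, and then $E(1,S)E(n,S)=E(n+1,S)$ completes the inductive step.

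The core of the argument is (i). Expand $S=\sum _i a_it^i$ in $W_M(k)[[t]]$; Proposition~\ref{P2.1}(a) together with the Remark after it gives $ia_i=0$ in $W_M(k)$ for every $i$. Setting $\epsilon :=E(1,S)-1\in S\cdot W_M(k)[[t]]$, so that $h(t)=t(1+\epsilon)$, the binomial formula yields
$$
h(S)-S\ =\ \sum _i a_it^i\bigl((1+\epsilon )^i-1\bigr)\ =\ \sum _{j\geqslant 1}\epsilon ^j\sum _i\binom{i}{j}a_it^i.
$$
The $j=1$ term vanishes since $\sum _iia_it^i=0$. For $2\leqslant j<p$, the identity $j!\binom{i}{j}=i(i-1)\cdots (i-j+1)$, expanded as a $\Z$-polynomial in $i$, is a linear combination of $i^k$ with $k\geqslant 1$; since $ia_i=0$ forces $i^ka_i=0$ for all $k\geqslant 1$, we get $j!\binom{i}{j}a_i=0$, and invertibility of $j!\in\Z /p^M$ for $j<p$ forces $\binom{i}{j}a_i=0$. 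The remaining terms have $j\geqslant p$, in which case $\epsilon ^j\in S^p\cdot W_M(k)[[t]]$, and the inner sum lies in $\m (\c K)$ since $a_0=0$; so the whole contribution falls in $S^p\m (\c K)$.

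The main obstacle is the combinatorial step in (i): deducing $\binom{i}{j}a_i=0$ (rather than merely $j!\binom{i}{j}a_i=0$) for $1\leqslant j<p$ depends on $j!$ being a unit in $\Z /p^M$ and breaks down at $j=p$ --- which is exactly why $S^p\m (\c K)$ appears as the modulus.
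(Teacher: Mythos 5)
Your proof is correct, and its top-level structure (induction on $n$, reduction to the single congruence $h(S)\equiv S$ modulo a power of $S$) matches the paper's. Where you genuinely diverge is in the mechanism for that key congruence. The paper writes $S=\sum_l\gamma_{l1}t^l$, uses $l\gamma_{l1}=0$ to deduce $\gamma_{l1}\in p^{M-a}W_M(k)$ for $l=l'p^a$, and then factors $E(1,S)^l$ through the Artin--Hasse exponential to show $E(1,S)^l\equiv 1\ \mathrm{mod}\,(p^a,S^p)$, so that $S(h(t))\equiv S\ \mathrm{mod}\,S^p$. You instead expand $(1+\epsilon)^i-1$ binomially and kill every term with $1\leqslant j<p$ outright via the identity $j!\binom{i}{j}=i(i-1)\cdots(i-j+1)$, which has no constant term in $i$, so $ia_i=0$ forces $\binom{i}{j}a_i=0$ once $j!$ is inverted in $\Z/p^M$; only the $j\geqslant p$ tail survives and it visibly lies in $S^p\m(\c K)$. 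Your route is more elementary (no manipulation of $E(1,X)^{p^a}$, no case split on the $p$-adic valuation of the exponent $l$), it isolates cleanly why the modulus must be $S^p$ rather than a smaller power (failure of invertibility of $p!$), and it even yields the marginally stronger congruence $h(S)\equiv S\ \mathrm{mod}\,S^p\m(\c K)$; the paper's computation, by contrast, records explicitly how the congruence degrades with the $p$-divisibility of the exponents, which is closer in spirit to the estimates used elsewhere (e.g.\ in Lemma \ref{L3.9}). Your auxiliary assertion (ii) and the estimate $E(n,h(S))\equiv E(n,S)$ are the counterparts of the paper's observation that $S(h(t))^p\equiv 0\ \mathrm{mod}\,S^p$, needed because you compose as $h\circ h^n$ rather than $h^n\circ h$; both bookkeeping choices are sound.
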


\begin{proof}
 If $n=1$ there is nothing to prove. 
Suppose proposition is proved for some $n\in\N $. Then 
$$h^{n+1}(t)=h^n(h(t))\equiv tE(1,S)E(n,S(h(t)))
\,\op{mod}\,\m (\c K)S(h(t))^p.$$ 
Recall, cf. Subsection \ref{S2.2},  that $S=\sum _{l\geqslant 1}
\gamma _{l1}t^l$, where 
$\gamma _{l1}\in W_M(k)$ and 
$\gamma _{l1}l=0$. Let $l=l'p^a$ with $\op{gcd}(l',p)=1$. 
Then $\gamma _{l1}\in p^{M-a}W_M(k)$.   

With the above notation we have in $W_M(k)[[t]]$, 
$$E(1,S)^l=\exp (p^aS+\dots +pS^{p^{a-1}})^{l'}E(1,S^{p^a})^{l'}
\equiv 1\,\op{mod}\,(p^a, S^p).$$
Therefore (use that $\gamma _{l1}p^a=0$), 
$$ 
S(h(t))\equiv S(tE(1,S))\equiv \sum _l\gamma _{l1}t^lE(1,S)^l
\equiv \sum _l\gamma _{l1}t^l=S\,\op{mod}\,S^p\,,$$ 
and $h^{n+1}(t)\equiv tE(1,S)E(n,S)
\equiv tE(n+1,S)\,\op{mod}\,\m (\c K)S^p$ (use that $S(h(t))^p\equiv 0\,\op{mod}\,S^p$).  
\end{proof}

\subsection{Specification of lifts $h_{<p}$} \label{S3.2} 

Note that $h(t)=t\alpha ^{p^{M-1}}$, where $\alpha =E(1,S_0)^p$, and therefore, 
$h$ commutes with $\sigma $, cf. Remark in Subsection \ref{S1.1}. 
Now suppose that 
$h_{<p,0}\in\op{Aut}\,\c K_{<p}$ is a lift of $h_0$. 
Then Proposition \ref{P1.1} provides us with 
a unique $h _{<p}\in\op{Aut}\,O(\c K_{<p})$ such that 
$h_{<p}|_{O(\c K)}=h$ and $h _{<p}\,\op{mod}\,p=h_{<p,0}$. Therefore, 
we can work with arbitrary lifts $h_{<p,0}$ of $h_0$ by working with 
the appropriate lifts $h_{<p}$ of $h$. Note that all such lifts 
$h_{<p}$ commute with $\sigma $. 
 
A lift $h _{<p}$  
of $h$ can be specified by the formalism 
of nilpotent Artin-Schreier theory 
as follows.

Define similarly to \cite{Ab11} the continuous $W_M(k)$-linear operators 
$\c R,\c S:\c L_{\c K}\To \c L_{\c K}$ as follows. 

Suppose $\alpha\in\c L_{k}$. 

For $n>0$, set $\c R(t^n\alpha )=0$ and $\c S(t^n\alpha )=
-\sum _{i\geqslant 0}\sigma ^i(t^n\alpha )$.

For $n=0$, set $\c R(\alpha )=\alpha _0(\id _{\c L}\otimes\Tr)(\alpha )$, 
\ \ $\c S(\alpha )=\sum _{0\leqslant j<i<N_0}
\sigma ^j\alpha _0\,\sigma ^i\alpha $, where 
$\op{Tr}:W_M(k)\To W_M(k)$ is induced by 
the trace map in $k/\F _p$ and $\alpha _0\in W_M(k)$ with  
$\op{Tr}\alpha _0=1$ was fixed in Subsection \ref{S1.4}. 

For $n=-n_1p^m$, $\op{gcd}(n_1,p)=1$, set    
$\c R(t^n\alpha )=t^{-n_1}\sigma ^{-m}\alpha $ and 
$\c S(t^n\alpha )=\sum _{1\leqslant i\leqslant m}\sigma ^{-i}(t^n\alpha )$. 

Similarly to \cite{Ab11} we have the following lemma. 
(We use also the special case $\mathfrak M=\Z _p$ of 
Lemma \ref{L2.7}.)

\begin{Lem} \label{L3.2} 
For any $b\in\c L_{\c K}$,  

{\rm a)}\ $b=\c R(b)+(\sigma -\id _{\c L_{\c K}})\c S(b)$;
\medskip 

{\rm b)}\ if $b=b_1+\sigma c-c$, where 
$b_1\in\sum _{a\in\Z ^+(p)}t^{-a}\c L_{k}+\alpha _0\c L$ 
and $c\in\c L_{\c K}$ then $\c R(b)=b_1$ and $c-\c S(b)\in \c L$; 
\medskip 

{\rm c)}\ for any $n\geqslant 0$, $\c R$ and $\c S$ 
map $S^{-n}\c L_{\m (\c K)}$ to itself.  
\end{Lem}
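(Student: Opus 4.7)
The plan is to prove the three parts in sequence: (a) supplies the identity driving (b), while (c) is a separate $S$-filtration assertion requiring a more delicate argument.

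For (a), I would verify $b = \c R(b) + (\sigma - \id)\c S(b)$ on the topological $W_M(k)$-generators $t^n\alpha$, $\alpha \in \c L_k$, and extend by continuity. Three cases: for $n > 0$, the formula $(\sigma - \id)\bigl(-\sum_{i \geq 0}\sigma^i(t^n\alpha)\bigr) = t^n\alpha$ is a $t$-adically convergent telescope. For $n = -n_1p^m$ with $\gcd(n_1,p) = 1$, the finite telescope gives $t^n\alpha - \sigma^{-m}(t^n\alpha) = t^n\alpha - t^{-n_1}\sigma^{-m}\alpha = t^n\alpha - \c R(t^n\alpha)$. The $n = 0$ case requires a computation: expanding $(\sigma - \id)\sum_{0 \leq j < i < N_0}\sigma^j\alpha_0\cdot\sigma^i\alpha$ and shifting indices $(j,i) \mapsto (j+1, i+1)$, then using $\sigma^{N_0} = \id$ on $W_M(k)$ to identify $\sigma^{N_0}\alpha = \alpha$, the cross terms cancel and one gets $(1 - \alpha_0)\alpha - \alpha_0(\Tr\alpha - \alpha)$, which equals $\alpha - \alpha_0\Tr\alpha$ by $\Tr\alpha_0 = 1$.

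For (b), apply (a) to $b$ and subtract the hypothesis $b = b_1 + (\sigma - \id)c$ to obtain $\c R(b) - b_1 = (\sigma - \id)(c - \c S(b))$. Two direct verifications close the argument. First, $\c R$ acts as the identity on $\sum_{a \in \Z^+(p)}t^{-a}\c L_k + \alpha_0\c L$: for $t^{-a}\alpha$ with $a \in \Z^+(p)$ the defining formula with $n_1 = a$, $m = 0$ gives $\c R(t^{-a}\alpha) = t^{-a}\alpha$, and for $\alpha_0\ell$ with $\ell \in \c L$ we have $\c R(\alpha_0\ell) = \alpha_0\Tr(\alpha_0\ell) = \alpha_0\ell$ using $\Tr\alpha_0 = 1$. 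Second, $\c R$ annihilates $(\sigma - \id)\c L_{\c K}$, checked case-by-case on $(\sigma - \id)(t^n\alpha)$: for $n > 0$ both terms give zero, for $n = 0$ one uses $\Tr\circ\sigma = \Tr$, and for $n = -n_1p^m$ the computation $\c R(t^{-n_1p^{m+1}}\sigma\alpha) = t^{-n_1}\sigma^{-m}\alpha = \c R(t^{-n_1p^m}\alpha)$ forces cancellation. Applying $\c R$ to both sides of $\c R(b) - b_1 = (\sigma - \id)(\cdots)$ then yields $\c R(b) = b_1$, and $\ker(\sigma - \id|_{\c L_{\c K}}) = \c L$ gives $c - \c S(b) \in \c L$.

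For (c) — the genuine obstacle — I would first generalize Lemma 2.7 from $\mathfrak{M} = \Z_p$ to $\mathfrak{M} = \c L$ (the proof only uses the $\Z/p^M$-module structure of coefficients) to decompose any $l \in S^{-n}\c L_{\m(\c K)}$ as $l = l_* + (\sigma - \id)\tilde l$ with $\tilde l \in S^{-n}\c L_{\m(\c K)}$ and $l_* = \sum_{1 \leq m \leq n} S^{-m}t^bl_{(b,m)} + \alpha_0 l_{(0)}$, every summand of $l_*$ lying in $S^{-n}\c L_{\m(\c K)}$ (for $S^{-m}t^b$ use $m \le n$ and $t^b \in \m(\c K)$; for $\alpha_0 l_{(0)}$ use $S^n \in \m(\c K)$). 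Using the explicit expansion $S = t^{e^*}\eta_0 + pt^{e^*/p}\eta_1 + p^2\eta_2$ from Proposition 2.1(c) (with $\eta_0, \eta_1$ units and $e^* \equiv 0 \bmod p^M$), each $S^{-m}t^b$ expands $t$-adically with leading term $t^{-me^*+b}$ — an exponent coprime to $p$ since $e^* \equiv 0 \bmod p$ — converting $l_*$ into a sum of a term $b_1 \in \sum_{a \in \Z^+(p)}t^{-a}\c L_k + \alpha_0\c L$ plus a remainder in $\c L_{\m(\c K)} \subset S^{-n}\c L_{\m(\c K)}$. Part (b) then identifies $\c R(l) = b_1 \in S^{-n}\c L_{\m(\c K)}$ and $\c S(l) \equiv \tilde l \bmod \c L$, with the $\c L$-ambiguity absorbed by $\c L \subset S^{-n}\c L_{\m(\c K)}$ for $n \geq 1$; the $n = 0$ case is a direct check on $t^k\alpha$ with $k \geq 1$. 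The truly delicate piece — controlling the $S$-pole order through the $\sigma^{-i}$ appearing in $\c S$ on $t^{-n_1p^m}$-type terms — is handled via the identities $\sigma S' = S$ and $S = S'(p + S'')$ from Proposition 2.1(b), which ensure that partial inverse Frobenii applied to $S^{-n}$-multiples do not worsen the $S$-order.
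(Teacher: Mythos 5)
The paper gives no proof of this lemma (it defers to [Ab11] and notes only that the case $\mathfrak M=\Z_p$ of Lemma \ref{L2.7} is used), so your write-up is being judged on its own. Parts a) and b) are correct and complete: the three case-by-case telescopes, the index shift with $\sigma^{N_0}=\id$ for $n=0$, and the two facts that $\c R$ restricts to the identity on $\sum_{a}t^{-a}\c L_k+\alpha_0\c L$ and kills $(\sigma-\id)\c L_{\c K}$ are exactly what is needed, and your route through a) for b) is the natural one.

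In part c) the overall strategy (decompose $l\in S^{-n}\c L_{\m(\c K)}$ via Lemma \ref{L2.7}, then read off $\c R(l)$ and $\c S(l)$ from part b)) is the route the paper's parenthetical hint points to, but one step is justified by the wrong reason. You need that \emph{every} negative-degree monomial of the Laurent expansion of $S^{-m}t^b$ has exponent prime to $p$ (otherwise $\c R$ would act on such a monomial by $t^{-n_1p^{m'}}\alpha\mapsto t^{-n_1}\sigma^{-m'}\alpha$, and your decomposition of $l_*$ into $b_1+r$ with $b_1\in\sum_a t^{-a}\c L_k+\alpha_0\c L$ and $r\in\c L_{\m(\c K)}$ would simply be false). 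Knowing that the \emph{leading} exponent $b-me^*$ is prime to $p$ does not give this. The correct reason is $S=\sigma S'$, which forces $S\in W_M(k)((t^p))$, hence $S^{-m}\in W_M(k)((t^p))$ and all exponents of $S^{-m}t^b$ are $\equiv b\not\equiv 0\bmod p$; you cite $\sigma S'=S$ only at the end and for a different purpose. Two further one-line repairs: before invoking b) you must fold the remainder $r\in\c L_{\m(\c K)}$ into the $(\sigma-\id)$-part (apply a) to $r$, noting $\c R(r)=0$ and $\c S(r)\in\c L_{\m(\c K)}$, so $r=(\sigma-\id)\c S(r)$), since b) requires the exact shape $b=b_1+\sigma c-c$; and the "generalization" of Lemma \ref{L2.7} is unnecessary, as that lemma is already stated for an arbitrary $\Z_p$-module $\mathfrak M$. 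With these fixes the argument closes.
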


According to Subsection \ref{S1.6}, for the lift  
$h_{<p}\in\Aut\,O(\c K_{<p})$ of $h$ (which is attached to 
the lift $h _{<p,0}$ of $h_0$), we have  that    
$$h _{<p}(f)=c\circ (A\otimes\id _{O(\c K_{<p})})f\,.$$  
Here $c\in \c L_{\c K}$ and $A=\Ad\, h _{<p}\in\Aut\,\c L$ 
(cf. Subsection \ref{S1.6} for the definition of $\Ad\,h_{<p}$).  
Similarly to \cite{Ab11} it can be proved that the correspondence 
$h_{<p}\mapsto (c,A)$ is a bijection between the set of all lifts $h_{<p}$ of $h$ and all 
$(c,A)\in\c L_{\c K}\times\Aut\,\c L$ such that 
\begin{equation} \label{E3.1}
 (\id _{\c L}\otimes h)(e)\circ c=(\sigma c)\circ (A\otimes\id _{O(\c K}))(e)\,.
\end{equation} 
This allows us to specify a choice of $h _{<p}$ step by step 
proceeding from $h _{<p}\,\op{mod}\,C_s(\c L_{\c K_{<p}})$ to 
$h _{<p}\,\op{mod}\,C_{s+1}(\c L_{\c K_{<p}})$ 
where $1\leqslant s<p$, as follows.

Suppose $c$ and $A$ are already 
chosen modulo $s$-th commutators, i.e. we chose $(c_s,A_s)\in\c L_{\c K}\times\Aut\,\c L$ 
satisfying the relation \eqref{E3.1} modulo $C_s(\c L_{\c K})$. 

Then set $c_{s+1}=c_s+X$ and $A_{s+1}=A_s+\c A$, where 
$X\in C_s(\c L_{\c K})$ and $\c A\in\Hom (\c L, C_s(\c L))$.  
Then \eqref{E3.1} implies that (here $\c A_k=\c A\otimes W_M(k)$)
$$\sigma X-X+\sum _{a\in\Z ^0(p)}t^{-a}\c A_k(D_{a0})\equiv $$
\begin{equation}\label{E3.2} (\id _{\c L}\otimes h)e
\circ c_s-\sigma c_s\circ (A_s\otimes\id _{O(\c K)})
e\,\op{mod}\,C_{s+1}(\c L_{\c K})
\end{equation}

Now we can specify $c_{s+1}$ and $A_{s+1}$ by setting 
$X=\c S(B_s)$ and 
\linebreak 
$\sum _{a\in\Z ^0(p)}t^{-a}\c A_k(D_{a0})
=\c R(B_s)$, 
where $B_s$ is the right-hand side of the above recurrent    
relation. Note that the knowledge of all $\c A_k(D_{a0})$ recovers uniquely 
the values of $\c A$ on generators of $\c L$ and gives  well-defined 
$A_{s+1}\in\Aut\,\c L$. Clearly, $(c_{s+1}, A_{s+1})$ satisfies the relation 
\eqref{E3.1} modulo $C_{s+1}(\c L_{\c K})$. Finally, we obtain 
the solution $(c^0,A^0):=(c_p,A_p)$ of \eqref{E3.1} and can use it to 
specify uniquely the lift $h^0_{<p}$ of $h$. 

\subsection{The group $\wt{\c G}_{h}$} \label{S3.3} 
 
Consider the group of 
all continuous automorphisms of 
$\c K_{<p}$ such that 
their restriction to $\c K$ belongs to 
the closed subgroup 
in $\Aut\,\c K$ generated by $h _0$. These automorphisms 
admit unique lifts to automorphisms of $O(\c K_{<p})$ such that 
their restriction to $O(\c K)$ belongs to the subgroup $\langle h\rangle $ 
of $\Aut\,O(\c K)$ generated by $h$, cf. the beginning of 
Subsection \ref{S3.2}.  Denote the group of these 
lifts by  $\wt{\c G}_h$. 

Use the identification $\eta _M$ from Subsection \ref{S1.4} to 
obtain a natural 
short exact sequence of profinite $p$-groups 
\begin{equation} \label{E3.3}
1\To G(\c L)\To \wt{\c G}_{h}\To \langle h\rangle\To 1 
\end{equation} 
For any $s\geqslant 2$, the $s$-th commutator subgroup 
$C_s(\wt{\c G}_{h})$ is a normal subgroup in 
$G(\c L)$. Therefore, $\c L_h(s):=C_s(\wt{\c G}_{h})$ 
is a Lie subalgebra of $\c L$. 
Set $\c L_h(1)=\c L$. 
Clearly, for any $s_1,s_2\geqslant 1$,  
$[\c L_h(s_1),\c L_h(s_2)]\subset \c L_h(s_1+s_2)$, in other words, the filtration 
$\{\c L_h(s)\}_{s\geqslant 1}$ is  central.

\begin{Thm} \label{T3.3} 
 For all $s\in\N $, $\c L_h(s)=\c L(s)$. 
\end{Thm}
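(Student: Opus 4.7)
The plan is to establish equality via two complementary minimality characterisations. On one hand, being the lower central series of $\wt{\c G}_h$ (restricted to $\c L$), the filtration $\{\c L_h(s)\}$ is the minimal filtration on $\c L$ with $\c L_h(1)=\c L$ that is central and stable in the sense $(A-\id)\c L_h(s)\subset\c L_h(s+1)$, where $A=\Ad\,h_{<p}\in\Aut\,\c L$. On the other hand, by Corollary \ref{C2.5}, $\{\c L(s)\}$ is the minimal central filtration on $\c L$ with $e\in\c N$. The theorem therefore reduces to two claims: (i) $(A-\id)\c L(s)\subset\c L(s+1)$ for every $s\geqslant 1$, which by minimality of the lower central series forces $\c L_h(s)\subset\c L(s)$; and (ii) $e\in\c N_h:=\sum_{s\geqslant 1}S^{-s}\c L_h(s)_{\m (\c K)}$, which by Corollary \ref{C2.5} forces $\c L(s)\subset\c L_h(s)$.

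For (i), the key analytic input is the estimate $((\id\otimes h)-\id)\c N\subset S\c N$, which follows from $h(t)=tE(1,S)$ together with the identity $h(S)\equiv S\pmod{S^p}$ established in the proof of Proposition \ref{P3.1} via $l\gamma_{l,1}=0$. I work with the convenient form $e^{\dag}=\sum_{b,m}S^{-m}t^bV_{(b,m)}+\alpha_0V_{(0)}\in\c N$ from Proposition \ref{P2.6}, related to $e$ by conjugation with $x\in S\c N$. Transferring \eqref{E3.1} to $e^{\dag}$ and running the step-by-step recursion of Subsection \ref{S3.2}, Lemma \ref{L3.2}(c) ensures that the corrections $c_s\mapsto c_s+\c S(B_s)$ remain within $S\c N$, yielding in the limit $((A\otimes\id)-\id)e^{\dag}\in S\c N$. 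Reading off coefficients against the basis $\{S^{-m}t^b,\alpha_0\}$ of $O(\c K)/(\sigma-\id)O(\c K)$ from Subsection \ref{S2.5} then produces $(A-\id)V_{(b,m),i}\in\c L(m+1)$ and $(A-\id)V_{(0)}\in\c L(2)$. Since $\c L$ is free on $\{V_{(b,m),i}\}\cup\{V_{(0)}\}$ with weights matching $\{\c L(s)\}$ (Corollary \ref{C2.7}), and $A$ is a Lie algebra automorphism, the identity $(A-\id)[x,y]=[(A-\id)x,y]+[x,(A-\id)y]+[(A-\id)x,(A-\id)y]$ propagates this to $(A-\id)\c L(s)\subset\c L(s+1)$ for every $s$.

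For (ii), Proposition \ref{P2.2} together with Lemmas \ref{L2.3}--\ref{L2.4} reduces $e\in\c N_h$ to the containments $(\c K^*/\c K^{*p^M})^{(s+1)}\subset\c L_h(s+1)$ for every $s\geqslant 1$. By Proposition \ref{P1.2}, $\eta_M^{\,\op{ab}}$ identifies $\c L^{\op{ab}}$ with $\c K^*/\c K^{*p^M}$ as inverse to the reciprocity map; by functoriality (Subsection \ref{S1.6}), $A$ acts on $\c L^{\op{ab}}$ via the same formulae as $h_0$ on $\c K^*/\c K^{*p^M}$. Using $h_0(t_0)=t_0E(1,S\,\op{mod}\,p)$, the equality $(\c K^*/\c K^{*p^M})^{(r+1)}=(\op{Im}\,\iota)S^r$, and $h(S)\equiv S\pmod{S^p}$, a first-order computation shows that $h_0-\id$ sends $(\c K^*/\c K^{*p^M})^{(r)}$ into $(\c K^*/\c K^{*p^M})^{(r+1)}$ and induces an isomorphism on the associated graded (the essential point being that the index $a\in\Z^+(p)$ of the generator $t^a$ is a unit). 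The $s$-fold iterated commutator $[[\dots[\tau,h_{<p}],\dots],h_{<p}]\in\c L_h(s+1)$ corresponds via $\eta_M^{\,\op{ab}}$ to $(A-\id)^s\tau$ modulo higher commutators, and as $\tau$ varies it realises every element of $(\c K^*/\c K^{*p^M})^{(s+1)}$ modulo $(\c K^*/\c K^{*p^M})^{(s+2)}$ inside $\c L_h(s+1)$. An ascending induction on $s$ combined with the profinite vanishing $\bigcap_r(\c K^*/\c K^{*p^M})^{(r)}=0$ yields the required containment.

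The main obstacle is step (i): controlling the $p-1$ stages of the recursive construction of $A$ while simultaneously tracking the $S$-adic filtration on $\c L_{\c K}$, the commutator series of $\c L$, and the weight filtration $\{\c L(s)\}$. The bookkeeping that keeps the correction $\c S(B_s)$ inside $S\c N$ at each stage rests crucially on Lemma \ref{L3.2}(c) together with the persistence of $((\id\otimes h)-\id)e^{\dag}\in S\c N$ throughout the recursion.
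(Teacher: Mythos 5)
Your argument is correct and follows essentially the same route as the paper: both reduce the theorem to the two facts that $\op{Ad}h_{<p}-\id$ raises the filtration $\{\c L(s)\}$ by one step (giving $\c L_h(s)\subset\c L(s)$) and that it maps the graded pieces of $(\c K^*/\c K^{*p^M})^{(s)}$ onto those of $(\c K^*/\c K^{*p^M})^{(s+1)}$ because the leading coefficient $b$ (resp.\ $a$) is a unit mod $p$ (giving the reverse inclusion by the same interleaved induction) --- exactly the content of the paper's Lemma \ref{L3.4} and its consequences (a), (b). The only cosmetic difference is that you extract the surjectivity half from the action of $h_0$ on $\c K^*/\c K^{*p^M}$ through the abelianized identification of Proposition \ref{P1.2}, whereas the paper reads both halves off the single congruence $\wt{V}_{(b,m),i}\equiv V_{(b,m),i}+bV_{(b,m+1),i}$ obtained by applying $\c R$ to the recursion for $e^{\dag}$.
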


\begin{proof} Use the notation from Subsection \ref{S2.5}. 
Obviously, we have:  
\medskip 

$\bullet $\ $\c L(s+1)=\left (\c K^*/\c K^{*p^M}\right )^{(s+1)}
+\c L(s+1)\cap C_2(\c L)$, 
where the $W_M(k)$-module $\left (\c K^*/\c K^{*p^M}\right )^{(s+1)}$ 
is generated by all $V_{(b,m)}$ with $m\geqslant s+1$ 
(for the definition of $V_{(b.m)}$ cf. Proposition \ref{P2.6}) and 
$\c L(s+1)\cap C_2(\c L)=\sum _{s_1+s_2=s+1} \left [\c L(s_1),\c L(s_2)\right ]$;
\medskip 

$\bullet $\ $\c L_h(s+1)$ is the ideal in $\c L$ generated by  
$[\c L_h(s),\c L]$ and all elements of the form 
$(\op{Ad}h_{<p})l\circ (-l)$, where $l\in \c L_h(s)$ and $h_{<p}$ is a lift of $h$. 
\medskip

Consider the elements $V_{(0)}$ and  $V_{(b,m),i}$ introduced in the end of 
Section \ref{S2}). Recall that $m\in\N $, $1\leqslant b<e^*$ and 
$\op{gcd}(b,p)=1$. 

\begin{Lem} \label{L3.4} 
 There is a lift $h^1_{<p}$ such that if  
 $(\op{Ad}h^1_{<p})V_{(0)}=\wt{V}_{(0)}$ and for all $b,m,i$, 
$(\op{Ad}h^1_{<p})V_{(b,m),i}=\wt{V}_{(b,m),i}$ then 
\medskip 

{\rm a)}\ $\wt{V}_{(0)}\equiv V_{(0)}\,\op{mod}\ C_2(\c L)$;
\medskip  
 
{\rm b)}\ $\wt{V}_{(b,m),i}\equiv V_{(b,m),i}+bV_{(b,m+1),i}
\,\op{mod}\,(\c L(m+2)+\c L(m+1)\cap C_2(\c L))$. 
\end{Lem} 

We shall prove this Lemma below. 

Note the following immediate applications of this lemma: 
\medskip 

(a)\ {\it if $l\in \c L(s)$ then $(\op{Ad}h^1_{<p})l\circ (-l)\in \c L(s+1)$};
\medskip 

(b)\ {\it if $l\in \left (\c K^*/\c K^{*p^M}\right )^{(s+1)}$ then there is an 
$l'\in \left (\c K^*/\c K^{*p^M}\right )^{(s)}$ such that} 
$(\op{Ad}h^1_{<p})l'\circ (-l')\equiv l\,\op{mod}\,\c L(s+1)\cap C_2(\c L)$. 
\medskip 

Now we can finish the proof of our theorem. 

Clearly, $\c L_h(1)=\c L(1)$. 

Suppose $s_0\geqslant 1$ and for $1\leqslant s\leqslant s_0$, we have 
$\c L_h(s)=\c L(s)$. 

Then $[\c L_h(s_0),\c L]=[\c L(s_0), \c L(1)]\subset \c L(s_0+1)$ 
and applying (a) we obtain that $\c L_h(s_0+1)\subset \c L(s_0+1)$. 

In the opposite direction, note that by inductive assumption, 

$$\c L(s_0+1)\cap C_2(\c L)=\sum _{s_1+s_2=s_0+1}
\left [\c L_h(s_1),\c L_h(s_2)\right ]\subset \c L_h(s_0+1)$$
and then from (b) we obtain that $\left (\c K^*/\c K^{*p^M}\right )^{(s_0+1)}
\subset \c L_h(s_0+1)$. So, $\c L(s_0+1)\subset\c L_h(s_0+1)$.  
The theorem is completely proved. 
\end{proof}

\begin{proof} [Proof of Lemma \ref{L3.4}] 

Let 
$$\tilde e^{\dag}:=(\op{Ad}h^1 _{<p} \otimes\id _{O(\c K)})e^{\dag }
=\sum _{i,b,m}\frac{t^b}{S^m}\beta _i
\wt{V}_{(b,m),i}+\alpha _{(0)}\wt{V}_{(0)}\, .$$
Similarly to  Subsection \ref{S3.2} there is $c^1\in\c L_{\c K}$ such that  
\begin{equation} \label{E3d}
(\id _{\c L}\otimes h)e^{\dag }\circ c^1=(\sigma c^1)\circ \tilde{e}^{\dag }\,.
\end{equation} 
and the choice of $h^1_{<p}$ can be specified by an analog of the recurrent 
procedure from the end of Subsection \ref{S3.2}. 

Namely, set $c^1_1=0$ and $A^1_1=\id _{\c L}$. 
Then for $1\leqslant s< p$, $(c_{s+1}^1,A^1_{s+1})$ can be defined as follows:
\medskip 

$\bullet $\ $B_s=(\id _{\c L}\otimes h)e^{\dag }\circ c_s^1-
(\sigma c^1_s)\circ (A_s^1\otimes\id _{\c K})e^{\dag }$
\medskip 

$\bullet $\ $X_s=\c S(B_s)$, $(\c A_s\otimes\id _{\c K})e^{\dag }=\c R(B_s)$;
\medskip 

$\bullet $\    $c^1_{s+1}=c^1_s+X_s$, $A^1_{s+1}=A^1_s+\c A_s$
\medskip 

This gives  
the system of compatible on $1\leqslant s\leqslant p$ solutions 
$(c^1_s, A^1_s)\in\c L_{\c K}\times\Aut\,\c L$ of \eqref{E3d} 
modulo $C_s(\c L_{\c K})$ and $(c^1,A^1):=(c_p^1, A^1_p)$  
defines $h^1_{<p}$.

Let 
$$\wt{\c N}^{(2)}:=\sum _{i\geqslant 2}S^{-i}(\c L(i)\cap C_2(\c L))_{\m (\c K)}
\subset \c N^{(2)}\,.$$ 
Note that   
$\left [\c N, \c N\right ]\subset \wt{\c N}^{(2)}$. Consider the following properties. 
\medskip 

a) $(\id _{\c L}\otimes h)(e^{\dag })=e^{\dag }+e_1^++e_1^-\,\op{mod}\,S^2\c N$, where 
$e_1^+,e_1^-\in S\c N$ and 
$$e_1^-=\sum _{i,b,m}\frac{bt^b}{S^m}\beta _iV_{(b,m+1),i},\ \ e_1^+=\sum _{b,i}bt^b\beta _iV_{(b,1),i}$$
(use that $h(S)\equiv S(h(t))\equiv S\,\op{mod}\,S^p$, cf. the proof of Proposition \ref{P3.1}).
\medskip 

b) $\tilde{e}^{\dag }\equiv e^{\dag }\,\op{mod}\,S\c N$ 
and $c^1\in S\c N$ (use that for all $s$, $B_s\in S\c N$ and $\c R$ and $\c S$ map $S\c N$ to itself). 
\medskip 

c) $(-\sigma c^1)\circ (\id _{\c L}\otimes h)(e^{\dag })\circ c^1
\equiv (c^1-\sigma c^1)+e^{\dag }+e_1^{\dag }\,\op{mod}\,
S^2\c N+S\wt{\c N}^{(2)}$ 
(use that $c\in S\c N$ and $(\id _{\c L}\otimes h)(e^{\dag })\in \c N$)
\medskip 

d) Apply $\c R$ to the congruence from c), use that $S^2\c N+S\wt{\c N}^{(2)}$ is mapped by $\c R$ to itself and  
$\c R(c^1-\sigma c^1)=\c R(e_1^+)=0$
 
$$\tilde{e}^{\dag }\equiv \sum _{i,b,m}\frac{t^b}{S^m}\beta _i\left (V_{(b,m),i}+
bV_{(b,m+1),i}\right )+\alpha _0V_{(0)}\,
\op{mod}\,S^2\c N+S\wt{\c N}^{(2)}\,.$$
\medskip 

It remains to note that the last congruence is equivalent to the statement of our lemma.
\end{proof}

\subsection{The group $\c G_{h}$} \label{S3.4} 

Let $\c G_{h}=\wt{\c G}_{h}/\wt{\c G}^{p^M}_{h}C_p(\wt{\c G}_{h})$. 

\begin{Prop} \label{P3.5} 
Exact sequence \eqref{E3.3} induces the following 
exact sequence of $p$-groups 
\begin{equation} \label{E3.5} 
1\To G(\c L)/G(\c L(p))\To \c G_{h}\To \langle h\rangle \,\op{mod}\,
\langle h^{p^M}\rangle \To 1
\end{equation}
 \end{Prop}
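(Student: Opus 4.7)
The plan is to compute, using the exact sequence \eqref{E3.3}, the image of $N := \wt{\c G}_h^{p^M}C_p(\wt{\c G}_h)$ in each of its two outer terms. By Theorem \ref{T3.3} with $s=p$ we have $\c L_h(p)=\c L(p)$, i.e.\ $C_p(\wt{\c G}_h)=G(\c L(p))$, so the commutator part of $N$ is already contained in $G(\c L)$. Exactness of \eqref{E3.5} therefore reduces to two claims: (i) the image of $N$ in $\langle h\rangle$ equals $\langle h^{p^M}\rangle$; and (ii) $G(\c L)\cap N=G(\c L(p))$. Granted these, a routine diagram chase on \eqref{E3.3} assembles them into \eqref{E3.5}.

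Claim (i) is immediate: $C_p(\wt{\c G}_h)$ is killed in the abelian quotient $\langle h\rangle$, while $\wt{\c G}_h^{p^M}$ surjects onto the closed subgroup generated by $p^M$-th powers in $\langle h\rangle\simeq\Z_p$, which is exactly $\langle h^{p^M}\rangle$. For claim (ii), the inclusion $\supset$ is clear. For the reverse, pass to the quotient $\wt{\c G}_h/C_p(\wt{\c G}_h)$, a profinite pro-$p$-group of nilpotent class $<p$. Via the Campbell--Hausdorff/Lazard equivalence, now applied over $\Z_p$ rather than $\Z/p^M$ as in Subsection \ref{S1.2}, this quotient corresponds to a Lie $\Z_p$-algebra $\wt L$ fitting in the exact sequence
$$0\To\c L/\c L(p)\To\wt L\To\Z_p\To 0,$$
in which the sub-algebra is $p^M$-torsion and the quotient is identified with $\langle h\rangle$. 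Because in a pro-$p$ group of class $<p$ the $n$-th power operation corresponds under BCH to multiplication by $n$ on the Lie algebra, the image of $N$ in $\wt L$ is the $\Z_p$-submodule $p^M\wt L$.

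It remains to check $p^M\wt L\cap(\c L/\c L(p))=0$. If $\ell=p^M\tilde\ell$ with $\ell\in\c L/\c L(p)$, project $\tilde\ell$ to $t\in\Z_p$; then $p^Mt=0$ in $\Z_p$ forces $t=0$, so $\tilde\ell\in\c L/\c L(p)$, and the $p^M$-torsion of this submodule yields $\ell=p^M\tilde\ell=0$. The subtle points worth flagging are two: first, the invocation of BCH over $\Z_p$ instead of $\Z/p^M$ is essential but goes slightly beyond the formulation in Subsection \ref{S1.2}, although it is standard Lazard material; second, the argument uses that $\langle h\rangle\simeq\Z_p$ is torsion-free, so one should check that no nontrivial topological power of $h$ acts as the identity on $\c K$, which follows from the explicit congruence in Proposition \ref{P3.1}.
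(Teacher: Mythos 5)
Your reduction of \eqref{E3.5} to the two claims is sound, and claim (i), together with the identification $C_p(\wt{\c G}_h)=G(\c L(p))$ from Theorem \ref{T3.3}, is fine. The gap is in claim (ii), and it sits exactly at the point you flag at the end: the final step $p^M\wt L\cap(\c L/\c L(p))=0$ uses in an essential way that the quotient $\wt L/(\c L/\c L(p))\simeq\langle h\rangle$ is torsion-free, i.e.\ that no nontrivial $p$-power of $h$ acts as the identity on $O(\c K)$. This does \emph{not} follow from Proposition \ref{P3.1}: that proposition only controls $h^n(t)$ modulo $S^p\m(\c K)$, and modulo $S^p$ one has $E(p^M,S)=\op{exp}(p^MS+p^{M-1}S^p+\dots)\equiv 1$, since $p^MS=0$ in $W_M(k)[[t]]$ and every remaining term of the Artin--Hasse exponent lies in $(S^p)$. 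So Proposition \ref{P3.1} is consistent with $h^{p^M}=\id$. If $\langle h\rangle$ were finite, say of order $p^{M_0}$ with $M_0\geqslant M$, your argument breaks: a lift $\tilde\ell\in\wt L$ of a generator of the quotient gives $p^{M_0}\tilde\ell=p^M\bigl(p^{M_0-M}\tilde\ell\bigr)\in p^M\wt L\cap(\c L/\c L(p))$, which need not vanish; on the group side this is the element $h_{<p}^{p^{M_0}}=(h_{<p}^{p^{M_0-M}})^{p^M}\in\wt{\c G}_h^{p^M}\cap G(\c L)$, which nothing in your argument forces into $G(\c L(p))$.

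This is not a removable formality: the paper's proof is organized precisely so as to avoid ever deciding whether $\langle h\rangle\simeq\Z_p$. It identifies $N=\wt{\c G}_h^{p^M}C_p(\wt{\c G}_h)$ with the stabilizer $\c H$ of $\bar f=f\,\op{mod}\,S^{p-1}\c M_{<p}$, and the two substantive inputs are: (1) Lemma \ref{L3.6} (via the auxiliary semidirect product $\wt{\fr M}$), which shows that the cocycle $c^0\circ(A^0\otimes h^{-1})c^0\circ\dots\circ(A^0\otimes h^{-1})^{p^M-1}c^0$ lies in $S^{p-1}\c M$ and hence $h_{<p}^{0p^M}\in\c H$ — this is what yields $N\cap G(\c L)\subset G(\c L(p))$, i.e.\ your claim (ii); and (2) the computation with the coefficients $E(bm,S)$ of $V_{(b,p-2),i}$, showing $h_{<p}^m l\in\c H$ forces $m\equiv 0\,\op{mod}\,p^M$. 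These explicit estimates modulo $S^{p-1}\c M$ are exactly what substitutes for the torsion-freeness you assume. To complete your route you would need an independent proof that the closed subgroup $\langle h\rangle\subset\Aut\,O(\c K)$ is isomorphic to $\Z_p$ — a nontrivial statement about an automorphism of $W_M(k)((t))$ of Nottingham type — or else reintroduce a computation of the paper's kind.
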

 
 \begin{proof}  Set 

 $$\c M:=\c N+\c L(p)_{\c K}=\sum _{1\leqslant s<p}
 S^{-s}\c L(s)_{\m (\c K)}+\c L(p)_{\c K}$$

 $$\c M_{<p}:=\sum _{1\leqslant s<p}
 S^{-s}\c L(s)_{\m (\c K_{<p})}+\c L(p)_{\c K_{<p}}$$  
 where $\m (\c K_{<p})=W_M(\m _{<p})
\cap O(\c K_{<p})$ and $\m _{<p}$  is the maximal ideal 
of the valuation ring of $\c K_{<p}$.  
 
 Then $\c M$ has the induced structure of Lie 
$W_M(k)$-algebra (use the Lie bracket from $\c L_{\c K}$) 
and $S^{p-1}\c M$ is an ideal 
 in $\c M$. Similarly, 
 $\c M_{<p}$ is a Lie $W_M(k)$-algebra (containing $\c M$ as its subalgebra)  
 and $S^{p-1}\c M_{<p}$ is an  ideal in $\c M_{<p}$. 
 Note that $e\in\c M$, $f\in\c M_{<p}$, 
 $S^{p-1}\c M_{<p}\cap \c M=S^{p-1}\c M$, and we have 
 a natural embedding of $\bar{\c M}:=
\c M/S^{p-1}\c M$ 
 into $\bar{\c M}_{<p}:=
\c M_{<p}/S^{p-1}\c M_{<p}$. 
For $i\geqslant 0$, we have also  
$(\id _{\c L}\otimes h-\id _{\c M})^i\c M\subset S^i\c M$.  
 
 Consider the orbit of $\bar f:=f\,\op{mod}
 \,S^{p-1}\c M_{<p}$ with 
 respect to the natural action of $\wt{\c G}_{h}
\subset \Aut \,O(\c K_{<p})$ 
 on $\bar{\c M}_{<p}$. Prove that the stabilizer $\c H$ of $\bar f$ equals 
 $\wt{\c G}_{h}^{p^M}C_p(\wt{\c G}_{h})$.

 If $l\in G(\c L)$ then $\eta _M^{-1}(l)\in\c G_{<p}$ sends $f$ to $f\circ l$.   
This means that for  
$l\in\c L\cap\c H$ we have  
$$l\in S^{p-1}\c M_{<p}\cap \c L=S^{p-1}\c M\cap \c L
=\c L(p)_{\c K}\cap \c L=\c L(p)= C_p(\wt{\c G}_h)\, .$$ 
Therefore, 
$\c H\cap G(\c L)=C_p(\wt{\c G}_{h})\subset \c H$ and we obtain the embedding 
$$\kappa : G(\c L)/G(\c L(p))\longrightarrow \wt{\c G}_h/\c H\, .$$  

Now consider the lift $h^0_{<p}$ from the end of Subsection \ref{S3.2}.

Note that $\wt{\c G}_{h}^{p^M}\,\op{mod}\,C_p(\wt{\c G}_{h})$ 
 is generated by $h _{<p}^{0p^M}$. Indeed, any finite $p$-group of nilpotent class $<p$ 
is $P$-regular, cf. \cite{Ha} Subsection 12.3. In particular, 
for any $g\in G(\c L)$, 
$(h^0_{<p}\circ g)^{p^M}\equiv h^{0p^M}_{<p}\circ g'\,\op{mod}\,C_p(\wt{\c G}_h)\,,$
where $g'$ is the product of $p^M$-th powers of elements 
from $G(\c L)$, but $G(\c L)$ has period $p^M$. 

As earlier, $h^0_{<p}f=c^0\circ (A^0\otimes\id _{\c K})f$. 
Note that $c^0\in S\c M$ (proceed similarly to the proof of Lemma \ref{L3.4}, step b)). 
 
 Then  \ \  
 $h_{<p}^{0p^M}(f)=$ 
$$(\id\otimes h) ^{p^M-1}\left (c^0\circ (A^0\otimes h^{-1})c^0\circ 
 \dots \circ (A^0\otimes h^{-1})^{p^M-1}c^0\right )\circ (A^{0p^M}\otimes\id )f\, .$$
 Clearly, $(A^0-\id _{\c L})^{p}\c L\subset\c L(p)$ and, therefore, 
 $(A^{0p^M}\otimes\id )\bar f=\bar f$. 

Similarly, $B=A^0\otimes h^{-1}$ is an automorphism of the Lie algebra $\c M$, 
and for all $s\geqslant 0$, 
 $(B-\id _{\c M})(S^s\c M)\subset 
 S^{s+1}\c M$.

\begin{Lem} \label{L3.6} 
 For any $m\in\c S\c M$, $m\circ B(m)\circ\dots \circ B^{p^M-1}m\in\c S^p\c M$. 
\end{Lem}

\begin{proof} Consider the Lie algebra $\mathfrak{M}=S\c M/S^p\c M$ with 
the filtration $\{\mathfrak{M}(i)\}_{i\geqslant 1}$ induced by the filtration 
$\{S^i\c M\}_{i\geqslant 1}$. 
This filtration is central, i.e. for any $i,j\geqslant 1$, 
$[\mathfrak{M}(i), \mathfrak{M}(j)]\subset\mathfrak{M}(i+j)$. 
In particular, the nilpotent class of $\mathfrak{M}$ is $<p$. 

The operator $B$ induces the operator on $\mathfrak{M}$ which we denote also by $B$. 
Clearly, $B=\wt{\exp}\,\c B$ where $\c B$ is a differentiation on $\mathfrak{M}$ such that 
for all $i\geqslant 1$, $\c B(\mathfrak{M}(i))\subset \mathfrak{M}(i+1)$. 

Let $\wt{\mathfrak M}$ be a semi-direct product of $\mathfrak{M}$ and the trivial 
Lie algebra $(\Z /p^M)w$ via $\c B$. This means 
that $\wt{\fr{M}}=\fr{M}\oplus (\Z /p^M)w$ as $\Z /p^M$-module, 
$\mathfrak{M}$ and $(\Z /p^M)w$ are Lie subalgebras of 
$\wt{\mathfrak{M}}$ and for any $m\in\mathfrak{M}$, $[m,w]=\c B(m)$. 
Clearly, 
$C_2(\wt{\mathfrak{M}})=[\wt{\mathfrak{M}},\wt{\mathfrak{M}}]\subset \mathfrak{M}(2)$.  
This implies that   $\wt{\mathfrak{M}}$ 
has nilpotent class $<p$ and we can consider the 
$p$-group $G(\wt{\mathfrak{M}})$. This group has nilpotent class $<p$ and period $p^M$ 
(because for any $\bar m\in\wt{\mathfrak{M}}$, its $p^M$-th power in $G(\wt{\mathfrak{M}})$ 
equals $p^M\bar m=0$). 

Note that the conjugation by $w$ in $G(\wt{\mathfrak{M}})$ is given by the automorphism 
$\wt{\exp}\,\c B=B$. Indeed, 
if $m\in\mathfrak{M}$ then 
$$B(m)=(\wt{\exp}\c B)m=\sum _{0\leqslant n<p}\c B^n(m)/n!=(-w)\circ m\circ w $$
(use very well-known formula in a free associative algebra $\Q [[X,Y]]$, 
$$\exp (-Y)\exp (X)\exp (Y)=\exp (X+\ldots +(\ad\,^nY)X/n!+\ldots )\, ,$$
where $\ad\,Y:X\mapsto [X,Y]$). 

In particular, for any element   
$\bar m=m\,\op{mod}\,\c N(p)\in \mathfrak{M}$, 
we have $w_1\circ \bar m=B(\bar m)\circ w_1$, where $w_1=-w$. Therefore, 
$0=(\bar m\circ w_1)^{p^M}=\bar m\circ B(\bar m)\circ\dots \circ B^{p^M-1}(\bar m)\circ w_1^{p^M}$, 
and it remains to note that $w_1^{p^M}=0$. 
\end{proof} 

Applying the above Lemma we obtain that 
$$c^0\circ (A^0\otimes h^{-1})c^0\circ 
 \dots \circ (A^0\otimes h^{-1})^{p^M-1}c^0\in\c N(p)\subset S^{p-1}\c M$$ 
and, therefore, 
$h^{0p^M}_{<p}(\bar f)=0$.  

Thus, we proved that $\wt{\c G}^{p^M}_{h}C_p(\wt{\c G}_{h})\subset\c H$. 
\medskip

Suppose $g=h_{<p}^m\,l\in\c H$ with some $l\in G(\c L)$. Then 
$g(f)=b\circ f$ where $b\in S^{p-1}\c M_{<p}$. Note that  
$\sigma (b)\in  S^{p-1}\c M_{<p}$. Then  
$$g(e)\circ b\circ f=g(e)\circ g(f)=g(\sigma f)=
\sigma b\circ \sigma f=\sigma b\circ e\circ f$$ 
implies that $g(e)\equiv e\,\op{mod}\, S^{p-1}\c M$. 
Thus $(\id\otimes h)^m(e)\equiv e\,\op{mod}\,S^{p-1}\c M$. 

Now use that $e\equiv e^{\dag }\,\op{mod}\,
\c L_{\m(\c K)}+C_2(\c L)_{\c K}$, cf. the beginning 
of the proof of Proposition \ref{P2.6}. 

Clearly, $\c L_{\m (\c K)}+\c L(p)_{\c K}\supset S^{p-1}\c M$ 
and, therefore, for the element 
$$e^{\dag }_{<p}:=\sum _{i,b}\sum _{1\leqslant m<p}
\frac{t^b}{S^m}\beta _iV_{(b,m),i}$$ 
we obtain 
 $(\id _{\c L}\otimes h)^m(e^{\dag }_{<p})
\equiv e^{\dag }_{<p}\,\op{mod}\,\c L_{\m (\c K)}+C_2(\c L_{\c K})$. 
But 
$$h^m(e^{\dag }_{<p})\equiv \sum _{i,b}\sum _{1\leqslant m<p}
\frac{t^bE(bm, S)}{S^m}\beta _iV_{(b,m),i}\,\op{mod}\,\c L_{\m (\c K)}
+\c L(p)_{\c K}$$
Now following the coefficients for $V_{(b,p-2),i}$ we obtain  
$m\equiv 0\,\op{mod}\,p^M$. Therefore, $l\in\c H\cap G(\c L)=C_p(\wt{\c G}_h)$ 
and $\c H\subset \wt{\c G}^{p^M}_{h}C_p(\wt{\c G}_{h})$. 

Finally, we have $\wt{\c G}_h/\c H=\c G_h$,  
$\c H\,\op{mod}\,C_p(\wt{\c G}_h)=\langle h_{<p}^{p^M}\rangle $ and, therefore, 
$\op{Coker}\,\kappa =\langle h\rangle \,\op{mod}\,\langle h^{p^M}\rangle $. 
\end{proof} 

\begin{Cor} \label{C3.7} 
If $L_h$ is a Lie $\Z /p^M$ algebra such that $\c G_{h}=G(L_h)$ then 
\eqref{E3.5} induces 
the following short exact sequence of Lie $\Z /p^M$-algebras 
$$0\To \c L/\c L(p)\To L_h\To (\Z /p^M)h\To 0$$
\end{Cor}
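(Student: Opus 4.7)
The plan is to translate the short exact sequence of pro-$p$-groups in Proposition \ref{P3.5} into a short exact sequence of Lie $\Z/p^M$-algebras via the Lazard/Campbell-Hausdorff equivalence recalled in Subsection \ref{S1.2}. By its very construction, $\c G_h = \wt{\c G}_h/\wt{\c G}_h^{p^M}C_p(\wt{\c G}_h)$ is a pro-$p$-group of period $p^M$ and nilpotent class $<p$, so this equivalence indeed produces a Lie $\Z/p^M$-algebra $L_h$ with $\c G_h=G(L_h)$, proving that $L_h$ as in the hypothesis exists. What remains is to reinterpret each outer term of \eqref{E3.5} as $G$ applied to a Lie algebra.

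For the kernel: since $\{\c L(s)\}_{s\geqslant 1}$ is a central filtration (Subsection \ref{S2.3}), $\c L(p)$ is an ideal of $\c L$, both $\c L$ and $\c L/\c L(p)$ are of nilpotent class $<p$, and the Campbell-Hausdorff operation passes to quotients by ideals; hence $G(\c L)/G(\c L(p))=G(\c L/\c L(p))$. For the cokernel, $\langle h\rangle\,\op{mod}\,\langle h^{p^M}\rangle$ is cyclic of order $p^M$, which is $G$ applied to the abelian (hence trivially nilpotent of class $1<p$) Lie algebra $(\Z/p^M)h$. Under the equivalence a normal subgroup of $G(L_h)$ corresponds to an ideal of $L_h$, so the injection in \eqref{E3.5} corresponds to an embedding of $\c L/\c L(p)$ as an ideal of $L_h$; the quotient $L_h/(\c L/\c L(p))$ then corresponds to $\c G_h/(G(\c L)/G(\c L(p)))=\langle h\rangle\,\op{mod}\,\langle h^{p^M}\rangle$ and is therefore isomorphic to $(\Z/p^M)h$.

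The only (minor) obstacle is the categorical fact that the equivalence $L\mapsto G(L)$ is exact --- it sends Lie ideals to normal subgroups and quotient algebras to quotient groups. This is standard for the nilpotent Lazard correspondence: conjugation in $G(L)$ is computed by $\wt{\exp}(\op{ad})$ acting on $L$, so $G(I)\lhd G(L)$ for any ideal $I$; and the Campbell-Hausdorff series is well-defined modulo any ideal contained in the nilpotent-class-$<p$ range. Once this is in place, applying $G^{-1}$ term by term to \eqref{E3.5} yields the desired short exact sequence
$$0\To \c L/\c L(p)\To L_h\To (\Z/p^M)h\To 0,$$
completing the argument.
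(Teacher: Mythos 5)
Your proposal is correct and matches the paper's (implicit) argument: the corollary is stated without proof precisely because it is the image of the exact sequence \eqref{E3.5} under the Lazard/Campbell--Hausdorff equivalence of Subsection \ref{S1.2}, using $G(\c L)/G(\c L(p))=G(\c L/\c L(p))$ and the identification of the cyclic quotient with $G((\Z/p^M)h)$, exactly as you write.
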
 

\begin{remark}
 In \cite{Ab11} we studied the structure of 
the above Lie algebra $L_h$ in the case $M=1$. 
The case of arbitrary $M$ will be considered in a forthcoming paper. 
\end{remark}

\subsection{Ramification estimates} \label{S3.5} 
Use the identification from Subsection \ref{S1.3},  
$\eta _M:\Gal (\c K_{<p}/\c K)=\c G_{<p}\simeq G(\c L)$ and set for all 
for $s\in\N $, $\c K[s,M]:=\c K_{<p}^{G(\c L(s+1))}$. 
Denote by 
$v(s,M)$ the maximal upper ramification number of the extension 
$\c K[s,M]/\c K$. In other words, 
$$v(s,M)=\max\{v\ |\ \c G_{<p}^{(v)} \text{ acts non-trivially 
on }\c K[s,M]\}\,.$$

\begin{Prop} \label{P3.8}
For all $s\in\N $, $v(s,M)=p^{M-1}(e^*s-1)$ 
(for the definition of $e^*$ cf, Subsection \ref{S2.1}).  
\end{Prop}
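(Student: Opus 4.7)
The plan is to exploit the two characterisations of ideals of $\c L$ at hand: Subsection~\ref{S1.7} describes $\c L^{(v)}$ as the minimal ideal containing $\c F^0_{\gamma,-N}$ for every $\gamma\geqslant v$ (with $N$ sufficiently large), and Corollary~\ref{C2.7} identifies $\c L(s+1)$ with the ideal of elements of weight $\geqslant s+1$. Combining these gives
\[
v(s,M) \;=\; \sup\bigl\{\gamma\geqslant 0\ :\ \c F^0_{\gamma,-N}\notin\c L(s+1)_k\bigr\},
\]
so the task reduces to a weighted optimisation over the commutators in the sum defining $\c F^0_{\gamma,-N}$.

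The preparatory step is to compute the weight $w(D_{a0})$. For $a\in\Z^+(p)$, put $m=\lceil a/e^*\rceil\geqslant 1$ and $b=me^*-a$; since $p\mid e^*$ and $\gcd(a,p)=1$, we have $1\leqslant b\leqslant e^*-1$ and $\gcd(b,p)=1$. The leading-order expansion $t^{-a}=\eta_0^{-m}S^{-m}t^b+(\text{lower order})$ together with the congruence $e\equiv e^{\dag}\,\op{mod}\,(\sigma-\id)\c L_{\c K}+C_2(\c L_{\c K})$ from the proof of Proposition~\ref{P2.6} identifies $D_{a0}$ modulo $\c L(m+1)+C_2(\c L)$ with a non-zero $W_M(k)$-multiple of $V_{(b,m),i}$, yielding $w(D_{a0})=m$; similarly $w(D_0)=1$, and $\sigma$-invariance of the weight filtration gives $w(D_{an})=w(D_{a0})$ for all $n$.

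For the upper bound, any commutator $[\ldots ,D_{a_{s'}n_{s'}}]$ appearing in $\c F^0_{\gamma,-N}$ lies in $\c L(\sum_i w(a_i))$, and may be discarded unless $\sum_i w(a_i)\leqslant s$. Under this constraint, $a_i\leqslant w(a_i)e^*-1$ when $a_i\neq 0$, a zero $a_i$ contributes nothing to $\gamma$, and $n_i\leqslant n_1\leqslant M-1$ (the second inequality enforced by the factor $p^{n_1}$ surviving modulo $p^M$), so
\[
\gamma=\sum_i a_ip^{n_i}\;\leqslant\;p^{M-1}\!\!\sum_{i:\,a_i\neq 0}\!(w(a_i)e^*-1)\;\leqslant\; p^{M-1}(e^*s-1).
\]
For the matching lower bound, at $\gamma_0=p^{M-1}(e^*s-1)$ the contribution with $s'=1$, $a_1=se^*-1$, $n_1=M-1$ is the unique length-one summand (since $\gcd(a_1,p)=1$ together with $p\mid se^*$ forces the $p$-adic valuation of $a_1p^{n_1}$ to equal $M-1$); its coefficient $p^{M-1}(se^*-1)\equiv -p^{M-1}\not\equiv 0\pmod{p^M}$ multiplies the weight-$s$ generator $D_{se^*-1,M-1}$, which projects to a non-zero element of the linear summand $(\c K^*/\c K^{*p^M})^{(s)}/(\c K^*/\c K^{*p^M})^{(s+1)}$ of $\c L(s)/\c L(s+1)$ via the direct-sum decomposition recorded in Subsection~\ref{S2.3}. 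All length-$\geqslant 2$ contributions at $\gamma_0$ lie in the complementary summand $(C_2(\c L)\cap\c L(s))/\c L(s+1)$, so no cancellation can occur and $\c F^0_{\gamma_0,-N}\notin\c L(s+1)_k$.

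The principal technical point is the weight computation for $D_{a0}$: one must verify that the conjugation $e\mapsto e^{\dag}=(-\sigma x)\circ e\circ x$ from Proposition~\ref{P2.6} and its iterative refinement by $x\in S\c N$ do not spoil the leading-order identification of $D_{a0}$ with $V_{(b,m),i}$ at the graded level. Once this is in place, both the optimisation and the non-cancellation arguments become essentially combinatorial.
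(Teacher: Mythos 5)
Your reduction of the statement to deciding when $\c F^0_{\gamma ,-N}\in\c L(s+1)_k$ is exactly the paper's, and your lower-bound monomial is the right one; but the preparatory weight computation contains a genuine error that invalidates the upper bound whenever $M\geqslant 2$. The formula $\op{wt}(D_{a0})=\lceil a/e^*\rceil$ is only correct \emph{modulo} $p$. By Proposition \ref{P2.1}c), $S=t^{e^*}\eta _0+pt^{e^*/p}\eta _1+p^2\eta _2$, so when $t^{-a}$ is rewritten in the basis $\{S^{-m}t^b\}$ the unit multiple of $S^{-m}t^b$ (with $me^*-b=a$) is accompanied by $p$-divisible terms $p^iS^{-m'}t^{b'}$ with $m'e^*-b'\geqslant a-ie^0$, where $e^0=e^*(1-1/p)$; for $M\geqslant 2$ some of these have $m'<m$ with coefficients that are nonzero in $W_M(k)$, so $D_{a0}\notin\c L(m)_k$ in general. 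The correct statement is Corollary \ref{C3.10}: $D_{an}\in\c L(u)_k$ modulo $p^c$ if and only if $a\geqslant e^*(u-1)+(c-1)e^0+1$ --- i.e.\ the relevant ``weight'' of a generator depends on the $p$-adic precision to which it is needed. Consequently your step ``the commutator lies in $\c L(\sum _iw(a_i))$ and may be discarded unless $\sum _iw(a_i)\leqslant s$'' is unjustified (a monomial can avoid $\c L(s+1)_k$ even when $\sum _i\lceil a_i/e^*\rceil >s$), and the inequality $a_i\leqslant w(a_i)e^*-1$ fails for the true weight, so the chain $\gamma\leqslant p^{M-1}(e^*s-1)$ does not follow.

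The missing idea is to exploit the coefficient $p^{n_1}a_1\eta (\bar n)$ of each monomial: only the class of the commutator modulo $p^{M-n_1}$ matters, and Corollary \ref{C3.10} with $c=M-n_1$ yields indices $u_i$ satisfying $a_i<e^*u_i+e^0(M-n_1-1)$. One must then balance the loss in $\gamma $ coming from the factors $p^{n_i}\leqslant p^{n_1}$ against the weakening of the weight bound coming from the correction $e^0(M-n_1-1)$; this trade-off is the substance of Lemma \ref{L3.11}, where it is resolved by the inequalities $(p^c-1)(e^*s-1)\leqslant r(e^0c-1)$ together with $r\leqslant p-1$ and $e^*\geqslant e^0+1$. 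Your lower bound happens to survive because the unique length-one monomial at $\gamma _0=p^{M-1}(e^*s-1)$ carries the coefficient $p^{M-1}(e^*s-1)$, so only the mod-$p$ weight of $D_{e^*s-1,M-1}$ (which is indeed $s$) is relevant, and length-$\geqslant 2$ terms land in the complementary $C_2$ summand; but the same precision-dependent bookkeeping is what the argument as a whole requires. For $M=1$ your proof essentially coincides with the paper's; for $M\geqslant 2$ it has a real gap.
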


\begin{proof}
 Recall, cf. Subsection \ref{S1.7}, that for any $v\geqslant 0$, 
the ramification subgroups $\c G_{<p}^{(v)}$ are identified 
with the ideals $\c L^{(v)}$ of $\c L$, and for sufficiently large 
$N=N(v)$, the ideal 
 $\c L^{(v)}_k$ is generated by all $\sigma ^n\c F^0_{\gamma ,-N}$, 
 where $\gamma\geqslant v$, $n\in\Z /N_0$ and the elements 
 $\c F^0_{\gamma ,-N}$ are given by \eqref{E1.9}. 

Let $e^0=e^*(1-1/p)$. 

\begin{Lem} \label{L3.9} 
 If $a\in\Z ^+(p)$, $u\in\N $ and $0\leqslant c<M$ then the following 
two conditions are equivalent: 
\medskip 

{\rm a)} $t^aS^{-u}\in\m (\c K)\,\op{mod}\,p^{c}\,O(\c K)$;
\medskip 

{\rm b)}  $a>e^*u+e^0(c-1)$. 
\end{Lem}

\begin{proof}[Proof of lemma]  
Proposition \ref{P2.1}c) implies that 
$$t^aS^{-u}=t^{a-ue^*}\eta _0\left (1+\sum _{i\geqslant 1} 
t^{-ie^0}\eta _i(u)p^i\right )$$
where $\eta _0$ and all $\eta _i(u)$ are invertible elements of 
$W_M(k)[[t]]\subset O(\c K)$. Therefore, 
$t^aS^{-u}\in\m (\c K)\,\op{mod}\,p^cO(\c K)$ if and only if 
for all $1\leqslant i<c$, $t^{a-ue^*-ie^0}\in\m (\c K)$, i.e. 
$a-ue^*-(c-1)e^0>0$. The lemma is proved. 
\end{proof} 

\begin{Cor} \label{C3.10} $D_{an}\in\c L(u)_k\,\op{mod}\,p^cO(\c K)$ 
if and only if we have that  
$a\geqslant e^*(u-1)+(c-1)e^0+1$. 
 
\end{Cor}

\begin{Lem}\label{L3.11} 
Suppose $N\geqslant 0$. 
\medskip 

 {\rm a)} If $\gamma >p^{M-1}(e^*s-1)$ then $\c F^0_{\gamma ,-N}\in\c L(s+1)_k$;
\medskip 

{\rm b)} if $\gamma =p^{M-1}(e^*s-1)$ then 
$$\c F^0_{\gamma ,-N}\equiv p^{M-1}D_{e^*s-1,M-1}
\,\op{mod}\,\c L(s+1)_k\,.$$
\end{Lem}

\begin{proof}[Proof of lemma] 
For any $\gamma >0$, $\c F^0_{\gamma , -N}$ is a $\Z /p^M$-linear combination 
of the monomials of the form 
$$X(b\,; a_1,\dots ,a_r;m_2,\dots ,m_r)=p^ba_1[\dots [D_{a_1,b-m_1},D_{a_2,b-m_2}],\dots ,
D_{a_r,b-m_r}]\,,$$
where $0\leqslant b<M$, $1\leqslant r<p$, all $a_i\in\Z ^0(p)$, 
$0=m_1\leqslant m_2\leqslant\dots \leqslant m_r$,  and 
$$p^b\left (a_1+\frac{a_2}{p^{m_2}}+\dots +\frac{a_r}{p^{m_r}}\right )=\gamma \,.$$ 
For $1\leqslant i\leqslant r$, let $u_i\in\Z $ be such that (note that $p^M|e^*$, $p^{M-1}|e^0$ and 
if $M=1$ then $M-b-1=0$) 
$$1+e^*(u_i-1)+e^0(M-b-1)\leqslant a_i<e^*u_i+e^0(M-b-1)\,.$$
This means that all $D_{a_i,b-m_i}\in\c L(u_i)_k\,\op{mod}\,p^{M-b}\c L_{k}$\,. 

Suppose $X(b\,; a_1,\dots ,a_r;m_2,\dots ,m_r)\notin \c L(s+1)_k$. This implies that  
$u_1+\dots +u_r\leqslant s$ and, therefore, $a_1+\dots +a_r\leqslant 
e^*s+re^0(M-b-1)-r$. 

If $\gamma >p^{M-1}(e^*s-1)$ then $a_1+\dots +a_r>p^{M-b-1}(e^*s-1)$ and 
$$e^*s+re^0(M-b-1)-r>p^{M-b-1}(e^*s-1).$$

Set $c=M-b-1$, then $0\leqslant c<M$ and 
$$(p^c-1)(e^*s-1)\leqslant r(e^0c-1)\,.$$

If $c=0$ then $r\leqslant 0$, contradiction.

If $c\geqslant 1$ then (use that $r\leqslant p-1$ and $s\geqslant 1$) 
$$(1+p+\dots +p^{c-1})(e^*-1)\leqslant e^0c-1\,.$$
But then $e^*=e^0(1+1/(p-1))\geqslant e^0+1$ implies that $1+p+\dots +p^{c-1}<c$. This 
contradiction proves a).

Suppose $\gamma =p^{M-1}(e^*s-1)$. Then the expression for $\c F^0_{\gamma ,-N}$ contains the term 
$p^{M-1}D_{e^*s-1,M-1}$. Take (with above notation) any another monomial 
$X(b; a_1,\dots ,a_r;m_2,\dots ,m_r)$ from the expression of 
$\c F^0_{\gamma ,-N}$. Clearly,  $r\geqslant 2$. As earlier, 
the assumption that this monomial does not belong to 
$\c L(s+1)_k$ implies that 
$$(p^c-1)(e^*s-1)\leqslant r(e^0c-1)+1\,.$$

If $c=0$ then $r\leqslant 1$, contradiction. 

If $c\geqslant 1$ then again use that $r\leqslant p-1$ to obtain 
$$(1+p+\dots +p^{c-1})(e^*s-1)\leqslant e^0c-1+1/(p-1)<e^0c$$
and note that the left-hand side of this inequality $>ce^0$ (use that 
$e^*s-1\geqslant e^*-1\geqslant e^0$). The contradiction. 
The lemma is completely proved.  
\end{proof} 

It remains to note that Lemma \ref{L3.11} implies that 
$$\op{max}\{v\ |\ \c L^{(v)}\not\subset\c L(s+1)\}=p^{M-1}(e^*s-1)\,.$$
Proposition \ref{P3.8} is completely proved. 
\end{proof} 

\section{Applications to the mixed characteristic case} \label{S4} 

Let $K$ be a finite field extension of $\Q _p$ with the residue field 
$k\simeq \F _{p^{N_0}}$ and the ramification index $e_K$. Let $\pi _0$ be 
a uniformising element in $K$. Denote by $\bar K$ an algebraic closure of $K$ and set 
$\Gamma =\Gal (\bar K/K)$. Assume that $K$ contains a primitive 
$p^M$-th root of unity $\zeta _M$.

\subsection{} \label{S4.1} 
For $n\in\N $, choose $\pi _n\in\bar K$ such that $\pi _n^p=\pi _{n-1}$. 
Let $\wt{K}=\bigcup _{n\in\N }K(\pi _n)$,  
$\Gamma _{<p}:=\Gamma /\Gamma ^{p^M}C_p(\Gamma )$ and $\wt{\Gamma }=
\Gal (\bar K/\wt{K})$. Then $\wt{\Gamma }\subset\Gamma $ 
induces a continuous group homomorphism 
$i :\wt{\Gamma }\To \Gamma _{<p}$.  

We have $\Gal (K(\pi _M)/K)=\langle \tau _0\rangle ^{\Z /p^M}$, where $\tau _0(\pi _M)
=\pi _M\zeta _M$. Let $j:\Gamma _{<p}\To \Gal (K(\pi _M)/K)$ be a natural epimorphism. 

\begin{Prop} \label{P4.1}
 The following sequence 
$$\wt{\Gamma }\overset{i}\To \Gamma _{<p}\overset{j}
\To \langle \tau _0\rangle ^{\Z /p^M}\To 1$$ 
is exact. 
\end{Prop}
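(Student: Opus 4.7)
The plan is to establish exactness in three steps: surjectivity of $j$; triviality of $j\circ i$; and the containment $\Ker(j)\subseteq\op{Im}(i)$. The first two are elementary, while the last is the substantive content. By Galois correspondence, the last reduces to the field-theoretic assertion $K_{<p}\cap\wt{K}\subseteq K(\pi_M)$; the reverse inclusion is immediate.

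For surjectivity of $j$: since $\zeta_M\in K$, the polynomial $X^{p^M}-\pi_0$ is Eisenstein, so $K(\pi_M)/K$ is cyclic Galois of order $p^M$ with group $\langle\tau_0\rangle$; this group is abelian (nilpotent class $1<p$) and has exponent $p^M$, so $K(\pi_M)\subseteq K_{<p}$ and $j$ is the restriction homomorphism, which is surjective. For $j\circ i=1$: any $\sigma\in\wt{\Gamma}$ fixes every $\pi_n$, in particular $\pi_M$, so acts trivially on $K(\pi_M)$.

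For the main inclusion, take $\alpha\in K_{<p}\cap\wt{K}$, choose $n\geqslant M$ with $\alpha\in K(\pi_n)$, and set $F=K(\alpha)$. I first aim to show $F=K(\pi_i)$ for some $i$; this classification of intermediate fields of $K(\pi_n)/K$ I will verify via the Galois closure $\wt{L}_n:=K(\pi_n,\zeta_{p^n})$. The group $G=\Gal(\wt{L}_n/K)$ is generated by $s$ sending $\pi_n\mapsto\pi_n\zeta_{p^n}$ and fixing $\zeta_{p^n}$, together with $t$ fixing $\pi_n$ and sending $\zeta_{p^n}\mapsto\zeta_{p^n}^{1+p^M}$, of orders $p^n$ and $p^{n-M}$ respectively. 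The essential input here is $K(\pi_n)\cap K(\zeta_{p^n})=K$, equivalently the irreducibility of $X^{p^n}-\pi_0$ over $K(\zeta_{p^n})$: if this failed then $\pi_0$ would be a $p$-th power in $K(\zeta_{p^n})$, giving $K(\pi_1)\subseteq K(\zeta_{p^n})$ and hence $K(\pi_1)=K(\zeta_{p^{M+1}})$ by degree count, which is excluded because the Kummer generators $\pi_0$ and $\zeta_M$ lie in different classes of $K^*/K^{*p}$ (distinguished by $p$-adic valuation). Granted this, $|G|=p^{2n-M}$ and $G\cong\langle s\rangle\rtimes\langle t\rangle$; the subgroups of $G$ containing $\Gal(\wt{L}_n/K(\pi_n))=\langle t\rangle$ are precisely $H_i=\langle s^{p^i},t\rangle$, with fixed fields $K(\pi_i)$ for $i=0,1,\dots,n$. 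Hence $F=K(\pi_i)$ for some $i$.

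To conclude, the Galois closure of $F=K(\pi_i)$ over $K$ is the splitting field of $X^{p^i}-\pi_0$, namely $K(\pi_i,\zeta_{p^i})$. Since $F\subseteq K_{<p}$ and $K_{<p}/K$ is Galois, this closure lies in $K_{<p}$, so $\Gal(K(\pi_i,\zeta_{p^i})/K)$ is a quotient of $\Gamma_{<p}$; but this group contains the automorphism $\pi_i\mapsto\pi_i\zeta_{p^i}$, $\zeta_{p^i}\mapsto\zeta_{p^i}$ of order $p^i$, so the exponent bound $p^M$ of $\Gamma_{<p}$ forces $i\leqslant M$, giving $\alpha\in K(\pi_i)\subseteq K(\pi_M)$ as required. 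The principal obstacle is the non-Galois nature of the tower $K(\pi_n)/K$ for $n>M$, which precludes a direct Galois-group quotient argument and is navigated via the Galois closure $\wt{L}_n$ together with the valuation input above.
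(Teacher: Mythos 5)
Your proof is correct, and it takes a genuinely different route from the paper's. The paper works from above with the Galois extension $\wt{K}'=\bigcup_{n\geqslant M}K(\pi_n,\zeta_n)$: it presents $\Gal(\wt{K}'/K)=\langle\sigma,\tau\rangle$ with $\sigma^{-1}\tau\sigma=\tau^{(1+p^Ms_0)^{-1}}$, observes that the commutators land in $\langle\tau^{p^M}\rangle$ so that $\Gamma_{\wt{K}'/K}^{p^M}C_p(\Gamma_{\wt{K}'/K})=\langle\sigma^{p^M},\tau^{p^M}\rangle$, and then identifies $\Ker\,j$ as generated by the image of $\Gamma_{\wt{K}'}$ together with a lift of $\sigma$, i.e.\ as the image of $\wt{\Gamma}$. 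You instead push $\Ker\,j\subseteq\op{Im}\,i$ through the Galois correspondence into the field-theoretic containment $K_{<p}\cap\wt{K}\subseteq K(\pi_M)$, which you prove by classifying the intermediate fields of $K(\pi_n)/K$ (via the Galois closure $K(\pi_n,\zeta_{p^n})$ and the linear disjointness $\pi_0\notin K(\zeta_{p^n})^{*p}$) and then invoking only the exponent bound on $\Gamma_{<p}$; the nilpotency-class condition plays no role in your argument, whereas the paper's computation uses the commutator structure explicitly. Your route is more self-contained in that it actually verifies the linear disjointness that the paper's presentation of $\Gal(\wt{K}'/K)$ takes for granted; the paper's route has the side benefit of producing the generators $\sigma,\tau$ in the form used later in Section 4. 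One small point to tidy: since $K$ may contain more roots of unity than $\zeta_M$, the image of the cyclotomic character is $1+p^{M'}\Z_p$ for some $M'\geqslant M$ (this is why the paper writes $\zeta_n\mapsto\zeta_n^{1+p^Ms_0}$ with an unspecified $s_0\in\Z$), so your $t$ has order $p^{n-M'}$ and the unique degree-$p$ subfield of $K(\zeta_{p^n})/K$ is $K(\zeta_{p^{M'+1}})$ rather than $K(\zeta_{p^{M+1}})$; your valuation argument excludes $K(\pi_1)=K(\zeta_{p^{M'+1}})$ just as well, so nothing breaks, but the orders should be stated accordingly.
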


\begin{proof}
For $n>M$, let $\zeta _n\in\bar K$ be such that $\zeta _n^{p}=\zeta _{n-1}$.  

Consider $\wt{K}'=\bigcup _{n\geqslant M}K(\pi _n,\zeta _n)$. Then 
$\wt {K}'/K$ is Galois with the Galois group 
$\Gamma _{\wt{K}'/K}=\langle \sigma, \tau \rangle $. Here for any $n\geqslant M$ and 
some $s_0\in \Z$, 
$\sigma \zeta _n=\zeta _n^{1+p^Ms_0}$, 
$\sigma\pi _n=\pi _n$, $\tau (\zeta _n)=\zeta _n$, $\tau\pi _n=\pi _n\zeta _n$ and 
$\sigma ^{-1}\tau\sigma =\tau ^{(1+p^Ms_0)^{-1}}$. 

Therefore, $\Gamma _{\wt{K}'/K}^{p^M}=\langle \sigma ^{p^M},\tau ^{p^M}\rangle $ and 
for the subgroup of second commutators we have 
$C_2(\Gamma _{\wt{K}'/K})\subset 
\langle\tau ^{p^M}\rangle\subset\Gamma _{\wt{K}'/K}^{p^M}$. This implies that  
$$\Gamma ^{p^M}_{\wt{K}'/K}C_p(\Gamma _{\wt{K}'/K})=
\langle \sigma ^{p^M}, \tau ^{p^M}\rangle $$ 
 and 
for $\Gamma _{\wt{K}'/K}(M):=\Gamma _{\wt{K}'/K}/
\Gamma ^{p^M}_{\wt{K}'/K}C_p(\Gamma _{\wt{K}'/K})$,  
we obtain a natural exact sequence 
$$\langle\sigma\rangle \To \Gamma _{\wt{K}'/K}(M)\To 
\langle\tau\rangle 
\,\op{mod}\,\langle\tau ^{p^M}\rangle =\langle\tau _0\rangle ^{\Z /p^M}\To 1\,.$$
Note that  $\Gamma _{\wt{K}'}$ together with a lift $\hat\sigma \in\wt{\Gamma}$ of $\sigma $  
generate $\wt{\Gamma }$. 
The above short exact sequence implies that 
$\op{Ker}
\left (\Gamma _{<p}\To \langle\tau _0\rangle ^{\Z /p^M} \right )$
is generated by $\hat\sigma $ and the image of $\Gamma _{\wt{K}'}$. 
So, this kernel coincides with the image of $\wt{\Gamma }$ in $\Gamma _{<p}$.  
\end{proof}

\subsection{} \label{S4.2} 
Let $R$ be Fontaine's ring. We have a natural embedding 
$k\subset R$ and an element $t_0=(\pi _n\,\op{mod}\,p)_{n\geqslant 0}\in R$. 
Then we can identify the field  $k((t_0))$ with the field $\c K$ from 
Sections \ref{S1}-\ref{S3}. If $R_0=\op{Frac}\,R$ 
then $\c K$ is a closed subfield of $R_0$ and 
the theory of the field-of-norms functor identifies $R_0$ with 
the completion of the separable closure $\c K_{sep}$ of $\c K$ in $R_0$. 
Note that $R$ is the valuation ring of $R_0$ and denote by $\m _R$ the maximal ideal of $R$.

This allows us to identify $\c G=\op{Gal} (\c K_{sep}/\c K)$ 
with $\wt{\Gamma }\subset \Gamma \subset \Aut\, R_0$. This identification 
is compatible with the appropriate ramification filtrations. Namely, 
if $\varphi _{\wt{K}/K}$ is the Herbrand function of the (arithmetically profinite) 
field extension $\wt{K}/K$ then for any $v\geqslant 0$, 
$\c G^{(v)}=\Gamma ^{(v_1)}\cap\wt{\Gamma}$, where $v_1=
\varphi _{\wt{K}/K}(v)$. 

Let as earlier,  $\c G_{<p}=\c G/\c G^{p^M}C_p(\c G)$. 
Then the embedding $\c G=\wt{\Gamma }\subset\Gamma $ induces a natural continuous 
morphism $\iota $ of the infinite group $\c G _{<p}$ 
to the finite group $\Gamma _{<p}$. Therefore, by 
Proposition \ref{P4.1} we obtain the following 
exact sequence 

\begin{equation} \label{E4.1} 
\c G_{<p}\overset{\iota}\To \Gamma _{<p}
\overset{j}\To \langle\tau _0\rangle ^{\Z /p^M}\To 1\,.
\end{equation}  

Let $\zeta _M=1+\sum _{i\geqslant 1}[\beta _i]\pi _0^i$ with all $\beta _i\in k$. 
Consider the identification of rings $R/t_0^{e_K}\simeq O_{\bar K}/p$ given by 
$(r_0,\dots ,r_n,\dots )\mapsto r_0$. If $\varepsilon =(\zeta _n)_{n\geqslant 0}$ 
is Fontaine's element such that $\zeta _M$ is our fixed $p^M$-th root of unity then 
we have in $W_M(R)$ the following congruence (as earlier, $t=(t_0,\dots, 0)\in W_M(R)$) 
\begin{equation}\label{E4.2} 
 \sigma ^{-M}\varepsilon \equiv 1+\sum _{i\geqslant 1}\beta _it^i\,\op{mod}\,(t^{e_K},p)\,.
\end{equation}
Now we can specify the choice of the elements 
$S_0, S\in\m (\c K)$, cf. Subsection \ref{S2.1}, 
by setting $E(1,S_0)=1+\sum _{i}\beta _it^i$ and $S=[p]^M(S_0)$. Note that 
$S\,\op{mod}\,p$ generates the ideal $(t_0^{e^*})$ in $O_{\c K}=k[[t_0]]$, 
where $e^*=pe_K/(p-1)$. Now congruence \eqref{E4.2} 
can be rewritten in the following form 
$$\sigma ^{-M}\varepsilon \equiv E(1,S_0)\,\op{mod}\,(\sigma ^{-1}S^{p-1}, p)\,.$$ 
Applying $\sigma $ we obtain 
$$\sigma ^{-M+1}\varepsilon \equiv E(1,[p]S_0)\,\op{mod}\,(S^{p-1},p)\,,$$ 
and then taking $p^{M-1}$-th power 
$$\varepsilon \equiv E(1,S)\,\op{mod}\,S^{p-1}W_M(R)\,.$$

\subsection{} \label{S4.3} 
Let $v_{\c K}$ be the extension of the normalized valuation 
on $\c K$ to $R_0$.  
Consider a continuous field embedding $\eta _0:\c K\To R_0$ compatible with 
$v_{\c K}$. Denote by $\op{Iso}(\eta _0,\c K_{<p},R_0)$ the set of all 
extensions $\eta _{<p,0}$ of $\eta _0$ to $\c K_{<p}$. 
This set is a principal homogeneous space over $\c G_{<p}=G(\c L)$.

Choose a lift $\eta :O(\c K)\To W_M(R_0)$ such that 
$\eta\,\op{mod}\,p=\eta _0$ and $\eta\sigma =\sigma\eta $. Proceeding similarly 
to Subsection \ref{S1.1} we can identify the set of all lifts 
$\eta _{0,<p}$ of $\eta _0$ from $\op{Iso}(\eta _0,\c K_{<p}, R_0)$ with 
the set of all (commuting with $\sigma $) lifts 
$\eta _{<p}$ of $\eta $ from  
$\op{Iso}(\eta ,O(\c K_{<p}), W_M(R_0))$.

Specify uniquely each lift $\eta _{<p}$ by the knowledge of  
$\eta _{<p}(f)\in \c L_{R_0}$ in the set of all solutions $f'\in\c L_{R_0}$ 
of the equation  $\sigma f'=\eta (e)\circ f'$. 
(The elements $e\in\c L_{\c K}$ and $f\in\c L_{\c K_{<p}}$ 
were chosen in Subsection \ref{S1.4}.) 

Consider the appropriate submodules $\c M\subset\c L_{\c K}$, 
$\c M_{<p}\subset\c L_{\c K_{<p}}$ from Subsection \ref{S3.4} and 
define similarly 
$$\c M_{R_0}=
\sum _{1\leqslant s<p}S^{-s}\c L(s)_{\m (R)}+\c L(p)_{R_0}\subset\c L_{R_0}\,,$$ 
where $\m (R)=W_M(\m _R)$. 
We know that $e\in \c M$, $f\in \c M_{<p}$ and for 
similar reasons, all $\eta _{<p}(f)\in\c M_{R_0}$. 

\begin{Lem} \label{L4.3} With above notation suppose that 
$\eta (e)\equiv e\,\op{mod}\,S^{p-1}\c M_{R_0}$. 
Then there is $c\in S^{p-1}\c M_{R_0}$ such that 
$\eta (e)=\sigma c\circ e\circ (-c)$. 
\end{Lem}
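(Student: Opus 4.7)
The plan is to construct $c$ by successive approximation inside $S^{p-1}\c M_{R_0}$, using the commutator filtration on $\c L$ together with the $S$-power filtration in close analogy with Proposition \ref{P2.6} and Subsection \ref{S3.2}. Define $V_s := S^{p-1}\c M_{R_0} \cap C_s(\c L)_{R_0}$ for $1 \le s \le p$. Because $\c L$ has nilpotent class $<p$, we have $V_p = 0$, so the iteration will terminate in finitely many steps. I would establish inductively the existence of $c_s \in S^{p-1}\c M_{R_0}$ such that $B_s := \eta(e) - \sigma c_s \circ e \circ (-c_s)$ lies in $V_s$; then $c := c_p$ satisfies $B_p = 0$, which is the required identity.

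For the base case take $c_1 := 0$, so that the hypothesis directly gives $B_1 = \eta(e) - e \in V_1$. For the inductive step I would set $c_{s+1} := c_s + X_s$ with $X_s \in V_s$ to be chosen. Expanding via Campbell--Hausdorff one obtains
$$\sigma c_{s+1} \circ e \circ (-c_{s+1}) \equiv \sigma c_s \circ e \circ (-c_s) + (\sigma X_s - X_s) \pmod{V_{s+1}},$$
where the commutator corrections land in $V_{s+1}$ because $[V_s, e] \subset V_{s+1}$: the bracket with $e \in \c M$ raises commutator depth by one and preserves the $S^{p-1}$-factor, since $\c M$ is a Lie subalgebra of $\c L_{R_0}$. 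The inductive step thus reduces to finding $X_s \in V_s$ with $\sigma X_s - X_s \equiv B_s \pmod{V_{s+1}}$, which is furnished by the convergent series $X_s := -\sum_{n \ge 0}\sigma^n B_s$: the Witt-vector Frobenius satisfies $\sigma(x) \equiv x^p \pmod{pW_M(R)}$ and $\sigma(S) \equiv S^p \pmod{p}$, so $\sigma$ preserves $S^{p-1}\c M_{R_0}$ and acts on it as a strict contraction in the $\m(R)$-adic topology, so the series converges to an element of $V_s$.

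The main obstacle will be the careful module-stability verification underpinning the filtration $\{V_s\}$: that the Campbell--Hausdorff commutator corrections at each step actually land in $V_{s+1}$, and that the Artin--Schreier solver $-\sum_{n \ge 0}\sigma^n$ keeps $X_s$ inside $V_s$. Both rely on two observations: $\c M_{R_0}$ is closed under the Lie bracket (using the central property $[\c L(s_1), \c L(s_2)] \subset \c L(s_1+s_2)$ together with absorbing $S$-poles of order $\ge p$ into the ideal $\c L(p)_{R_0}$), and $\sigma$ is strictly $\m(R)$-adic contracting on $S^{p-1}\c M_{R_0}$, so convergence holds in the relevant subquotient topology on $\c L_{R_0}$ and the sum retains both the $S^{p-1}$-structure and the prescribed commutator depth.
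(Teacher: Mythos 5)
Your proposal is correct and follows essentially the same route as the paper: a successive approximation along the commutator filtration (the paper uses $S^{p-1}C_i(\c M_{R_0})$, you use $S^{p-1}\c M_{R_0}\cap C_s(\c L)_{R_0}$, which is equivalent in effect), solving $\sigma X - X = B$ at each stage via the geometric series $-\sum_{n\geqslant 0}\sigma ^n B$ and terminating after $p-1$ steps because the nilpotent class is $<p$. The only cosmetic difference is that you accumulate the corrections additively as $c_{s+1}=c_s+X_s$ while the paper composes them as group elements $c_1\circ\dots\circ c_{p-1}$.
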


\begin{proof} 
 Note that $S^{p-1}\c M_{R_0}$ is an ideal in $\c M_{R_0}$ 
and for any $i\in\N$ and $m\in S^{p-1}C_i(\c M_{R_0})$, 
there is $c\in S^{p-1}C_i(\c M_{R_0})$ 
such that $\sigma c-c=m$. 
(Use that $\sigma $ is topologically nilpotent on $S^{p-1}C_i(\c M_{R_0})$.)

Therefore,  there is $c_1\in S^{p-1}\c M_{R_0}$ such that 
$\eta (e)=e+\sigma c_1-c_1$. This implies that 
$\eta (e)\circ c_1\equiv \sigma c_1\circ e\,
\op{mod}\,S^{p-1}C_2(\c M_{R_0})$. Similarly, there is $c_2\in 
S^{p-1}C_2(\c M_{R_0})$ such that 
$\eta (e)\circ c_1+c_2=\sigma c_2+\sigma c_1\circ e_0$ and 
$\eta (e_0)\circ c_1\circ c_2\equiv \sigma c_2\circ \sigma c_1\circ e_0 
\op{mod}\,S^{p-1}C_3(\c M_{R_0})$, and so on. 

After $p-1$ iterations we obtain for $1\leqslant i<p$ the elements 
$c_i\in S^{p-1}C_i(\c M_{R_0})$ such that 
$$\eta (e)\circ (c_1\circ \dots \circ c_{p-1})=
\sigma (c_{p-1}\circ \dots \circ c_1)\circ e.$$
The lemma is proved.
\end{proof}

The above lemma implies the following properties:
\medskip 

\begin{Prop} \label{P4.3} 
 {\rm a)}\ If $\eta (e)\equiv e\,\op{mod}\,S^{p-1}\c M_{R_0}$ then 
for any $\eta _{<p}\in\op{Iso} (\eta ,\c K_{<p}, R_0)$, there is 
a unique $l\in G(\c L)\,\op{mod}\,G(\c L(p))$ such that 
$$\eta _{<p}(f)\equiv f\circ l\,\op{mod}\,S^{p-1}
\c M_{R_0}\,.$$

{\rm b)} Suppose 
$\eta ',\eta '':O(\c K)\To W_M(R_0)$ are such that 
$$\eta '(t)\equiv\eta ''(t)\,\op{mod}\,
S^{p-1}W_M(\m _{R})\,.$$ 
If 
$\eta '_{<p}\in\op{Iso}(\eta ',O(\c K_{<p}),W_M(R_0))$ and 
$\eta ''_{<p}\in\op{Iso}(\eta '',O(\c K_{<p}),W_M(R_0))$ then there is a unique 
$l\in G(\c L)$ such that  
$$\eta '_{<p}(f)\equiv \eta ''_{<p}(f)\circ l\,\op{mod}\,S^{p-1}\c M_{R_0}.$$ 
\end{Prop}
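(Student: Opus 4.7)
Apply Lemma~\ref{L4.3} to the hypothesis $\eta(e) \equiv e \pmod{S^{p-1}\c M_{R_0}}$ to produce $c \in S^{p-1}\c M_{R_0}$ with $\eta(e) = \sigma c \circ e \circ (-c)$. A direct computation using $\sigma f = e \circ f$ then gives
$$\sigma(c \circ f) = \sigma c \circ e \circ f = (\sigma c \circ e \circ (-c)) \circ (c \circ f) = \eta(e) \circ (c \circ f),$$
so $c \circ f$ solves the same equation $\sigma x = \eta(e) \circ x$ as $\eta_{<p}(f)$. Because the set of solutions is a principal homogeneous space over $G(\c L)$ acting on the right (the $\sigma$-fixed part of $\c L_{R_0}$), there is a unique $l \in G(\c L)$ with $\eta_{<p}(f) = c \circ f \circ l$. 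Since $S^{p-1}\c M_{R_0}$ is an ideal in $\c M_{R_0}$ (as noted in the proof of Lemma~\ref{L4.3}) and $c$ lies in it, the Campbell--Hausdorff expansion of $c \circ (f \circ l)$ differs from $f \circ l$ only by terms in $S^{p-1}\c M_{R_0}$, which yields the congruence. For uniqueness modulo $G(\c L(p))$, two admissible choices $l_1,l_2$ force $(-l_2) \circ l_1 \in \c L \cap S^{p-1}\c M_{R_0}$; since $W_M(\m_R)$ has no nonzero $\sigma$-fixed points, this intersection collapses to $\c L(p)$, in parallel with the identification of the stabilizer of $\bar f$ in Proposition~\ref{P3.5}.

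\textbf{Plan for part (b).} The argument mirrors (a), with $\eta''(e)$ playing the role of $e$. The first task is to verify $\eta'(e) \equiv \eta''(e) \pmod{S^{p-1}\c M_{R_0}}$. Setting $\mu = \eta''(t) - \eta'(t) \in S^{p-1}W_M(\m_R)$ and using that $v(\mu/\eta'(t))$ is large, one expands
$$\eta''(t)^{-a} - \eta'(t)^{-a} = \eta'(t)^{-a}\bigl((1 + \mu/\eta'(t))^{-a} - 1\bigr),$$
and places each summand $(\eta''(t)^{-a} - \eta'(t)^{-a}) D_{a0}$ into the correct piece $S^{p-1-s}\c L(s)_{\m(R)}$ or $S^{p-1}\c L(p)_{R_0}$ of $S^{p-1}\c M_{R_0}$, exploiting Corollary~\ref{C3.10}: as $a$ grows, $D_{a0}$ lies in deeper ideals $\c L(u)$, which compensates for the loss of valuation in the coefficient. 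With this congruence in hand, the proof of Lemma~\ref{L4.3} carries over verbatim---only the difference lying in $S^{p-1}\c M_{R_0}$ is ever used, not that one side equals $e$---to produce $c \in S^{p-1}\c M_{R_0}$ with $\eta'(e) = \sigma c \circ \eta''(e) \circ (-c)$. Then $c \circ \eta''_{<p}(f)$ solves $\sigma x = \eta'(e) \circ x$, so $\eta'_{<p}(f) = c \circ \eta''_{<p}(f) \circ l$ for a unique $l \in G(\c L)$, and absorbing $c$ into the ideal yields the claimed congruence.

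\textbf{Main obstacle.} The technical heart is the preliminary congruence $\eta'(e) \equiv \eta''(e) \pmod{S^{p-1}\c M_{R_0}}$: the negative powers $t^{-a}$ in $e$ cause a loss of valuation that competes with the gain from $\mu \in S^{p-1}W_M(\m_R)$, and one must carefully trade depth in the filtration $\{\c L(s)\}$ against powers of $S$ using Corollary~\ref{C3.10} so that every perturbation term lands in the right summand of $\c M_{R_0}$. Once this is established, the Campbell--Hausdorff manipulations in both parts reduce to essentially formal consequences of Lemma~\ref{L4.3} and the ideal structure of $S^{p-1}\c M_{R_0}$.
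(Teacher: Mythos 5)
Your proposal is correct and follows exactly the route the paper intends: the paper states Proposition \ref{P4.3} with no written proof beyond the remark that Lemma \ref{L4.3} implies it, and your argument supplies precisely the missing details — transporting $f$ by the $c$ from Lemma \ref{L4.3}, using the torsor structure of the solution set of $\sigma x=\eta(e)\circ x$ over $G(\c L)$, absorbing $c$ into the ideal $S^{p-1}\c M_{R_0}$, and identifying $\c L\cap S^{p-1}\c M_{R_0}$ with $\c L(p)$ as in Proposition \ref{P3.5}. Your identification of the real content of part (b) — the preliminary congruence $\eta'(e)\equiv\eta''(e)\,\op{mod}\,S^{p-1}\c M_{R_0}$, proved by trading depth in $\{\c L(s)\}$ against powers of $S$ via Corollary \ref{C3.10} — is exactly the estimate the paper's machinery (cf.\ Lemma \ref{L3.11}) is designed to handle, so the plan is sound as written.
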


\subsection{} \label{S4.4} 

The action of $\Gamma =\op{Gal}(\bar K/K)$ on $R_0$ 
is strict and, therefore, the elements $g\in\Gamma $ 
can be identified with all continuous field embeddings 
$g:\c K_{sep}\to R_0$ such that $g|_{\c K}$ belongs to 
the set $\langle \tau _0\rangle =\{\tau _0^{a}\ |\ a\in\Z _p\}$.

Extend 
$\tau _0$ now to a continuous embedding 
$\tau :O(\c K)\To W_M(R_0)$ uniquely determined by the condition 
$\tau (t)=t\varepsilon $. Clearly, $\tau $ 
commutes with $\sigma $. Then the results of Subsection \ref{S1.1} 
imply that the elements of $\Gamma $ are identified with 
the continuous embeddings $g: O(\c K_{sep})\to W_M(R_0)$ such that 
$g|_{O(\c K)}$ belongs to the set $\langle \tau \rangle $. 

Consider $h_0\in\op{Aut}(\c K)$ such that 
$h_0(t_0)=t_0E(1,S\,\op{mod}\,p)$  and $h_0|_k=\id $. Then 
its lift $h\in \op{Aut} O(\c K)$ such that $h(t)=tE(1,S)$ commutes with 
$\sigma $ and there are the appropriate 
groups $\wt{\c G}_h$ and $\c G_h$ from Section \ref{S3}.

 Clearly,  
$h(t)\equiv \tau (t)\,\op{mod}\,S^{p-1}\m _{R}$ and we can apply 
Proposition \ref{P4.3}b). This implies that 
the $\Gamma $-orbit of $f\,\op{mod}\,S^{p-1}\c M_{R_0}$ 
is contained in the  
$\wt{\c G}_{h}$-orbit of $f\,\op{mod}\,S^{p-1}\c M_{R_0}$.   
Therefore, there is a map of sets $\kappa :\Gamma \To \c G_h$ 
uniquely determined by the requirement that  for any $g\in\Gamma $, 
$$(\id _{\c L}\otimes g)f\equiv (\id _{\c L}\otimes \kappa (g))f\,
\op{mod}\,S^{p-1}\c M_{R_0}\, .$$
(Use that $\c G_h$ strictly acts on the $\wt{\c G}_h$-orbit of 
$f\,\op{mod}\,S^{p-1}\c M_{R_0}$.)

\begin{Prop} \label{P6.4} 
$\kappa $ induces a group isomorphism $\kappa _{<p}:\Gamma _{<p}\To\c G_h$. 
 \end{Prop}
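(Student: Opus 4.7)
The plan is to verify (i) $\kappa $ is a group homomorphism, (ii) it factors through $\Gamma _{<p}$, and (iii) the resulting $\kappa _{<p}$ is an isomorphism, by comparing the short exact sequences of Propositions \ref{P4.1} and \ref{P3.5} via the five-lemma.

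For (i), given $g_1,g_2\in\Gamma $ with $g_i|_{O(\c K)}=\tau ^{a_i}$, I would fix lifts $\wt\kappa (g_i)\in\wt{\c G}_h$ whose restriction to $O(\c K)$ is $h^{a_i}$, and apply $g_1$ to the defining congruence $g_2(f)\equiv \wt\kappa (g_2)(f)\bmod S^{p-1}\c M_{R_0}$. Two points need verification: first, that the $\Gamma $-action of $g_1$ preserves $S^{p-1}\c M_{R_0}$, which follows from $g_1(t)=t\varepsilon ^{a_1}$ together with $\varepsilon \equiv E(1,S)\bmod S^{p-1}W_M(R)$ and the computation in the proof of Proposition \ref{P3.1}, giving $g_1(S)\equiv S\bmod S^{p-1}W_M(R)$; second, that $g_1$ and $\wt\kappa (g_1)$ agree on $t$ modulo $S^{p-1}W_M(\m _R)$ (since $\varepsilon ^{a_1}\equiv E(a_1,S)$ modulo the same ideal), which by an analog of Proposition \ref{P4.3}(b) applied to $\wt\kappa (g_2)(f)$ yields $g_1(\wt\kappa (g_2)(f))\equiv \wt\kappa (g_1)\wt\kappa (g_2)(f)\bmod S^{p-1}\c M_{R_0}$. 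Combining gives $(g_1g_2)(f)\equiv \wt\kappa (g_1)\wt\kappa (g_2)(f)\bmod S^{p-1}\c M_{R_0}$, and the strict action of $\c G_h$ on the orbit of $f$ modulo the error (as in the proof of Proposition \ref{P3.5}) forces $\kappa (g_1g_2)=\kappa (g_1)\kappa (g_2)$. Since $\c G_h$ has exponent $p^M$ and nilpotent class $<p$ by its very definition, $\kappa $ automatically descends to $\kappa _{<p}:\Gamma _{<p}\to\c G_h$.

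For (iii), on the rightmost quotients of the two short exact sequences $\kappa _{<p}$ induces $\tau _0\mapsto h$, an isomorphism of cyclic groups of order $p^M$. On the left, for $g\in\wt\Gamma =\c G$ the restriction $g|_{O(\c K)}$ is the identity (since $\wt\Gamma $ fixes each $\pi _n$, hence $t_0$), so the identity may be taken as the $h$-lift; then $g(f)=f\circ \eta _M(g)$ shows that $\kappa (g)$ is the class of $\eta _M(g)$, which by the proof of Proposition \ref{P3.5} coincides with the image of $\eta _M(g)$ under the natural map $G(\c L)\twoheadrightarrow G(\c L)/G(\c L(p))\hookrightarrow \c G_h$. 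Consequently $\kappa |_{\wt\Gamma }$ surjects onto the left term of the bottom sequence, giving a morphism of short exact sequences with surjective induced map on the leftmost terms.

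The main obstacle is the injectivity of the induced map $\op{Im}(i)\to G(\c L)/G(\c L(p))$, equivalent to the identification $\op{Ker}(i)\simeq G(\c L(p))$ via $\eta _M$: one must show that $\c G\cap \Gamma ^{p^M}C_p(\Gamma )$ corresponds precisely to $G(\c L(p))$. This is the central non-formal input, tying the filtration $\{\c L(s)\}$ (defined via the specific element $S$) to the ramification structure of $K$ transmitted through the field-of-norms identification and Fontaine's element $\varepsilon $. Once this identification is established, the five-lemma yields that $\kappa _{<p}$ is an isomorphism.
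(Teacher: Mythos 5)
Your proposal follows the paper's own proof quite closely. The multiplicativity of $\kappa $ is established in the paper by exactly the chain of congruences you describe: one writes $(\id _{\c L}\otimes \kappa (g))f=c\circ (A\otimes \id )f$ and then replaces the action of $g_1$ on $c$ and on $f$ by that of $\kappa (g_1)$ modulo $S^{p-1}\c M_{R_0}$, finishing with the strictness of the $\c G_h$-action on the orbit of $f\,\op{mod}\,S^{p-1}\c M_{R_0}$. Your two verification points --- that $g_1$ preserves $S^{p-1}\c M_{R_0}$, and that $g_1$ and its $h$-counterpart agree on $t$ modulo $S^{p-1}W_M(\m _R)$ so that an analogue of Proposition \ref{P4.3}(b) applies --- are precisely what legitimises those substitutions, so this part is correct and is the paper's argument with the implicit steps made explicit. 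The descent to $\Gamma _{<p}$ and the comparison of the two exact sequences (Proposition \ref{P4.1} on top, Proposition \ref{P3.5} below, with $\tau _0\mapsto h$ on the right and $\kappa (\wt{\Gamma })=G(\c L/\c L(p))$ on the left) are likewise exactly what the paper does.

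The one step you leave conditional --- injectivity of the induced map $\op{Im}(i)\To G(\c L)/G(\c L(p))$, equivalently that the preimage in $\wt{\Gamma }=\c G$ of $G(\c L(p))$ lies in $\Gamma ^{p^M}C_p(\Gamma )$ --- is indeed the only non-formal point, and you are right to isolate it: surjectivity of $\kappa _{<p}$ and the inclusion $\op{Ker}(i)\subseteq \op{Ker}(\kappa |_{\wt{\Gamma }})$ follow formally from the three facts you list, but the reverse inclusion does not. The paper itself compresses this into the single sentence ``Proposition \ref{P4.1} implies that $\kappa _{<p}$ is isomorphism,'' so your write-up is no less complete than the text at this point; the ingredients that actually close it are Theorem \ref{T3.3} and Proposition \ref{P3.5}, which identify $G(\c L(p))$ with $C_p(\wt{\c G}_h)=\c H\cap G(\c L)$, i.e.\ with the part of $G(\c L)$ stabilizing $\bar f$, so that the generators of $G(\c L(p))$ (the $p$-fold commutators of $\wt{\c G}_h$) can be transported, via the matching of the $\Gamma $- and $\wt{\c G}_h$-actions on $\bar f$, to $p$-fold commutators and $p^M$-th powers in $\Gamma $. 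As submitted, your proof is therefore a faithful reconstruction of the paper's argument with one acknowledged conditional step rather than a finished proof; to complete it you should spell out this last transfer rather than cite it as an assumption.
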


\begin{proof} Suppose $g_1,g\in\Gamma $. 
Let $c\in\c L_{\c K}$ and $A\in\Aut\,{\c L}$ be such that 
 $(\id _{\c L}\otimes \kappa (g))f=c\circ (A\otimes \id _{\c K_{<p}})f$. Then 
we have the following congruences modulo $S^{p-1}\c M_{R_0}$
$$(\id _{\c L}\otimes\kappa (g_1g))f\equiv (\id _{\c L}\otimes g_1g)f
\equiv (\id _{\c L}\otimes g_1)(\id _{\c L}\otimes g)f\equiv $$
$$(\id _{\c L}\otimes g_1)(\id _{\c L}\otimes\kappa (g))f\equiv 
(\id _{\c L}\otimes g_1)(c\circ (A\otimes \id _{\c K_{<p}})f)\equiv $$
$$(\id _{\c L}\otimes g_1)c\circ (A\otimes g_1)f
\equiv (\id _{\c L}\otimes \kappa (g_1))c\circ (A\otimes\kappa (g_1))f\equiv $$
$$(\id _{\c L}\otimes \kappa (g_1))(c\circ (A\otimes  \id _{\c K_{<p}})f)\equiv 
(\id _{\c L}\otimes \kappa (g_1))(\id _{\c L}\otimes\kappa (g))f$$
$$\equiv (\id _{\c L}\otimes \kappa (g_1)\kappa (g))f$$
and, therefore, $\kappa (g_1g)=\kappa (g_1)\kappa (g)$ 
(use that $\c G_h$ acts strictly on the orbit of $f$).  

Therefore, 
$\kappa $ factors through the natural projection $\Gamma \to\Gamma _{<p}$  
and defines the group homomorphism $\kappa _{<p}:\Gamma _{<p}\to\c G_h$. 

Recall that we have the field-of-norms identification 
$\wt{\Gamma }=\c G$ and, therefore, $\kappa _{<p}$ identifies   
the groups $\kappa (\wt{\Gamma })$ and $G(\c L/\c L(p))\subset\c G_h$. 
Besides, $\kappa $ induces a group isomorphism of 
$\langle \tau _0\rangle ^{\Z /p^M}$ and $\langle h_0\rangle ^{\Z /p^M}$. 
Now Proposition \ref{P4.1} 
implies that $\kappa _{<p}$ is isomorphism. 
\end{proof}

Under the isomorphism $\kappa _{<p}$, 
the subfields $\c K[s,M]\subset \c K_{<p}$, where $1\leqslant s<p$ (cf. Subsection \ref{S3.5}),  
give rise to the subfields $K[s,M]\subset K_{<p}$ such that 
$\op{Gal }(K[s,M]/K)=\Gamma /\Gamma ^{p^M}C_{s+1}(\Gamma )$. In other words, 
the extensions $K[s,M]$ appear as the maximal $p$-extensions of 
$K$ with the Galois group of period $p^M$ and nilpotent class $s$. 

Using that the identification $\c G=\wt{\Gamma }$ is compatible 
with ramification filtrations, cf. Subsection \ref{S4.2}, we obtain the following 
result about the maximal upper ramification numbers of the field extensions $K[s,M]/K$, 
where $M\in\N $ and $1\leqslant s<p$. 

\begin{Thm} \label{Th4.4} If $[K:\Q _p]<\infty $, $e_K$ is the ramification index of 
$K$ and $\zeta _M\in K$ then for $1\leqslant s<p$, 
$$v(K[s,M]/K)=e_K\left (M+\frac{s}{p-1}\right )-\frac{1-\delta _{1s}}{p}\,.$$
\end{Thm}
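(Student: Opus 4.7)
The plan is to combine the characteristic-$p$ ramification estimate of Proposition~\ref{P3.8} (which gives $v(\c K[s,M]/\c K)=p^{M-1}(e^*s-1)$) with the isomorphism $\kappa _{<p}:\Gamma _{<p}\simeq\c G_h$ of Proposition~\ref{P6.4} and the field-of-norms compatibility $\c G^{(v)}=\Gamma ^{(\varphi _{\wt K/K}(v))}\cap\wt\Gamma $ from Subsection~\ref{S4.2}. Under $\kappa _{<p}$ the quotient $\op{Gal}(K[s,M]/K)=\Gamma _{<p}/C_{s+1}(\Gamma _{<p})$ is identified with $\c G_h/C_{s+1}(\c G_h)$, and Theorem~\ref{T3.3} together with Proposition~\ref{P3.5} places this into a short exact sequence
\[ 1\To G(\c L/\c L(s+1))\To \c G_h/C_{s+1}(\c G_h)\To \langle h\rangle\,\op{mod}\,\langle h^{p^M}\rangle\To 1. \]
By Proposition~\ref{P4.1} the right-hand quotient corresponds to $\op{Gal}(K(\pi _M)/K)=\langle\tau _0\rangle ^{\Z/p^M}$, and since it is abelian we have $C_{s+1}(\Gamma _{<p})\subset i(\wt\Gamma )$ for every $s\geqslant 1$.

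First I would compute $v^*:=v(K(\pi _M)/K)$ directly. The hypothesis $\zeta _M\in K$ gives $v_K(\zeta _M-1)=e_K/(p^{M-1}(p-1))$, whence $v_{K(\pi _M)}(\tau _0^j(\pi _M)-\pi _M)=1+p^{v_p(j)}e^*$, so the lower ramification breaks of the cyclic $p^M$-extension $K(\pi _M)/K$ are $e^*,pe^*,\ldots ,p^{M-1}e^*$. Conversion to upper numbering yields $v^*=e^*((p-1)M+1)/p=e_K(M+1/(p-1))$. For $v_1\leqslant v^*$ the subgroup $\Gamma ^{(v_1)}$ acts nontrivially on $K(\pi _M)\subset K[s,M]$, hence $v(K[s,M]/K)\geqslant v^*$ for every $s\geqslant 1$.

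Next, for $v_1>v^*$ the group $\Gamma ^{(v_1)}$ fixes $K(\pi _M)$, so its image in $\Gamma _{<p}$ lies in $i(\wt\Gamma )\simeq G(\c L/\c L(p))$. Under the field-of-norms identification $\wt\Gamma =\c G$, the image of $\wt\Gamma \cap\Gamma ^{(v_1)}=\c G^{(\psi _{\wt K/K}(v_1))}$ in $G(\c L/\c L(p))$ is that of $\c L^{(\psi _{\wt K/K}(v_1))}$ modulo $\c L(p)$, so the condition $\Gamma ^{(v_1)}_{<p}\subset C_{s+1}(\Gamma _{<p})$ becomes $\c L^{(\psi _{\wt K/K}(v_1))}\subset \c L(s+1)$, equivalent by Proposition~\ref{P3.8} to $v_1>\varphi _{\wt K/K}(p^{M-1}(e^*s-1))$. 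The explicit value then comes from a slope computation: the APF extension $\wt K/K$ has its first $M$ lower breaks at the $p^{k-1}e^*$ found above, and $\varphi _{\wt K/K}$ has slope $1/p^k$ between the $k$-th and $(k+1)$-th lower breaks. For $2\leqslant s<p$ the point $v_L=p^{M-1}(e^*s-1)$ lies strictly between $p^{M-1}e^*$ and $p^Me^*$, hence on the slope-$1/p^M$ segment emerging from $v^*$; arithmetic simplification yields $\varphi _{\wt K/K}(v_L)=v^*+(e^*(s-1)-1)/p=e_K(M+s/(p-1))-1/p$, which exceeds $v^*$ and is therefore the maximal ramification number, matching the formula with $\delta _{1s}=0$. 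For $s=1$ the same recipe places $v_L$ on the preceding segment and gives $v^*-1$, so the $\langle\tau _0\rangle $-contribution $v^*$ wins, giving the formula with $\delta _{1s}=1$.

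The main obstacle is justifying the identification used in the third paragraph, namely that for $v_1>v^*$ the image of $\Gamma ^{(v_1)}$ in $\Gamma _{<p}$ agrees with the field-of-norms image of $\Gamma ^{(v_1)}\cap\wt\Gamma $. Since $\wt\Gamma $ is not normal in $\Gamma $ this is not automatic; it rests on the fact that $\Gamma ^{(v_1)}\subset\op{Gal}(\bar K/K(\pi _M))$ once $v_1>v^*$, together with the observation that modulo $\Gamma ^{p^M}C_p(\Gamma )$ any element of $\Gamma ^{(v_1)}$ can be represented by an element of $\wt\Gamma $ without altering its image in $\c G_h/C_{s+1}(\c G_h)$. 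Once this compatibility is in place, the remainder of the proof is a routine slope-and-break computation on the Herbrand function.
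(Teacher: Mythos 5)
Your proposal follows the same route as the paper: reduce to Proposition \ref{P3.8} via the field-of-norms compatibility and the isomorphism $\kappa _{<p}$, observe that $v(K[s,M]/K)=\max\{v(K(\pi _M)/K),\varphi _{\wt K/K}(p^{M-1}(se^*-1))\}$, and evaluate the Herbrand function on the relevant slope segments; your break/slope arithmetic agrees with the paper's. The only difference is one of detail --- the paper simply reads off $v(K(\pi _M)/K)=\varphi _{\wt K/K}(p^{M-1}e^*)=e^*+e_K(M-1)$ and asserts the max formula directly from Proposition \ref{P3.8}, whereas you recompute the lower breaks of $K(\pi _M)/K$ from scratch and spell out the two-case analysis (including the point about $\wt\Gamma $ not being normal) that the paper leaves implicit.
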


\begin{proof} Note first, that the Herbrand function 
$\varphi _{\wt{K}/K}(x)$ is continuous for all $x\geqslant 0$, $\varphi _{\wt{K}/K}(0)=0$ 
and its derivative $\varphi '_{\wt{K}/K}$ equals 1 if $x\in (0,e^*)$ 
and equals $p^{-m}$, if $m\in\N $ and $x\in (e^*p^{m-1}, e^*p^m)$. 

From Proposition \ref{P3.8} we obtain that 
$$v(K[s,M]/K)=\max\left\{ v(K(\pi _M)/K), 
\varphi _{\wt{K}/K}(p^{M-1}(se^*-1))\right\}\,.$$ 

Note that $v(K(\pi _M)/K)=\varphi _{\wt{K}/K}(p^{M-1}e^*)=
e^*+e_K(M-1)$ and, therefore,   
$$v(K[1,M]/K)=v(K(\pi _M)/K)=
e_K\left (M+\frac{1}{p-1}\right )\,.$$

If $2\leqslant s<p$ then $v(K[s,M/K)$ equals 
$\varphi _{\wt{K}/K}(p^{M-1}(se^*-1))=$
$$=\varphi _{\wt{K}/K}(p^{M-1}e^*)+
\frac{p^{M-1}(se^*-1)-p^{M-1}e^*}{p^M}=e_K\left (M+\frac{s}{p-1}\right )-\frac{1}{p}\,.$$

\end{proof}

\end{document}